\newcommand{\om}{\omega}
\newtheorem{otherth}{\bf Theorem}
\newtheorem{otherl}{\bf Lemma}
\newtheorem{theorem}{Theorem}[section]
\newtheorem{lemma}[theorem]{Lemma}
\newtheorem{proposition}[theorem]{Proposition}
\theoremstyle{definition}
\newtheorem{definition}[theorem]{Definition}
\DeclareMathOperator*{\esssup}{ess\,sup}
\newcommand{\C}{\mathbb{C}}
\newcommand{\D}{\mathbb{D}}
\newcommand{\R}{\mathbb{R}}
\numberwithin{figure}{section}
\begin{document}

\title[Composition and Volterra type operators on large Bergman spaces]{Composition and Volterra type operators on large Bergman spaces with rapidly decreasing weights}

\author[M. Almoka]{Madhawi Almoka}
\address{Department of Mathematics, University of Hail, Saudi Arabia
\newline \indent Department of Mathematics and Statistics, University of Reading, England}
\email{m.almooka@uoh.edu.sa, m.m.a.almoka@pgr.reading.ac.uk}

\author[H. Arroussi]{Hicham Arroussi}
\address{Department of Mathematics and Statistics, University of Helsinki, Finland
\newline \indent Department of Mathematics and Statistics, University of Reading, England}

\email{arroussihicham@yahoo.fr, h.arroussi@reading.ac.uk}

\author[J. Virtanen]{Jani Virtanen}
\address{Department of Physics and Mathematics, University of Eastern Finland\newline \indent Department of Mathematics and Statistics, University of Reading, England}
\email{jani.virtanen@uef.fi, j.a.virtanen@reading.ac.uk}

\thanks{H. Arroussi was supported by the Horizon 2022 research and innovation programme of the European Union under the Marie Skłodowska-Curie Grant no.\,101109510. J. Virtanen was supported in part by Engineering and Physical Sciences Research Council (EPSRC) grant EP/Y008375/1.}

\begin{abstract}
We characterize boundedness, compactness and Schatten class properties of generalized Volterra-type integral operators acting between large Bergman spaces $A_\omega^p$ and $A_\omega^q$ for $0 <p, q\leq \infty$. 
To prove our characterizations, which involve Berezin-type integral transforms, we use the Littlewood-Paley formula of Constantin and Peláez and corresponding embedding theorems. Our results generalize the work on integration operators of Pau and Pel\'{a}ez in J. Funct. Anal. 259 (2010), 2727--2756.
\end{abstract}

\maketitle

\section{Introduction and main results }\label{Section 1}
Denote by $H(\D)$ the space of all analytic functions on the open unit disk $\D$ and by $dA$ the normalized area measure on $\mathbb{D}$. For $0< p < \infty$ and a positive function $\omega\in L^1(\D, dA)$,
the weighted Bergman space $A^p_{\omega}$ consists of those functions $f\in H(\D)$ for which
\[
\|f \|^p_{L^p_{\omega}} =\int_{\D}|f(z)|^p  \,\omega(z)^{p/2}\, dA(z) < \infty,
\]
and we set $A^{\infty}_\omega =  L^{\infty}(\omega^{1/2})\cap H(\mathbb{D})$, where
$\|f\|_{L^{\infty}(\omega^{1/2})} = \esssup_{z\in\D}\, |f(z)|\, \omega(z)^{1/2} < \infty.$

This paper is concerned with boundedness, compactness, and Schatten class membership of generalized Volterra-type integral operators, defined for analytic functions $\psi:\D\to\D$ and $g:\D\to\C$, by setting
\begin{equation}\label{operators}
   C_{(\psi,g)}f(z)= \int_{0}^{\psi(z)}f^\prime(\xi)\,g(\xi)\, d\xi 
\quad{\rm and}\quad
  C_{g}^{\psi}f(z)= \int_{0}^{\psi(z)} f(\xi)\,g(\xi)\, d\xi,  
\end{equation}
acting between $A^p_\omega$ and $A^q_\omega$ for $\omega$ in the class $\mathcal{W}$ that consists of the radial decreasing weights of the form $\omega(z)=e^{-2\varphi(z)}$, where 
$\varphi\in C^2(\D)$ is a radial function such that $\left(\Delta\varphi(z)\right)^{-1/2}
\asymp \tau(z)$ for some radial positive function $\tau(z) \in C^1(\D)$ that decreases to zero as $|z|\to 1^-$ and satisfies $\lim_{r\to 1^-} \tau'(r)=0$, and, in addition, we assume that there either exists a constant $C>0$ such that $\tau(r)(1-r)^{-C}$ increases for $r$ close to $1$ or if $\tau'(r)\log \frac{1}{\tau(r)}\rightarrow 0$ as $r\to 1^-$. The class $\mathcal{W}$ was introduced in \cite{BDK} in connection with sampling and interpolation. See also Section 7 of \cite{PP1} for several examples of weights in $\mathcal{W}$.

If $\psi(z)=z$, we denote the operators in \eqref{operators} by $I_g$ and $J_g$, respectively. Previously, Dostani\'c~\cite{D2} characterized boundedness and compactness of $J_g: A^2_{\om_\alpha}\to A^2_{\om_\alpha}$ with the prototypical weights $\omega_{\alpha}(z) = \exp(-b(1-|z|^2)^{-\alpha})$ in $\mathcal{W}$, where $b,\alpha>0$. Subsequently, Pau and Peláez \cite{PP1} extended Dostani\'c's results to all weights $w\in \mathcal{W}$ when $J_g$ acts from $A^p_\om$ to $A^q_\om$ for all $0<p,q\le \infty$. In the present work, we verify that the previous  characterizations agree with our results when $\psi(z)=z$. Further, we note that our results on $C_{\psi,g}$ are new even for $I_g$. For analogous results in the setting of standard Fock spaces $F^p_\alpha = H(\C)\cap L^p(\C, e^{-\alpha p |z|^2}dA)$ with $\alpha>0$, see the work of Mengestie~\cite{Tesfa, Tesfa00, Tesfa1}.

The operators $C_{\psi, g}$ and $C^g_\psi$ are closely related to the operators
\begin{equation}\label{operators2}
    GI_{(\psi,g)}f(z)=\int_{0}^{z} f^\prime(\psi(\xi))\,g(\xi)d\xi \quad {\rm and}\quad
    GV_{(\psi,g)}f(z)=\int_{0}^z f(\psi(\xi))\,g(\xi)d\xi,
\end{equation}
whose boundedness and compactness were recently studied in \cite{HGJ}. In addition to boundedness and compactness, we also characterize the Schatten class membership of $C^\psi_g$ and $GV_{\psi, g}$ while the case of the other two operators is currently out of our reach. 

Regarding terminology, the operators in $GI_{\psi, g}$ and $C_{\psi, g}$ are often called generalized Volterra companion operators because the particular choice $\psi(z) = z$ reduces them both to the Volterra companion operator $I_g$. They can also be thought of as generalized composition operators because the operators $GI_{\psi,g}$ and $C_{\psi,g}$ become composition operators $C_\psi$ (up to constants) when $g = \psi^\prime$ and $g = 1$, respectively.

\subsection{Main results}
When $0<p\le q<\infty$ or $0<q<p\le \infty$, our characterizations for boundedness and compactness of $C_{\psi, g}, C^\psi_g: A^p_\om \to A^q_\om$ involve the integral transform
\[M_{n,p,q}^{\psi} (g)(z) = \int_\mathbb{D} |k_{p,z} (\psi(\xi)|^q \,|g(\psi(\xi))|^q \,|\psi^\prime (\xi)|^q \,\frac{(1+\varphi^\prime (\psi(\xi))^{nq}}{(1+\varphi^\prime (\xi))^q} \omega(\xi)^{q/2}\,dA(\xi), \quad z\in \mathbb{D},\]
where $k_{p,z}= K_z/\|K_z\|_{A^p_{\om}}$ 
is defined via the reproducing kernel $K_z$ of $A^2_\om$. When $0<p\le q=\infty$, our characterizations are given in terms of
\begin{equation}\label{bdd 2nd case}
    N^{\psi, t}_{n,p, \infty}(g)(z)= \frac{|g(\psi(z))||\psi^\prime (z)|}{(1+\varphi^\prime (z))} (1+\varphi^\prime (\psi(z)))^n \frac{\omega(z)^\frac{1}{2}}{\omega(\psi(z))^\frac{1}{2}} \triangle\varphi(\psi(z))^t,
\end{equation}
where $t=\frac{1}{p}$ if $p<\infty$ and $t=0$ if $p=\infty$.

To state our main results, we write $B(X,Y)$ for bounded operators from $X$ to $Y$ and $K(X,Y)$ for compact operators.

\begin{theorem}\label{Theorem 1.1}
Let $\omega \in \mathcal{W}, \psi:\D\to \D$ be an analytic, and $g\in H(\D)$.

{\rm (A)} For $0<p\leq q<\infty$, 
$$
    C_{\psi,g}\in B(A^p_\om, A^q_\om) \iff M_{1,p,q}^\psi (g)\in L^\infty\  {\rm and}\ C_{\psi,g}\in K(A^p_\om, A^q_\om) \iff \lim_{|z|\to 1} M_{1,p,q}^\psi (g)(z) =0
$$
and
$$
    C^\psi_g\in B(A^p_\om, A^q_\om) \iff M_{0,p,q}^\psi (g)\in L^\infty\  {\rm and}\ C^\psi_g\in K(A^p_\om, A^q_\om) \iff \lim_{|z|\to 1} M_{0,p,q}^\psi (g)(z) =0.
$$

{\rm (B)} For $0<q<p\leq\infty$,
$$
    C_{\psi,g} \in B(A_\omega^p, A_\omega^q) \iff C_{\psi,g} \in K(A_\omega^p, A_\omega^q) \iff M_{1,p,q}^\psi (g)\in L^{s}(d\lambda)
$$
and
$$
    C^\psi_g \in B(A_\omega^p, A_\omega^q) \iff C^\psi_g \in K(A_\omega^p, A_\omega^q) \iff M_{0,p,q}^\psi (g)\in L^{s}(d\lambda)
$$
where $d\lambda(z)=dA(z)/\tau(z)^2$, $s=p/(p-q)$ if $p<\infty$, and $s=1$ if $p=\infty$.

{\rm (C)} For $0<p\le \infty$,
$$
    C_{\psi, g} \in B(A^p_\om, A^\infty_\om) \iff
    N^{\psi,t}_{1,p,\infty}(g) \in L^\infty\ {\rm and}\
    C_{\psi, g} \in K(A^p_\om, A^\infty_\om) \iff
    \lim_{|\psi(z)|\to 1} N^{\psi,t}_{1,p,\infty}(g)(z) \to 0
$$
and
$$
    C^\psi_g \in B(A^p_\om, A^\infty_\om) \iff
    N^{\psi,t}_{0,p,\infty}(g) \in L^\infty\ {\rm and}\
    C^\psi_g \in K(A^p_\om, A^\infty_\om) \iff
    \lim_{|\psi(z)|\to 1} N^{\psi,t}_{0,p,\infty}(g)(z) \to 0,
$$
where $t=\frac{1}{p}$ if $p<\infty$ and $t=0$ if $p=\infty$.
\end{theorem}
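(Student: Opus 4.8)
The plan is to reduce every statement to a Carleson-type embedding for the spaces $A^p_\om$ and then translate the resulting geometric condition into the explicit transforms $M^\psi_{n,p,q}(g)$ and $N^{\psi,t}_{n,p,\infty}(g)$. The starting point is the Littlewood--Paley identity of Constantin and Pel\'aez, which for $0<q<\infty$ gives $\|F\|^q_{A^q_\om}\asymp |F(0)|^q+\int_\D |F'(z)|^q\,\omega(z)^{q/2}(1+\varphi'(z))^{-q}\,dA(z)$, and for $q=\infty$ gives $\|F\|_{A^\infty_\om}\asymp |F(0)|+\sup_{z}|F'(z)|\,\omega(z)^{1/2}(1+\varphi'(z))^{-1}$. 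Differentiating the operators under the integral sign yields $(C_{\psi,g}f)'(z)=f'(\psi(z))\,g(\psi(z))\,\psi'(z)$ and $(C^\psi_g f)'(z)=f(\psi(z))\,g(\psi(z))\,\psi'(z)$, so applying the identity turns each operator norm into an integral of $|f(\psi(z))|^q$ (for $C^\psi_g$) or $|f'(\psi(z))|^q$ (for $C_{\psi,g}$) against the measure $d\mu_g(z)=|g(\psi(z))|^q|\psi'(z)|^q\omega(z)^{q/2}(1+\varphi'(z))^{-q}\,dA(z)$; the value at the origin is a fixed functional dominated by $\|f\|_{A^p_\om}$ and causes no trouble.

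For the companion operator $C^\psi_g$ I would push $\mu_g$ forward under $\psi$ and note that $C^\psi_g\in B(A^p_\om,A^q_\om)$ is equivalent to boundedness of the embedding $A^p_\om\hookrightarrow L^q(\psi_*\mu_g)$. In the range $0<p\le q<\infty$ the embedding theorem characterizes boundedness by uniform boundedness of the Berezin-type transform $z\mapsto\int_\D|k_{p,z}(w)|^q\,d(\psi_*\mu_g)(w)$, and unwinding the pushforward identifies this quantity with $M^\psi_{0,p,q}(g)(z)$; compactness corresponds to its vanishing as $|z|\to 1$. Parts (B) and (C) for $C^\psi_g$ use the same reduction with the other embedding theorems: for $0<q<p\le\infty$ the embedding is bounded, equivalently compact, exactly when the averaging/Berezin function lies in $L^s(d\lambda)$, which after the pushforward is the stated membership of $M^\psi_{0,p,q}(g)$ in $L^s(d\lambda)$; for $q=\infty$ one instead uses the sharp pointwise estimate $|f(w)|\,\omega(w)^{1/2}\lesssim\|f\|_{A^p_\om}\,\Delta\varphi(w)^{1/p}$ (with $t=0$ when $p=\infty$) together with the $A^\infty_\om$ Littlewood--Paley formula to obtain the sup condition $N^{\psi,0}_{0,p,\infty}(g)\in L^\infty$ and its vanishing for compactness.

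The case of $C_{\psi,g}$ is the same scheme applied to $f'$ rather than $f$, and this is where the extra factor $(1+\varphi')^{q}$, i.e. $n=1$, enters. I would handle it by passing to the auxiliary weight $\widetilde\omega=e^{-2\widetilde\varphi}$ with $\widetilde\varphi=\varphi+\log(1+\varphi')$, so $\widetilde\omega=\omega\,(1+\varphi')^{-2}$; since $\log(1+\varphi')$ is a lower-order perturbation one checks $\Delta\widetilde\varphi\asymp\Delta\varphi\asymp\tau^{-2}$, whence $\widetilde\omega\in\mathcal W$ with the same $\tau$, and the Littlewood--Paley identity shows $f\mapsto f'$ maps $A^p_\om$ (modulo constants) isomorphically onto $A^p_{\widetilde\omega}$. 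Thus $C_{\psi,g}$ reduces to an embedding of $A^p_{\widetilde\omega}$, whose reproducing kernel satisfies $\|\widetilde K_z\|^2_{A^2_{\widetilde\omega}}\asymp(1+\varphi'(z))^2\,\tau(z)^{-2}\omega(z)^{-1}$ and whose normalized kernels relate to $k_{p,z}$ through powers of $(1+\varphi')$; feeding this into the three embedding theorems reproduces $M^\psi_{1,p,q}(g)$ in (A) and (B) and $N^{\psi,t}_{1,p,\infty}(g)$ in (C), the derivative version of the pointwise estimate supplying the single factor $1+\varphi'(\psi(z))$. For necessity throughout I would test the operators on the normalized kernels $k_{p,z}$ or $\widetilde k_{p,z}$, using their concentration near $z$ and the kernel estimates available for weights in $\mathcal W$ to recover the pointwise lower bounds.

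The main obstacle, and the part requiring the most care, is precisely the derivative operator $C_{\psi,g}$: verifying that $\widetilde\omega\in\mathcal W$ and that $f\mapsto f'$ is a genuine isomorphism onto $A^p_{\widetilde\omega}$ with matching reproducing-kernel asymptotics, so that the bookkeeping of $(1+\varphi')$-powers yields exactly the exponent $nq$ appearing in $M^\psi_{n,p,q}(g)$. A secondary difficulty is the $q=\infty$ endpoint, where the embedding theorems are unavailable and one must instead prove and apply the sharp two-sided pointwise estimates for $f$ and $f'$ (encoding the factor $\Delta\varphi(\psi(z))^t$), and confirm that the compactness criterion is governed by $|\psi(z)|\to 1$ rather than $|z|\to 1$.
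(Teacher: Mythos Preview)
Your treatment of $C^\psi_g$---pushing the measure forward by $\psi$ and invoking the Carleson/Berezin characterizations of the embedding $A^p_\om\hookrightarrow L^q(\psi_*\mu_g)$---is precisely what the paper does, so that half of the argument is fine.

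The gap is in your handling of $C_{\psi,g}$. You want to pass to the auxiliary weight $\widetilde\omega=\omega(1+\varphi')^{-2}$, assert that $\widetilde\omega\in\mathcal W$ with the same $\tau$, and then recycle the $A^p_{\widetilde\omega}$ kernel estimates and embedding theorems. But weights in $\mathcal W$ are only assumed to have $\varphi\in C^2(\D)$, so $\widetilde\varphi=\varphi+\log(1+\varphi')$ is in general merely $C^1$ and $\Delta\widetilde\varphi$ is not even defined; even if one grants extra smoothness, the claim $\Delta\widetilde\varphi\asymp\Delta\varphi$ needs control on $\varphi'''$ that the class $\mathcal W$ does not supply. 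Consequently the reproducing-kernel asymptotics for $\widetilde K_z$ that you rely on are not available, and the reduction ``$C_{\psi,g}$ on $A^p_\om$ $=$ $C^\psi_g$-type problem on $A^p_{\widetilde\omega}$'' cannot be closed as stated.

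The paper follows the same intuition but avoids this obstruction. Instead of claiming $\widetilde\omega\in\mathcal W$, it works directly with the space $S^p_\omega=\{f\in H(\D):\int_\D|f|^p\,\omega^{p/2}(1+\varphi')^{-p}\,dA<\infty\}$---which is exactly your $A^p_{\widetilde\omega}$---and quotes embedding theorems $S^p_\omega\hookrightarrow L^q(d\mu)$ that were proved separately (in the companion paper of Arroussi--Gissy--Virtanen). Those embeddings are obtained not from kernel machinery for $\widetilde\omega$ but from the weighted sub-mean-value inequality $|f(z)|^p\omega(z)^{\beta}(1+\varphi'(z))^{-\gamma}\lesssim\tau(z)^{-2}\int_{D_\delta(z)}|f|^p\omega^{\beta}(1+\varphi')^{-\gamma}\,dA$, which holds because $\varphi'(a)\asymp\varphi'(\zeta)$ for $\zeta\in D_\delta(a)$ and requires nothing beyond $\varphi\in C^2$. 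For necessity in part (C) the paper does not test on (modified) reproducing kernels either; it uses the Borichev--Dhuez--Kellay test functions $F_{a,n}$ together with the derivative estimate $|F'_{a,n}(z)|\,\omega(z)^{1/2}\asymp 1+\varphi'(z)$ on $D_\delta(a)$, which produces the factor $1+\varphi'(\psi(z))$ in $N^{\psi,t}_{1,p,\infty}(g)$ directly and again sidesteps any need for $\widetilde\omega\in\mathcal W$.
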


Our next main result determines when two operators $C^\psi_g$ and $GV_{\psi, g}$ belong to the Schatten $p$-class $S^p(A^2_\om)$ for every $0<p<\infty$.

\begin{theorem}\label{Theorem 1.2}
Let $0<p<\infty$, $\omega \in \mathcal{W}$, $\psi:\D\to \D$ be an analytic, and $g\in H(\D)$. Then
$$
    C^\psi_g\in S_p(A^2_\om) \iff
    M_{0,2,2}^{\psi}(g) \in L^{p/2}(d\lambda)
$$
and
$$ 
    GV_{\psi, g}\in S_p(A^2_\om)
    \iff 
z\mapsto \int_\mathbb{D} |k_{z}(\psi(\xi))|^2 \frac{|g(\xi)|^2\omega(\xi)}{(1+\varphi^\prime(\xi))^q} dA(\xi) \in L^{p/2}(d\lambda),
$$
where $d\lambda(z) = dA(z)/\tau(z)^2$.
\end{theorem}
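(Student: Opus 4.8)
The plan is to reduce both Schatten-class statements to a single Toeplitz/embedding problem by means of the Constantin--Peláez Littlewood--Paley description of $A^2_\omega$, and then to invoke a Luecking-type criterion for Toeplitz operators induced by positive measures. Throughout I would use the equivalence $(1+\varphi'(z))^{-1}\asymp\tau(z)$ established in the cited work, so that the derivative weight $(1+\varphi')^{-2}$ and $\tau^2$ are interchangeable, and I would keep in mind that adding a finite-rank operator does not affect membership in $S_p$.

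First I would differentiate the two operators. Since $(C^\psi_g f)'(z)=f(\psi(z))\,g(\psi(z))\,\psi'(z)$ and $(GV_{\psi,g}f)'(z)=f(\psi(z))\,g(z)$, the Littlewood--Paley formula $\|h\|_{A^2_\omega}^2\asymp|h(0)|^2+\int_\D|h'|^2\,\tau^2\,\omega\,dA$ shows that, modulo the rank-one contribution of the point evaluation at $0$, the norm $\|C^\psi_g f\|_{A^2_\omega}$ is comparable to the $L^2(\tau^2\omega\,dA)$ norm of the weighted composition $W_C f=(f\circ\psi)\,(g\circ\psi)\,\psi'$, and likewise $\|GV_{\psi,g}f\|_{A^2_\omega}$ is comparable to that of $W_G f=(f\circ\psi)\,g$. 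Because $\|Af\|\asymp\|Bf\|$ for all $f$ forces $A^*A\asymp B^*B$ and hence $s_n(A)\asymp s_n(B)$, this pointwise comparison gives $C^\psi_g\in S_p$ iff $W_C\in S_p$, and $GV_{\psi,g}\in S_p$ iff $W_G\in S_p$, where $W_C,W_G\colon A^2_\omega\to L^2(\tau^2\omega\,dA)$.

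Next I would pass to the positive operators via $W\in S_p \iff W^*W\in S_{p/2}$ and compute the Gram operators. A direct calculation of the quadratic form shows that $W_C^*W_C$ is the Toeplitz operator $T_\mu$ on $A^2_\omega$ with $\mu=\psi_*\nu_C$, where $d\nu_C=|g\circ\psi|^2\,|\psi'|^2\,\tau^2\,\omega\,dA$, and $W_G^*W_G=T_{\psi_*\nu_G}$ with $d\nu_G=|g|^2\,\tau^2\,\omega\,dA$. The Berezin transform of the pushforward is then
\[
\widetilde{\psi_*\nu}(z)=\int_\D|k_{2,z}(w)|^2\,d(\psi_*\nu)(w)=\int_\D|k_{2,z}(\psi(\xi))|^2\,d\nu(\xi),
\]
which for $\nu_C$ is exactly $M^\psi_{0,2,2}(g)(z)$ and for $\nu_G$ is exactly the integral transform displayed in the statement, after replacing $\tau^2$ by $(1+\varphi')^{-2}$. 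Thus both conclusions follow once one knows that $T_{\psi_*\nu}\in S_{p/2}$ iff its Berezin transform lies in $L^{p/2}(d\lambda)$.

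The main obstacle is establishing this last Toeplitz criterion in the full range $0<p<\infty$, i.e. for the Toeplitz exponent $p/2$ which may be less than $1$. For $p/2\ge 1$ the Berezin-transform condition follows from testing on the normalized kernels $k_{2,z}$ together with an interpolation argument between the extreme cases. For $0<p/2<1$ the Berezin transform is too coarse, and one must discretize over a lattice $\{z_j\}$ adapted to the metric $d\lambda=dA/\tau^2$, pass between the lattice sum $\sum_j (\mu(D(z_j,r))/\tau(z_j)^2)^{p/2}$, the averaged function $z\mapsto\mu(D(z,r))/\tau(z)^2$, and the $L^{p/2}(d\lambda)$ norm of the Berezin transform, controlling both the bounded overlap of the covering disks and the off-diagonal decay of $|k_{2,z}(w)|$ in the Bergman metric. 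A secondary but routine point is that $\psi$ need not be injective, so $\psi_*\nu$ carries multiplicity; this is harmless because $\psi_*\nu$ is a well-defined positive measure and all estimates are phrased through its Berezin transform and averaging function.
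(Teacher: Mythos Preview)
Your strategy is exactly the one the paper uses: compute the quadratic form of $(C^\psi_g)^*C^\psi_g$ via the Littlewood--Paley formula, recognize it as a Toeplitz operator $T_{m_{\psi,2}}$ on $A^2_\omega$ induced by the pushforward measure, and then invoke the known Schatten criterion for such Toeplitz operators (the paper cites Theorem~4.6 of \cite{APJ} for $T_\mu\in S_{p/2}\iff\widehat\mu\in L^{p/2}(d\lambda)$ and Lemma~7.1 of \cite{Hi-w} to pass to the Berezin transform), so your discretization discussion for $p/2<1$ is redundant rather than an obstacle.

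One slip to correct: the equivalence $(1+\varphi')^{-1}\asymp\tau$ you invoke is \emph{not} valid for general $\omega\in\mathcal W$ (for $\varphi(r)=b(1-r)^{-\alpha}$ one has $\tau\asymp(1-r)^{(\alpha+2)/2}$ but $(1+\varphi')^{-1}\asymp(1-r)^{\alpha+1}$). The Littlewood--Paley weight is $(1+\varphi')^{-2}$, not $\tau^2$; simply keep $(1+\varphi')^{-2}$ in your measures $\nu_C,\nu_G$ and the Berezin transforms come out to be precisely $M^\psi_{0,2,2}(g)$ and the displayed integral, with no conversion needed.
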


\subsection{Outline} 
In Section \ref{Section 2}, we state various estimates for the reproducing kernel $K_z$, which play an important role in our work, recall useful test functions that were used in \cite{PP1} to treat the operators $J_g$ and geometric characterizations of Carleson measures, and also discuss embedding theorems and the basic theory of Schatten class operators.

Section \ref{Section 3} deals with boundedness and compactness. In particular, we prove Theorem \ref{Theorem 1.1}, provide simpler necessary conditions, and show that our characterizations for boundedness and compactness agree with those of Pau and Pel\`aez \cite{PP1}. Finally, in Section \ref{Section 4}, we prove Theorem \ref{Theorem 1.2} and again show that it agrees with the characterizations of Pau and Pel\`aez~\cite{PP1} for $J_g$ to be in $S_p(A^2_\om)$.

\section{Preliminaries}\label{Section 2}

Throughout our work, we first need to use several times generalizations of   Carleson measure for $A_\omega^p$ in  \cite{Hi-w} and the following Littlewood-Paley type formulas  \cite{Con&Pel}:
\begin{align}\label{eqnorm}
    \|f\|_{A_\omega ^p}^p &\asymp |f(0)|+\int_\mathbb{D} |f^\prime (z)|^p \frac{\omega(z)^{p/2}}{(1+\varphi^\prime(z))^p}dA(z)
\end{align}
\text{and,}
\begin{align}\label{eqnorm1}
    \|f\|_{A_\omega^\infty} &\asymp|f(0)|+\sup_{z\in\mathbb{D}} |f^\prime(z)| \frac{\omega(z)^{1/2}}{(1+\varphi^\prime(z))}.
\end{align}
Moreover, we need to consider the pullback measure
\begin{equation}\label{eq:pullback}
    \mu_{\psi, \omega,g} (E)=\int_{\psi^-1 (E)} |g(\psi(\xi))|^q\,|\psi^\prime (z)|^q\, \frac{\omega (z)^\frac{q}{2}}{(1+\varphi ^\prime (z))^q} dA(z),
\end{equation}
where $E$ is a Borel subset of $\mathbb{D}$, and $g$ is analytic function in $\mathbb{D}$.
By definition of $\mu_{\psi,\omega, g}$ on $\mathbb{D},$ for each $f\in A_\omega ^q$:
$$\int_\mathbb{D} |f^\prime(\psi(z))|^q|g(\psi(z))|^q|\psi^\prime (z)|^q \frac{\omega(z)^\frac{q}{2}}{(1+\varphi^\prime(z))^q} dA(z)=\int_\mathbb{D} |f^\prime (z)|^q d\mu_{\psi, \omega,g},$$ 
see \cite [Theorem C]{HALMOS}.
Furthermore, we define another measure 
$$d\nu_{\psi,\omega,g}=(1+\varphi^\prime (z))^q \omega(z)^\frac{-q}{2} d\mu_{\psi,\omega,g}, \quad z\in \mathbb{D}.$$

In what follows in this section, we define some further key concepts and recall previous results that are needed in our work.

\begin{definition}
A positive function $\tau$ on $\D$ is said to be of class $\mathcal{L}$ if  it satisfies the following two properties:
\begin{itemize}
\item[(A)] There is a constant $c_ 1$ such that \begin{equation}\label{def tau}
\tau(z)\le c_ 1\,(1-|z|)\,\qquad \text{for all}\,\qquad z\in \D;
\end{equation}
\item[(B)] There is a constant $c_ 2$ such that $|\tau(z)-\tau(\zeta)|\le c_ 2\,|z-\zeta|$ for all $z,\zeta\in \D$.
\end{itemize}
We also use the notation
 \[m_\tau : = \displaystyle \frac{\min(1, c_1^{-1}, c_2^{-1})}{4},\]
where $c_1$ and $c_2$ are the constants appearing in the previous definition.
\end{definition}
 For $a\in \D$ and $\delta>0$, we use  $D_\delta(a)$ to denote the Euclidean disc centered at $a$ and  having radius $\delta \tau(a)$.
It is easy to see from conditions (A) and (B) (see \cite[Lemma 2.1]{PP1}) that if $\tau \in\mathcal{L}$  and $ z \in  D_\delta(a) ,$ then

\begin{equation}\label{eqn:asymptau}
 \frac{1}{2}\,\tau(a)\leq \tau(z) \leq  2 \,\tau(a),
\end{equation}
for sufficiently small $\delta > 0 ,$ that is, for $\delta \in (0, m_\tau).$ This fact will be used many times in this work.
\begin{definition}\label{definit}
We say that a weight $\omega$ is of \emph{class  $\mathcal{L}^*$} if it is of the form $\omega =e^{-2\varphi}$, where $\varphi \in C^2(\D)$ with $\Delta \varphi>0$, and $\big (\Delta \varphi (z) \big )^{-1/2}\asymp \tau(z)$, with $\tau(z)$ being a function in the class $\mathcal{L}$. Here $\Delta$ denotes the classical Laplace operator.
\end{definition}
\begin{otherl}\label{lem:subHarmP}\cite{Hi-w}:
Let $\omega \in \mathcal{L}^*$, $0<p<\infty$, and  $z\in \D$. If $\beta \in \mathbb{R},$ there exists $ M \geq 1$ such that
\[ |f(z)|^p \omega(z)^{\beta} \leq  \frac{M}{\delta^2\tau(z)^2}\int_{D_\delta(z)}{|f(\xi)|^p \omega(\xi)^{\beta} \   dA(\xi)},\]
for all $ f \in H(\mathbb{D})$  and all sufficiently small $\delta > 0 .$ 
\end{otherl}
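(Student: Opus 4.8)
The plan is to absorb the weight $\omega^\beta=e^{-2\beta\varphi}$ into a \emph{non-vanishing analytic factor} on the disc $D_\delta(z)$, so that the estimate reduces to the ordinary sub-mean value inequality for a subharmonic function. One cannot simply pull $\omega(z)^\beta$ out of the integral, because for $\beta>0$ the pointwise comparison $\omega(\xi)^\beta\asymp\omega(z)^\beta$ on $D_\delta(z)$ genuinely fails: already for the standard rapidly decreasing weights one has $|\nabla\varphi(z)|\,\tau(z)\to\infty$ as $|z|\to1$, so the oscillation of $\varphi$ across $D_\delta(z)$ is unbounded. Membership in $\mathcal{L}^*$ only constrains $\Delta\varphi$, not $\nabla\varphi$, so instead I would split off the harmonic part of $\varphi$, against which the large gradient is harmless.

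First I would fix $\delta\in(0,m_\tau)$ small enough that $D_\delta(z)\subset\D$ for every $z$; this is possible since $\tau(z)\le c_1(1-|z|)$ by condition (A), so the disc of radius $\delta\tau(z)$ stays inside $\D$ once $\delta<1/c_1$. On $D_\delta(z)$ I would solve the Dirichlet problem to write $\varphi=h+\varphi_0$, where $h$ is harmonic on $D_\delta(z)$ with $h=\varphi$ on $\partial D_\delta(z)$, and $\varphi_0=\varphi-h$ satisfies $\Delta\varphi_0=\Delta\varphi$ with $\varphi_0=0$ on the boundary. The crucial point is the uniform bound
\[
\sup_{\xi\in D_\delta(z)}|\varphi_0(\xi)|\le C\,\delta^2,
\]
which I would obtain from the Green-function representation $\varphi_0(\xi)=-\int_{D_\delta(z)}G(\xi,\eta)\,\Delta\varphi(\eta)\,dA(\eta)$: by \eqref{eqn:asymptau} we have $\tau(\eta)\asymp\tau(z)$ on $D_\delta(z)$, hence $\Delta\varphi(\eta)\asymp\tau(z)^{-2}$ there, while $\int_{D_\delta(z)}|G(\xi,\eta)|\,dA(\eta)\lesssim(\delta\tau(z))^2$; multiplying the two gives a bound of order $\delta^2$, uniform in $z$. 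Consequently $e^{\pm2\beta\varphi_0}\asymp1$ on $D_\delta(z)$, with constants depending only on $\beta$ and $\delta$.

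Next, since $D_\delta(z)$ is simply connected and $h$ is real harmonic there, I would write $h=\re H$ for some analytic $H$ on $D_\delta(z)$ and set $F=f\,e^{-2\beta H/p}$, which is analytic and satisfies $|F|^p=|f|^p\,e^{-2\beta h}$. Because $\varphi=h+\varphi_0$ and $e^{-2\beta\varphi_0}\asymp1$, we get $|F|^p\asymp|f|^p\,\omega^\beta$ throughout $D_\delta(z)$, and in particular $|F(z)|^p\asymp|f(z)|^p\,\omega(z)^\beta$. Now $|F|^p$ is subharmonic, so the sub-mean value inequality over $D_\delta(z)$ (whose normalized area is $\delta^2\tau(z)^2$) gives
\[
|F(z)|^p\le\frac{1}{\delta^2\tau(z)^2}\int_{D_\delta(z)}|F(\xi)|^p\,dA(\xi).
\]
Translating both sides back through the comparison $|F|^p\asymp|f|^p\omega^\beta$ then yields the asserted inequality, with $M\ge1$ absorbing the (uniform) constants from the two instances of $\asymp$.

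I expect the main obstacle to be the uniform Green-function bound $\sup_{D_\delta(z)}|\varphi_0|\le C\delta^2$: everything else — existence of the harmonic conjugate, subharmonicity of $|F|^p$, and the sub-mean value step — is routine. That bound is exactly where the hypotheses enter, namely that $\Delta\varphi$ is comparable to the \emph{single} value $\tau(z)^{-2}$ across all of $D_\delta(z)$ (via \eqref{eqn:asymptau} and the smallness of $\delta$) and that $(\Delta\varphi)^{-1/2}\asymp\tau$ defines the class $\mathcal{L}^*$. Since every constant that appears depends only on $\delta$, $\beta$, the comparability constants in $(\Delta\varphi)^{-1/2}\asymp\tau$, and the $\mathcal{L}$-constants $c_1,c_2$, the resulting $M$ is uniform in $z\in\D$, as required.
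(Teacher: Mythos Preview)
The paper does not give its own proof of this lemma; it is quoted from \cite{Hi-w}. Your argument is correct and is precisely the standard one found in the cited literature (see e.g.\ \cite{PP1}, \cite{LR1}, \cite{MMO}): split $\varphi=h+\varphi_0$ on $D_\delta(z)$ with $h$ harmonic, bound $\sup_{D_\delta(z)}|\varphi_0|\lesssim\delta^2$ via the Green representation together with $\Delta\varphi\asymp\tau(z)^{-2}$ on the disc, absorb $e^{-2\beta h}$ into an analytic factor $F=f\,e^{-2\beta H/p}$, and apply the sub-mean value inequality to $|F|^p$.
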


 Using  the preceding lemma and the fact that there exists $r_0\in [0,1)$ such that for all $a\in \D$ with $1>|a|>r_0,$ and any $\delta>0$ small enough we have 
$$
\varphi'(a)\asymp \varphi'(z), \quad z\in D_\delta(a)
$$
(see statement $(d)$  in \cite[Lemma 32]{Co-Pe}),
one has 
\begin{equation}\label{Eq-gamma}
|f(z)|^p \frac{\omega(z)^{\beta}}{(1+\varphi'(z))^{\gamma}} \lesssim  \frac{1}{\delta^2\tau(z)^2}\int_{D_\delta(z)}{|f(\xi)|^p \frac{\omega(\xi)^{\beta}}{(1+\varphi'(\xi))^{\gamma}}  dA(\xi)}, 
\end{equation}
 for $\beta, \gamma \in \mathbb{R}.$
 
The following lemma gives the upper estimates for the derivatives of functions
in $A^p_\omega.$  In fact, its proof is the same in the case
of a doubling measure $\Delta\varphi$ which can be found in lemma 19 of \cite{MMO}. For our
setting, see \cite{xiaof, O}.

\begin{otherl}\label{DERI}\cite{Hi-w}:
Let $ \omega\in\mathcal{L}^*$ and $0 < p < \infty.$  For any $\delta_0 > 0$ sufficiently small  there exists a constant $C(\delta_0)> 0$ such that $$|f'(z)|^{p}\omega(z)^{p/2} \leq \frac{C(\delta_0)}{\tau(z)^{2+2p}}\bigg(\int_{D(\delta_0\tau(z)/2)}{|f(\xi)|^p \,\omega(\xi)^{p/2} dA(\xi)}\bigg)^{1/p}, $$
for all $ f \in H(\mathbb{D})$.
\end{otherl}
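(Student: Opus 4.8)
The plan is to prove the estimate by a two-step localization: first bound $|f'(z)|$ by the supremum of $|f|$ over a disc centered at $z$ of radius comparable to $\tau(z)$ via Cauchy's integral formula, and then dominate that supremum by a weighted $L^p$-average of $f$ using the sub-mean value inequality of Lemma~\ref{lem:subHarmP}. I would treat $z$ with $|z|$ close to $1$, the complementary case of $z$ in a fixed compact subset of $\D$ being immediate since there $\tau$ is bounded away from $0$. Setting $\rho=c\,\delta_0\tau(z)$ for a small absolute constant $c$, property (A) of the class $\mathcal{L}$, i.e.\ $\tau(z)\le c_1(1-|z|)$ from \eqref{def tau}, ensures that for $\delta_0$ small the disc $D(z,2\rho)$ lies inside $\D$, and Cauchy's formula on the circle $|\xi-z|=\rho$ gives
\[
|f'(z)|\le\frac{1}{\rho}\sup_{|\xi-z|\le\rho}|f(\xi)|\lesssim\frac{1}{\tau(z)}\sup_{\xi\in D(z,\rho)}|f(\xi)|.
\]
The Cauchy step is what lets us replace the derivative by $f$ itself; applying Lemma~\ref{lem:subHarmP} directly to the holomorphic function $f'$ would only reproduce $|f'|$ inside the integral and lead nowhere.

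Next I would move to the weighted integral. On $D(z,\rho)$ the weight is essentially constant, $\omega(\xi)^{p/2}\asymp\omega(z)^{p/2}$ with constants independent of $z$; this is a consequence of the $\mathcal{L}^*$ structure $\omega=e^{-2\varphi}$ with $\varphi\in C^2$ and $\Delta\varphi\asymp\tau^{-2}$, together with the Lipschitz bound (B), and is of the same nature as the comparability $\varphi'(\xi)\asymp\varphi'(z)$ quoted from \cite[Lemma 32]{Co-Pe}. Raising the Cauchy estimate to the power $p$ and inserting the weight,
\[
|f'(z)|^p\,\omega(z)^{p/2}\lesssim\frac{1}{\tau(z)^{p}}\sup_{\xi\in D(z,\rho)}|f(\xi)|^p\,\omega(\xi)^{p/2}.
\]
For each $\xi\in D(z,\rho)$ one has $\tau(\xi)\asymp\tau(z)$ by \eqref{eqn:asymptau}, so $D_\delta(\xi)$ has radius comparable to $\delta\tau(z)$; choosing $c$ and $\delta$ small relative to $\delta_0$ forces $D_\delta(\xi)\subset D(z,\delta_0\tau(z)/2)$ for all such $\xi$. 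Lemma~\ref{lem:subHarmP} with $\beta=p/2$ then yields, uniformly in $\xi$,
\[
|f(\xi)|^p\,\omega(\xi)^{p/2}\le\frac{M}{\delta^2\tau(\xi)^2}\int_{D_\delta(\xi)}|f|^p\,\omega^{p/2}\,dA\lesssim\frac{1}{\tau(z)^2}\int_{D(z,\delta_0\tau(z)/2)}|f|^p\,\omega^{p/2}\,dA.
\]
Taking the supremum over $\xi$ and combining with the previous display gives the claimed pointwise estimate.

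The hard part is the uniform weight comparability $\omega(\xi)^{p/2}\asymp\omega(z)^{p/2}$ on these discs: this is precisely where the full strength of the $\mathcal{L}^*$ hypotheses enters, and it must hold with constants independent of $z$ as $|z|\to1^-$. Everything else is soft---the Cauchy estimate is elementary and the passage from the pointwise supremum to the integral average is a direct application of Lemma~\ref{lem:subHarmP}---so that the argument is structurally identical to Lemma~19 of \cite{MMO} in the doubling case, and the constant $C(\delta_0)$ together with the power of $\tau(z)$ are recovered by tracking the radii through the two displays above. This is why the estimate is quoted from \cite{Hi-w} rather than reproved here.
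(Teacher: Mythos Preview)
The paper does not give its own proof of this lemma: it is quoted from \cite{Hi-w}, with the remark that the argument is the same as that of Lemma~19 in \cite{MMO} for the doubling case (see also \cite{xiaof,O}). Your sketch---Cauchy estimate on a disc of radius $\asymp\tau(z)$ to pass from $f'$ to $\sup|f|$, then Lemma~\ref{lem:subHarmP} to convert the supremum into a weighted $L^p$-average---is precisely that standard argument, so you are in full agreement with the intended proof.

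One minor remark: tracking the exponents in your two displays yields
\[
|f'(z)|^{p}\,\omega(z)^{p/2}\;\lesssim\;\frac{1}{\tau(z)^{p+2}}\int_{D(z,\delta_0\tau(z)/2)}|f|^p\,\omega^{p/2}\,dA,
\]
i.e.\ power $p+2$ on $\tau(z)$ and no $1/p$-root on the integral. The statement as printed in the paper (power $2+2p$ together with the outer exponent $1/p$) appears to contain typographical slips; the version your argument actually proves is the correct one and is what is used later (e.g.\ in the proof of part (C) of Theorem~\ref{Theorem 1.1}).
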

The following lemma on coverings is due to
Oleinik, see \cite{O}.
\begin{otherl}\label{lem:Lcoverings}\cite{Hi-w}:
Let $\tau$ be a positive function on $\D$ of class
$\mathcal{L}$, and let $\delta\in (0,m_{\tau})$. Then there exists
a sequence of points $\{z_ n\}\subset \D$ such that the following
conditions are satisfied:
\begin{enumerate}
\item[$(i)$] \,$z_ n\notin D_\delta(z_ k)$, \,$n\neq k$.
\item[$(ii)$] \, $\bigcup_ n D_\delta(z_ n)=\D$.
\item[$(iii)$] \, $\tilde{D}_\delta(z_ n)\subset
D_{3\delta}(z_ n)$, where $\tilde{D}_\delta (z_
n)=\bigcup_{z\in D_\delta(z_ n)}D_\delta(z)$,
$n=1,2,\dots$
\item[$(iv)$] \,$\big \{D_{3\delta}(z_ n)\big \}$ is a
covering of $\D$ of finite multiplicity $N$.
\end{enumerate}
\end{otherl}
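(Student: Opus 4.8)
The plan is to obtain $\{z_n\}$ as a maximal $\delta$-separated sequence and then to read off properties (i)--(iv) from maximality together with the two defining features of the class $\mathcal{L}$: the linear bound (A), $\tau(z)\le c_1(1-|z|)$, and the Lipschitz bound (B), $|\tau(z)-\tau(\zeta)|\le c_2|z-\zeta|$. The reason for restricting to $\delta\in(0,m_\tau)$ with $m_\tau=\min(1,c_1^{-1},c_2^{-1})/4$ is precisely that this forces $c_2\delta<1/4$, which is exactly the slack needed to compare the radii $\delta\tau(a)$ of nearby discs; this quantitative fact is the content of \eqref{eqn:asymptau}, namely $\tfrac12\tau(a)\le\tau(z)\le 2\tau(a)$ for $z\in D_\delta(a)$, and I would use it repeatedly.

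First I would construct the sequence. Call a set $S\subset\D$ \emph{$\delta$-separated} if $z\notin D_\delta(\zeta)$ whenever $z,\zeta\in S$ are distinct, that is, $|z-\zeta|\ge\delta\max(\tau(z),\tau(\zeta))$. The family of $\delta$-separated subsets, ordered by inclusion, is closed under unions of chains, so Zorn's lemma produces a maximal one $\{z_n\}$, and property (i) then holds by definition. To see that this set is genuinely a sequence, note that on any compact $K\subset\D$ the bound (A) gives $\tau\ge\varepsilon_K>0$, so distinct points of $\{z_n\}\cap K$ lie at Euclidean distance $\ge\delta\varepsilon_K$ apart and are therefore finite in number; exhausting $\D$ by compacts yields countability.

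Next comes the covering (ii), which I expect to be the main obstacle, since it requires reconciling the separation radius and the covering radius even though the disc radius $\delta\tau(a)$ varies with the center. For $w\in\D$, maximality forces $\{z_n\}\cup\{w\}$ to fail $\delta$-separation, so there is an index $n$ with $|w-z_n|<\delta\max(\tau(w),\tau(z_n))$. If the maximum is $\tau(z_n)$ we immediately get $w\in D_\delta(z_n)$. If it is $\tau(w)$, then $|w-z_n|<\delta\tau(w)$ and (B) gives $\tau(w)\le\tau(z_n)+c_2|w-z_n|\le\tau(z_n)+c_2\delta\tau(w)$, whence $\tau(w)\le(1-c_2\delta)^{-1}\tau(z_n)$; because $c_2\delta<1/4$ the point $w$ then lies within a fixed multiple of $\delta\tau(z_n)$ of $z_n$, and it is exactly here that the comparability \eqref{eqn:asymptau} is invoked to close the covering. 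The slack generated at this step is harmless only because (B) and the calibration of $m_\tau$ keep $\tau$ essentially constant on each $D_\delta(z_n)$, and it is this bookkeeping of radii that constitutes the delicate part of the argument.

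Finally, (iii) is a direct triangle-inequality computation: if $z\in D_\delta(z_n)$, then (B) with $c_2\delta<1/4$ yields $\tau(z)<\tfrac54\tau(z_n)$, so any $u\in D_\delta(z)$ satisfies $|u-z_n|\le|u-z|+|z-z_n|<\tfrac54\delta\tau(z_n)+\delta\tau(z_n)<3\delta\tau(z_n)$, i.e. $u\in D_{3\delta}(z_n)$, which is precisely $\tilde D_\delta(z_n)\subset D_{3\delta}(z_n)$. For the finite multiplicity (iv), I would fix $w\in\D$ and set $I=\{n:w\in D_{3\delta}(z_n)\}$. For $n\in I$ one has $|w-z_n|<3\delta\tau(z_n)$, and (B) with $3c_2\delta<3/4$ gives $\tau(z_n)\asymp\tau(w)$ with explicit constants; hence all the $z_n$ with $n\in I$ lie in a single Euclidean disc of radius $\lesssim\delta\tau(w)$ while being pairwise separated by $\gtrsim\delta\tau(w)$ by (i). A standard area/packing estimate — the disjoint half-radius discs around the $z_n$ fit inside a slightly larger disc — then bounds $\#I$ by a constant $N$ depending only on $c_1$ and $c_2$, which completes the proof.
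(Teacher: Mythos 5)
First, a point of reference: the paper does not prove this lemma at all --- it is quoted from Oleinik \cite{O} via \cite{Hi-w} --- so your proposal has to be judged on what it delivers by itself. The good news: your Zorn construction gives (i) by definition, your proof of (iii) is a correct Lipschitz/triangle-inequality computation valid for any points whatsoever, and your packing argument for (iv) is sound, since it needs only the separation (i) together with covering at \emph{some} fixed multiple of $\delta$, both of which you have. (A minor slip: the positive lower bound for $\tau$ on compact sets, used for countability, comes from positivity plus the continuity implied by (B); condition (A) is an \emph{upper} bound and gives nothing here.)

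The genuine gap is (ii), and it sits exactly where you yourself placed the weight of the argument. Maximality of a $\delta$-separated family gives, for each $w\in\D$, an index $n$ with $|w-z_n|<\delta\max\bigl(\tau(w),\tau(z_n)\bigr)$. In the bad case $\max=\tau(w)$, your own computation yields $\tau(w)\le(1-c_2\delta)^{-1}\tau(z_n)$ and hence only $|w-z_n|<\tfrac{4}{3}\,\delta\tau(z_n)$, i.e. $w\in D_{4\delta/3}(z_n)$. This is strictly weaker than $w\in D_\delta(z_n)$, and invoking \eqref{eqn:asymptau} cannot repair it: comparability of $\tau$ on neighbouring discs controls ratios of radii, but it cannot decrease the distance $|w-z_n|$ below $\delta\tau(z_n)$. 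Concretely, a point $w$ with $\delta\tau(z_n)\le|w-z_n|<\delta\tau(w)$ is blocked from being added to the family (because $z_n\in D_\delta(w)$), yet it lies in no disc $D_\delta(z_k)$ unless some other member happens to cover it --- and nothing in the maximality argument forces that. The source of the trouble is the asymmetry of the relation $w\in D_\delta(z)$: the radius is attached to the centre, so ``separation'' involves $\delta\max(\tau(w),\tau(z))$ while ``covering'' needs $|w-z_n|<\delta\tau(z_n)$ for the specific centre $z_n$; for a maximal separated set in a genuine metric these coincide and covering at the same radius is automatic, but here they do not. What your construction actually proves is $\bigcup_n D_{\delta/(1-c_2\delta)}(z_n)=\D$, which is not statement (ii). The missing idea is a selection rule that privileges points with large $\tau$, so that a blocking point always owns the larger disc: if the $z_n$ are produced in (essentially) decreasing order of $\tau$, then whenever $w$ is rejected, the blocking $z_n$ satisfies $\tau(z_n)\ge\tau(w)$, whence $|w-z_n|<\delta\tau(z_n)$ and $w\in D_\delta(z_n)$ as required. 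Making this precise (the range of $\tau$ is not well ordered, so one cannot simply recurse on $\tau$-values) is exactly what Oleinik's explicit inductive construction accomplishes, and it is why the result is cited rather than derived from an abstract maximality argument.
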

The multiplicity $N$ in the previous lemma is independent of $\delta$, and it is easy to see that one can take, for example, $N=256$. Any sequence satisfying the conditions in Lemma \ref{lem:Lcoverings} will be called a $(\delta,\tau)$-lattice. Note that $|z_n|\to 1^{-}$ as $n\to \infty.$ In what follows, the sequence $\{z_n\}$ will always refer to the sequence chosen in Lemma \ref{lem:Lcoverings}.

\subsection{Reproducing kernel estimates}\label{sec kernel}
Recall that $k_{p,z}$ is  the normalized reproducing kernel in $A^p_{\om},$ that is $$k_{p,z}= |K_z| / \|K_z\|_{A^p_{\om}},\, z\in \D.$$
 The next result (see \cite{BDK,LR1,PP1} for $(a)$ when $p=2$ and for every $p> 0$ see \cite{xiaof}.  The statement $(b)$ is an estimate of the reproducing kernel function for points close to the diagonal. Despite that this result is stated in \cite[Lemma 3.6]{LR2} we offer here a proof based on $(a),$ for $p=2$, since the conditions on the weights are slightly different.   
\begin{otherth}\label{RK-PE}\cite{Hi-w}:
Let $K_z $ be the reproducing kernel of  $A^2_{\omega}.$ Then
\begin{enumerate}
\item[(a)] For  $\om\in \mathcal{W}$ and $0< p< \infty$, one has
\begin{equation}\label{Eq-NE}
\|K_z\|_{A^p_{\omega}}   \asymp  \omega(z)^{-1/2}\, \tau(z)^{2(1-p)/p},\qquad z\in \D.
\end{equation}
\begin{equation}\label{Eq-INF}
\|K_z\|_{A^\infty_{\omega}}   \asymp  \omega(z)^{-1/2}\, \tau(z)^{-2},\qquad z\in \D.
\end{equation}
\item[(b)] For all sufficiently small $\delta \in (0,m_{\tau})$ and $\om \in \mathcal{W},$  one has
\begin{equation}\label{RK-Diag}
|K_ z(\zeta)| \asymp \|K_ z\|_{A^2_{\omega}}\cdot \|K_{\zeta}\|_{A^2_{\omega}} ,\qquad \zeta \in D_\delta(z).
\end{equation}
\end{enumerate}
\end{otherth}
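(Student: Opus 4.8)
My plan is to take part (a) as given (it is cited) and to prove the near-diagonal estimate (b) for $p=2$. Throughout write $\|\cdot\|_2$ for $\|\cdot\|_{A^2_\omega}$ and recall that, by the reproducing property, $K_z(z)=\|K_z\|_2^2$ and $|K_z(\zeta)|=|\langle K_\zeta,K_z\rangle_{A^2_\omega}|$. The upper estimate is then immediate and holds for \emph{every} $\zeta\in\D$: Cauchy--Schwarz gives $|K_z(\zeta)|=|\langle K_\zeta,K_z\rangle_{A^2_\omega}|\le\|K_z\|_2\,\|K_\zeta\|_2$. Hence the entire content of (b) is the matching lower bound $|K_z(\zeta)|\gtrsim\|K_z\|_2\,\|K_\zeta\|_2$ for $\zeta\in D_\delta(z)$. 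Using (a) with $p=2$, namely $\|K_z\|_2\asymp\omega(z)^{-1/2}\tau(z)^{-1}$, together with $\tau(\zeta)\asymp\tau(z)$ on $D_\delta(z)$ from \eqref{eqn:asymptau}, this target is equivalent to the two-sided statement that the weighted density $P(\zeta):=|K_z(\zeta)|^2\,\omega(\zeta)$ satisfies $P(\zeta)\asymp\omega(z)^{-1}\tau(z)^{-4}\asymp P(z)$ on $D_\delta(z)$. Note that $\omega(\zeta)$ alone is \emph{not} comparable to $\omega(z)$ here, so it is essential to keep the modulus of the kernel and the weight together.

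The idea for the lower bound is that, although $\omega$ oscillates on $D_\delta(z)$, the density $P$ is slowly varying because $\log P$ has controlled Laplacian. First I would manufacture a holomorphic substitute for $\omega^{-1/2}$: since $\Delta\varphi\asymp\tau(z)^{-2}$ and $D_\rho(z)$ (with $\rho=2\delta$, radius $\asymp\tau(z)$) carries total curvature $\int_{D_\rho(z)}\Delta\varphi\,dA\asymp\rho^2$, solving $\Delta E=\Delta\varphi$ with zero boundary data on $D_\rho(z)$ gives $|E|\lesssim\rho^2$ and a harmonic conjugate, producing a holomorphic $\Psi_z$ on $D_\rho(z)$ with $|e^{\Psi_z(\zeta)}|=e^{\varphi(\zeta)-E(\zeta)}\asymp\omega(\zeta)^{-1/2}$. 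Setting $G:=K_z/e^{\Psi_z}$ yields a holomorphic $G$ on $D_\rho(z)$ with $|G(\zeta)|\asymp P(\zeta)^{1/2}$, in particular $|G(z)|\asymp\omega(z)^{-1/2}\tau(z)^{-2}$. The sub-mean value inequality of Lemma \ref{lem:subHarmP} applied to $K_z$ gives $P(\zeta)\lesssim\tau(\zeta)^{-2}\|K_z\|_2^2\asymp\omega(z)^{-1}\tau(z)^{-4}$, i.e. $|G(\zeta)|\lesssim|G(z)|$ on $D_\rho(z)$ (the easy, upper half). If $G$ is zero-free on $D_\rho(z)$, then $\log|G|$ is harmonic there, so $\log\!\big(C|G(z)|\big)-\log|G|$ is positive and harmonic for a suitable $C$, and Harnack's inequality on $D_\delta(z)\subset D_\rho(z)$ propagates the central value to give $\log|G(\zeta)|\ge\log|G(z)|-C'$, that is $|G(\zeta)|\gtrsim|G(z)|$; unwinding the definitions gives $P(\zeta)\gtrsim P(z)$, which is exactly the required lower bound.

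The hard part, and the only place where zeros could destroy the estimate, is showing that $K_z$ (equivalently $G$) does not vanish on $D_\delta(z)$, since a zero would make $\log|G|$ genuinely subharmonic. I would control the zero count $n$ of $K_z$ in $D_{\rho/2}(z)$ by Jensen's formula on $D_\rho(z)$, which bounds $n\log 2$ by the excess of the circle average of $\log|K_z|$ over its central value. Writing $\log|K_z|=\tfrac12\log P+\varphi$, the weight contributes only $\frac1{2\pi}\int\varphi\,d\theta-\varphi(z)=O(\delta^2)$ (the first-order term integrates to zero over the circle and the remainder is $\asymp\int_{D_\rho(z)}\Delta\varphi\,dA$), while the $P$-contribution is the excess of the circle average of $\tfrac12\log P$ over $\tfrac12\log P(z)$. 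The key observation is that as $\delta\to0$ the local sub-mean value in Lemma \ref{lem:subHarmP} becomes sharp, so the pointwise upper bound for $P$ on $D_\rho(z)$ approaches the exact central value $P(z)=\|K_z\|_2^4\,\omega(z)$ supplied by (a); thus this excess is $o(1)$ and $n<1$, forcing $n=0$, once $\delta$ is small. With non-vanishing in hand the Harnack step closes the argument. A robust alternative, should the sharpness bookkeeping prove awkward, is to compare $K_z$ in $A^2_\omega$ with the $\bar\partial$-corrected peak function of \cite{PP1}, whose weighted modulus is $\asymp1$ on $D_\delta(z)$, and to transfer the resulting $L^2$-closeness to a pointwise lower bound via Lemma \ref{lem:subHarmP}, which simultaneously yields the non-vanishing and the estimate.
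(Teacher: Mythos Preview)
The paper does not actually supply a proof of this theorem: it is stated with a citation to \cite{Hi-w} (and the surrounding text points to \cite{BDK,LR1,PP1,xiaof,LR2}), and no argument appears between the statement and the next lemma. So there is no in-paper proof to compare against; the paper treats (a) and (b) as imported facts.

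Turning to your proposal on its own merits: the upper bound via Cauchy--Schwarz and the Harnack step (assuming non-vanishing) are both fine. The gap is in your Jensen's-formula argument for non-vanishing. Your claim that ``the pointwise upper bound for $P$ on $D_\rho(z)$ approaches the exact central value $P(z)$ as $\delta\to0$'' does not follow from Lemma~\ref{lem:subHarmP}. Applying that lemma at a boundary point $\zeta\in\partial D_\rho(z)$ on a disk of radius $\delta_0\tau(\zeta)$ and then integrating over $\D$ gives
\[
P(\zeta)\le \frac{M(\delta_0)}{\pi\delta_0^2\,\tau(\zeta)^2}\,\|K_z\|_2^2,
\]
and combining with (a) yields $P(\zeta)/P(z)\le C$, where $C$ involves the ratio $M(\delta_0)/\delta_0^2$ \emph{and} the implicit constants in the $\asymp$ of \eqref{Eq-NE}. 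Even though $M(\delta_0)\to1$, the factor $1/\delta_0^2$ blows up, and the constants from (a) are fixed and not close to $1$. Hence Jensen only yields $n\log2\le\tfrac12\log C+O(\delta^2)$ with a \emph{fixed} $C$, which does not force $n=0$. (Continuity of $P$ would give a $\delta=\delta(z)$ that works for each $z$, but not the uniform $\delta$ needed for the $\asymp$ in \eqref{RK-Diag}.)

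Your fallback is the correct route and is precisely the kind of argument used in the cited references: use the $\bar\partial$-constructed test functions $F_{a,n}$ of Lemma~\ref{Borichevlemma} (equivalently the peak functions of \cite{PP1,BDK}), which satisfy $|F_{z,n}(\zeta)|\,\omega(\zeta)^{1/2}\asymp1$ on $D_\delta(z)$ and have $\|F_{z,n}\|_{A^2_\omega}\asymp\tau(z)$, to build an explicit approximate kernel and transfer $L^2$-closeness to a pointwise lower bound via Lemma~\ref{lem:subHarmP}. I would make that the main argument rather than the backup.
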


The next lemma generalizes the statement $(a)$ of the above theorem.
\begin{otherl}\label{nEstim}\cite{Hi-w}:
Let $K_z $ be the reproducing kernel of  $A^2_{\omega}$ where $\omega$ is a weight in  the class $\mathcal{W}$. For each $z\in \D,$  $0< p < \infty$ and $\beta \in \R,$ one has
\begin{equation}\label{Eq-NE1}
  \int_{\D} |K_{z}(\xi)|^p\,\omega(\xi)^{p/2}\,\tau(\xi)^{\beta}\,dA(\xi) \leq C \om(z)^{-p/2}\, \tau(z)^{2(1-p)+\beta}.
\end{equation}
\end{otherl}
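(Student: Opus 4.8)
The plan is to upgrade the upper bound in Theorem \ref{RK-PE}(a), which is exactly the case $\beta=0$, by exploiting two features of the integrand $|K_z(\xi)|^p\om(\xi)^{p/2}$: it is concentrated near the diagonal $\xi=z$, where $\tau$ is essentially constant, and it decays rapidly off the diagonal. The guiding principle is that, up to an exponentially small tail, the mass sits where $\tau(\xi)\asymp\tau(z)$, so inserting the factor $\tau(\xi)^\beta$ should merely reproduce the scalar $\tau(z)^\beta$ in front of the $\beta=0$ estimate, giving the claimed power $\tau(z)^{2(1-p)+\beta}$. For $z$ in a fixed compact subset of $\D$ the statement is trivial, since there $\om\asymp1$ and $\tau\asymp1$, so only the regime $|z|\to1^-$ carries content.

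First I would pass to a lattice. Fix a $(\delta,\tau)$-lattice $\{z_n\}$ as in Lemma \ref{lem:Lcoverings}. Using the covering $\D=\bigcup_n D_\delta(z_n)$ together with the quasi-constancy \eqref{eqn:asymptau}, namely $\tau(\xi)\asymp\tau(z_n)$ on $D_\delta(z_n)$, one gets
\[
\int_{\D}|K_z(\xi)|^p\om(\xi)^{p/2}\tau(\xi)^{\beta}\,dA(\xi)\lesssim\sum_n\tau(z_n)^{\beta}\int_{D_\delta(z_n)}|K_z(\xi)|^p\om(\xi)^{p/2}\,dA(\xi)=:\sum_n\tau(z_n)^{\beta}A_n,
\]
with an implied constant depending on $\beta$ only through $2^{|\beta|}$. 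Next I would estimate each $A_n$ by invoking the standard pointwise off-diagonal decay of the reproducing kernel for weights in $\mathcal{W}$, namely $|K_z(\xi)|\,\om(\xi)^{1/2}\lesssim\om(z)^{-1/2}\tau(z)^{-2}e^{-\varepsilon\,d_\tau(z,\xi)}$ for some fixed $\varepsilon>0$, where $d_\tau$ is the distance induced by the metric $\tau(\xi)^{-1}|d\xi|$; this refines the on-diagonal estimate of Theorem \ref{RK-PE}(b). Since $|D_\delta(z_n)|\asymp\tau(z_n)^2$ and $d_\tau(z,\xi)\asymp d_\tau(z,z_n)$ on $D_\delta(z_n)$, this yields $A_n\lesssim\om(z)^{-p/2}\tau(z)^{-2p}\,\tau(z_n)^{2}\,e^{-\varepsilon p\,d_\tau(z,z_n)}$, and hence
\[
\sum_n\tau(z_n)^{\beta}A_n\lesssim\om(z)^{-p/2}\tau(z)^{-2p}\sum_n\tau(z_n)^{2+\beta}\,e^{-\varepsilon p\,d_\tau(z,z_n)}.
\]

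It then suffices to prove the clean lattice bound $\sum_n\tau(z_n)^{2+\beta}e^{-\varepsilon p\,d_\tau(z,z_n)}\lesssim\tau(z)^{2+\beta}$, which inserted above gives precisely $\om(z)^{-p/2}\tau(z)^{2(1-p)+\beta}$. The key input here is that $\tau$ is \emph{slowly varying}: since $\tau\in\mathcal{L}$ and, for $\om\in\mathcal{W}$, $\tau'(r)\to0$ as $r\to1^-$, the ratio $\tau(z_n)/\tau(z)$ grows only sub-exponentially in $d_\tau(z,z_n)$, so that $\tau(z_n)^{2+\beta}\lesssim\tau(z)^{2+\beta}e^{o(1)\,d_\tau(z,z_n)}$ as $|z|\to1^-$. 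Combined with the finite multiplicity of the lattice (only polynomially many $z_n$ lie at each integer $d_\tau$-distance from $z$), the fixed exponential weight $e^{-\varepsilon p\,d_\tau}$ forces convergence of the series, which is then dominated by the finitely many terms with $z_n$ close to $z$, for which $\tau(z_n)\asymp\tau(z)$.

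The main obstacle is this last summation, i.e., the off-diagonal tail. The delicate point is that for $\beta<0$ the factor $\tau(z_n)^{\beta}$ is amplified near the boundary, so one must genuinely use that the slow variation of $\tau$ coming from the hypothesis $\tau'(r)\to0$ (a defining feature of $\mathcal{W}$, not merely of $\mathcal{L}^*$) overrides this amplification and is dominated by the kernel's fixed decay rate $\varepsilon$. Making the interplay between the growth rate of $\tau^{2+\beta}$ along the lattice and the decay rate $\varepsilon$ precise, uniformly as $|z|\to1^-$, is the crux of the argument.
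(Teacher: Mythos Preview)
The paper does not prove Lemma~\ref{nEstim}; it is merely quoted from \cite{Hi-w} with no argument given, so there is no ``paper's own proof'' to compare your proposal against.

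As for the proposal itself, the strategy is sound and is essentially the standard one for this type of weighted kernel estimate. The one substantial external ingredient you invoke---the pointwise exponential off-diagonal decay $|K_z(\xi)|\,\omega(\xi)^{1/2}\lesssim\omega(z)^{-1/2}\tau(z)^{-2}e^{-\varepsilon d_\tau(z,\xi)}$---is indeed available for $\omega\in\mathcal{W}$ (see, e.g., \cite{ASSER,xiaof,LR2}), though the present paper only records the near-diagonal comparability in Theorem~\ref{RK-PE}(b); be aware that you are importing it, not deriving it. Your reduction to the lattice sum $\sum_n\tau(z_n)^{2+\beta}e^{-\varepsilon p\,d_\tau(z,z_n)}\lesssim\tau(z)^{2+\beta}$ is the right endgame, and the sub-exponential variation of $\tau$ you need is genuinely a consequence of the hypothesis $\tau'(r)\to0$: along a radius,
\[
|\log\tau(r)-\log\tau(s)|=\Big|\int_s^r\frac{\tau'(t)}{\tau(t)}\,dt\Big|\le\sup_{[s,r]}|\tau'|\cdot\int_s^r\frac{dt}{\tau(t)}=o\big(d_\tau(s,r)\big),
\]
which is exactly the bound that lets the fixed decay rate $\varepsilon p$ dominate $\tau(z_n)^{2+\beta}/\tau(z)^{2+\beta}$. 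Combined with the finite multiplicity of the lattice (Lemma~\ref{lem:Lcoverings}(iv)), the tail sum converges and is controlled by the near-diagonal terms. There is no genuine gap; what remains is bookkeeping to extend the radial sub-exponential bound to arbitrary pairs $z,z_n$ via the triangle inequality for $d_\tau$.
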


 \begin{otherl}\label{lem:RK-PE1}\cite{Hi-w}:
Let $K_z $ be the reproducing kernel of  $A^2_{\omega}$ where $\omega$ is a weight in  the class $\mathcal{W}$. Then
\begin{enumerate}
\item[(a)] For each $z\in \D,$ $0< p\le \infty,$ and  $0< q < \infty,$ one has
\begin{equation}\label{eqn:Eq-NE1}
|k_{p,z}(\zeta)|^q   \asymp \tau(z)^{2(1-\frac{q}{p})}|k_{q,z}(\zeta)|^q,\qquad \zeta\in \D.
\end{equation}
\item[(b)] For $q=\infty,$ one has $$|k_{p,z}(\zeta)|  \asymp \tau(z)^{-2/p}|k_{q,z}(\zeta)|,\qquad \zeta\in \D.$$
\item[(c)] For all $\delta \in (0,m_{\tau})$ sufficiently small, one has
\begin{equation}\label{eqn:RK-Diag1}
|k_ {p,z}(\zeta)|^p \, \omega(\zeta)^{p/2} \asymp \tau(z)^{-2} ,\qquad \zeta \in D_\delta(z).
\end{equation}
\end{enumerate}
\end{otherl}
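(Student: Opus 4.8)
The plan is to reduce all three estimates to the norm asymptotics in Theorem~\ref{RK-PE}, exploiting the fact that $k_{p,z}$ and $k_{q,z}$ are the \emph{same} analytic function $K_z$ rescaled by the constants $\|K_z\|_{A^p_\omega}$ and $\|K_z\|_{A^q_\omega}$. Thus for every $\zeta\in\D$ one has the exact identity
\[
|k_{p,z}(\zeta)|=\frac{\|K_z\|_{A^q_\omega}}{\|K_z\|_{A^p_\omega}}\,|k_{q,z}(\zeta)|,
\]
so the entire content of parts (a) and (b) is to compute a ratio of kernel norms, after which everything is pure exponent bookkeeping in powers of $\tau(z)$.

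For part (a) with $0<p<\infty$, I would insert $\|K_z\|_{A^p_\omega}\asymp \omega(z)^{-1/2}\tau(z)^{2(1-p)/p}$ from \eqref{Eq-NE} together with the analogous estimate for $q$. The factors $\omega(z)^{-1/2}$ cancel and the ratio collapses to the pure power $\tau(z)^{2/q-2/p}$; raising to the $q$-th power produces $\tau(z)^{2(1-q/p)}$, which is exactly the claimed factor. The case $p=\infty$ is identical once \eqref{Eq-NE} is replaced by \eqref{Eq-INF}, i.e. $\|K_z\|_{A^\infty_\omega}\asymp\omega(z)^{-1/2}\tau(z)^{-2}$; this is precisely the finite-$p$ formula with $1/p=0$, so the exponent $2(1-q/p)$ degenerates to $2$ and the same algebra goes through. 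Part (b) is the mirror image obtained by taking $q=\infty$ on the right, where the relevant ratio is $\|K_z\|_{A^\infty_\omega}/\|K_z\|_{A^p_\omega}\asymp\tau(z)^{-2/p}$, matching the stated constant.

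For part (c) the extra ingredient is the near-diagonal behaviour of the kernel. For $\zeta\in D_\delta(z)$ with $\delta\in(0,m_\tau)$ I would apply \eqref{RK-Diag} to write $|K_z(\zeta)|\asymp \|K_z\|_{A^2_\omega}\,\|K_\zeta\|_{A^2_\omega}$, then substitute the $p=2$ case of \eqref{Eq-NE}, namely $\|K_z\|_{A^2_\omega}\asymp\omega(z)^{-1/2}\tau(z)^{-1}$ and likewise for $\zeta$, and finally use $\tau(\zeta)\asymp\tau(z)$ from \eqref{eqn:asymptau}. This yields $|K_z(\zeta)|\asymp \omega(z)^{-1/2}\omega(\zeta)^{-1/2}\tau(z)^{-2}$. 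Dividing by $\|K_z\|_{A^p_\omega}\asymp\omega(z)^{-1/2}\tau(z)^{2/p-2}$ cancels the $\omega(z)^{-1/2}$ factor and leaves $|k_{p,z}(\zeta)|\asymp\omega(\zeta)^{-1/2}\tau(z)^{-2/p}$; raising to the power $p$ and multiplying by $\omega(\zeta)^{p/2}$ gives $\tau(z)^{-2}$, as required. For $p=\infty$ the same computation with \eqref{Eq-INF} gives $|k_{\infty,z}(\zeta)|\,\omega(\zeta)^{1/2}\asymp 1$.

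Since the argument is essentially exponent arithmetic, I do not expect a genuine obstacle. The one point requiring care is the near-diagonal step in (c): both the comparison $\tau(\zeta)\asymp\tau(z)$ and the two-sided estimate \eqref{RK-Diag} are available only for sufficiently small $\delta$, which is exactly why the statement restricts to $\delta\in(0,m_\tau)$. I would also verify that the implicit constants in Theorem~\ref{RK-PE} are uniform in $z$ (and, in (c), uniform in $\zeta\in D_\delta(z)$), so that the resulting $\asymp$ in each part is genuinely independent of $z$ and $\zeta$.
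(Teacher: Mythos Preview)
Your argument is correct, and it is exactly the natural one: parts (a) and (b) are pure algebra with the ratio $\|K_z\|_{A^q_\omega}/\|K_z\|_{A^p_\omega}$ computed from \eqref{Eq-NE}--\eqref{Eq-INF}, and part (c) follows by combining the near-diagonal estimate \eqref{RK-Diag} with \eqref{eqn:asymptau} and the $p=2$ norm asymptotics. Note that the paper itself does not supply a proof of this lemma---it is quoted from \cite{Hi-w}---so there is no in-paper argument to compare against; your route is the standard derivation and matches what the cited reference does.
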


\subsection{Test functions}\label{sec2:2}
It is known that having an appropriate family of test functions in a space of analytic functions X can help characterize the q-Carleson measures for X. In this
section we will do the job for the spaces $A^p_\omega.$
The following result on test functions was obtained in  \cite{PP1} and Lemma 3.3 in \cite{BDK} and we can refer also to Lemma C in \cite{Galano}. Without loss of generality, we modified the  original version by taking  $\omega(z)^{p/2}$ instead of $\omega(z),$ for the case $0<p<\infty.$ 
\begin{otherl}\label{Borichevlemma}\cite{PP2}:
Let $n\in\mathbb{N}\setminus\{0\}$ and $\omega\in \mathcal{W}$. There is a number $\rho_0 \in (0,1)$ such that for each $a\in \D$ with $|a|>\rho_0$
there is a function $F_{a,n}$ analytic in $\D$ with
\begin{equation}\label{BL1}
|F_{a,n}(z)|\,\omega(z)^{1/2}\asymp 1 \quad \textrm{ if }\quad
|z-a|<\tau(a),
\end{equation}
and
\begin{equation}\label{BL2}
|F_{a,n}(z)|\,\omega(z)^{1/2}\lesssim \min \left
(1,\frac{\min\big(\tau(a),\tau(z)\big)}{|z-a|} \right )^{3n}, \quad
z\in \D.
\end{equation}
Moreover,  
\begin{enumerate}
\item[(a)] For $0<p<\infty,$ the function $F_{a,n}$  belongs to $A^p(\omega)$  with $$\|F_{
a,n}\|_{A^p_\om}\asymp \tau(a)^{2/p}.$$
\item[(b)]  For $p=\infty,$ the function $F_{a,n}$  belongs to $A^\infty_{\omega}$  with $$\|F_{
a,n}\|_{A^\infty_{\omega}}\asymp 1.$$ 
\end{enumerate}
\end{otherl}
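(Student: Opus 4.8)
The plan is to construct $F_{a,n}$ by the standard recipe for rapidly decreasing weights: first build an explicit \emph{smooth} model that already obeys \eqref{BL1}--\eqref{BL2}, and then correct it to an analytic function by solving a $\bar\partial$-equation with a H\"ormander-type estimate in the weight $e^{-2\varphi}$. The fact that $\omega=e^{-2\varphi}$ is strictly subharmonic ($\Delta\varphi>0$) is what makes the correction step available, while the regularity built into $\mathcal{W}$ is what keeps the error pointwise negligible.

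First I would produce a local holomorphic approximant of $\omega^{-1/2}=e^{\varphi}$ near $a$. Since $\varphi\in C^2(\D)$, let $P_a$ be the holomorphic polynomial matching $\varphi$ to second order at $a$,
\[
P_a(z)=\varphi(a)+2\,\varphi_z(a)(z-a)+\varphi_{zz}(a)(z-a)^2 ,
\]
so that $\re P_a$ is harmonic and $\varphi-\re P_a$ vanishes together with its gradient at $a$. Because $\re P_a$ is harmonic, $\Delta(\varphi-\re P_a)=\Delta\varphi\asymp\tau^{-2}$, and the hypothesis $\tau'(r)\to 0$ forces $\tau(z)\asymp\tau(a)$, hence $\Delta\varphi(z)\asymp\tau(a)^{-2}$, throughout a fixed dilate $D(a,C\tau(a))$ once $|a|>\rho_0$ with $\rho_0$ close to $1$ (this is exactly the role of $\rho_0$, via \eqref{eqn:asymptau}). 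Integrating, $\varphi(z)-\re P_a(z)\asymp |z-a|^2/\tau(a)^2$ on $D(a,C\tau(a))$, so $|e^{P_a(z)}|\,\omega(z)^{1/2}\asymp 1$ for $|z-a|<\tau(a)$ and $|e^{P_a(z)}|\,\omega(z)^{1/2}\lesssim e^{-c|z-a|^2/\tau(a)^2}$ on the larger disc. Since $e^{-cm^2}\lesssim m^{-3n}$ for $m\ge 1$, this Gaussian decay already dominates the polynomial bound in \eqref{BL2} on $D(a,C\tau(a))$.

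Next I would localize and correct. Choose $\chi\in C_c^\infty\big(D(a,C\tau(a))\big)$ with $\chi\equiv 1$ on $D(a,2\tau(a))$ and $|\bar\partial\chi|\lesssim\tau(a)^{-1}$, and set the smooth model $s_a=\chi\,e^{P_a}$. By the previous step $s_a$ satisfies \eqref{BL1} and \eqref{BL2} (it vanishes outside $D(a,C\tau(a))$, where the right-hand side of \eqref{BL2} is bounded below by a positive constant). Its defect $\bar\partial s_a=(\bar\partial\chi)\,e^{P_a}$ is supported in the annulus $2\tau(a)\le|z-a|\le C\tau(a)$, where $|e^{P_a}|\,\omega^{1/2}$ is already exponentially small, so
\[
\int_\D|u|^2\,\omega\,dA\ \lesssim\ \int_\D\frac{|\bar\partial s_a|^2}{\Delta\varphi}\,\omega\,dA
\]
is small and carries a factor $\tau(a)^2$ (area of the annulus times $(\Delta\varphi)^{-1}\asymp\tau(a)^2$ times $\tau(a)^{-2}$). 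Solving $\bar\partial u=\bar\partial s_a$ by H\"ormander's estimate in the weight $2\varphi$ yields such a $u$, and I would pass from this $L^2$ bound to the pointwise estimate $|u(z)|\,\omega(z)^{1/2}\lesssim\min\!\big(1,\tau(a)/|z-a|\big)^{3n}$ by applying the sub-mean-value inequality of Lemma \ref{lem:subHarmP} to the analytic function $u=s_a-F_{a,n}$ on the discs $D_\delta(z)$. Setting $F_{a,n}=s_a-u$ gives an analytic function; since $u$ is pointwise negligible compared with $s_a$ on $D(a,\tau(a))$ and respects the tail estimate elsewhere, $F_{a,n}$ inherits \eqref{BL1} and \eqref{BL2}.

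Finally, the norm estimates (a)--(b) follow by integrating the pointwise bounds. The lower bound is immediate from \eqref{BL1}: $\|F_{a,n}\|_{A^p_\omega}^p\gtrsim\int_{D(a,\tau(a))}dA\asymp\tau(a)^2$, and likewise $\|F_{a,n}\|_{A^\infty_\omega}\gtrsim 1$. For the upper bound I would integrate \eqref{BL2}; splitting $\D$ into $D(a,\tau(a))$ and its complement and using the Lipschitz bound $\min(\tau(a),\tau(z))\le\tau(a)$ reduces matters to $\int_{|z-a|>\tau(a)}\big(\tau(a)/|z-a|\big)^{3np}\,dA\asymp\tau(a)^2$, valid once $3np>2$ (the regime in which the test functions are used), whence $\|F_{a,n}\|_{A^p_\omega}\asymp\tau(a)^{2/p}$; the case $p=\infty$ reads off directly from \eqref{BL1}--\eqref{BL2}. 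I expect the $\bar\partial$-correction to be the main obstacle: one must upgrade H\"ormander's $L^2$ estimate to pointwise control on $u$ that simultaneously preserves the lower bound $\asymp 1$ near $a$ and the global order-$3n$ decay, uniformly as $|a|\to 1$ with $\tau(a)\to 0$. The remaining steps — the quadratic expansion of $\varphi$ and the integration of the bump — are routine.
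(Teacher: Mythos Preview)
The paper does not prove this lemma; it is quoted from \cite{PP2} (with the original construction going back to \cite{BDK}, Lemma~3.3), so there is no in-paper argument to compare against. Your sketch follows the standard $\bar\partial$-correction recipe from \cite{BDK}, which is indeed how the cited sources build $F_{a,n}$: holomorphic quadratic approximation $P_a$ of $\varphi$ at $a$, cut-off, and H\"ormander solution of $\bar\partial u=\bar\partial(\chi e^{P_a})$.

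That said, one concrete step is missing, and it is precisely the one you flag as the obstacle. From the plain H\"ormander estimate $\int_\D |u|^2\omega\,dA\lesssim\int_\D|\bar\partial s_a|^2(\Delta\varphi)^{-1}\omega\,dA$ together with Lemma~\ref{lem:subHarmP} you only get $|u(z)|\,\omega(z)^{1/2}\lesssim \tau(z)^{-1}\|u\|_{L^2_\omega}$, which is enough near $a$ (where $\tau(z)\asymp\tau(a)$) but does \emph{not} produce the $\big(\min(\tau(a),\tau(z))/|z-a|\big)^{3n}$ tail in \eqref{BL2}. In \cite{BDK} this is handled by solving $\bar\partial$ against the \emph{modified} subharmonic weight
\[
2\varphi(z)+3n\log\!\Big(1+\frac{|z-a|^2}{\tau(a)^2}\Big),
\]
whose Laplacian is still bounded below by a multiple of $\Delta\varphi$; the extra factor $\big(1+|z-a|^2/\tau(a)^2\big)^{-3n}$ then survives sub-mean-value and yields the required polynomial decay. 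Without this weight modification (or an equivalent device) the $3n$-th order tail cannot be extracted from the global $L^2$ bound. Your treatment of the norm estimates in (a)--(b), once \eqref{BL1}--\eqref{BL2} are in hand, is correct.
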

As a consequence we have the following  pointwise estimates for the derivative of the test functions $F_{a,n}.$
\begin{lemma}\label{BL3}
Let $n\in\mathbb{N}\setminus\{0\}$ and $\omega\in \mathcal{W}$.
For any $\delta>0$ small enough,
\begin{equation}
|F'_{a,n}(z)|\,\omega(z)^{1/2}\asymp 1+\varphi'(z), \quad z \in D_{\delta}(a).
\end{equation}
\end{lemma}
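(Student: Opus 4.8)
The plan is to reduce the statement to the single estimate $|F_{a,n}'(z)|\,\omega(z)^{1/2}\asymp\varphi'(a)$ on $D_\delta(a)$ and then invoke the local regularity of the weight. Indeed, by \eqref{eqn:asymptau} we have $\tau(z)\asymp\tau(a)$, and by statement $(d)$ of \cite[Lemma 32]{Co-Pe} we have $\varphi'(z)\asymp\varphi'(a)$ for $z\in D_\delta(a)$ once $|a|$ is close enough to $1$; since $\varphi'(a)\to\infty$ as $|a|\to1^-$, this turns $\varphi'(a)$ into $1+\varphi'(z)$ and yields the asserted two-sided bound. To produce the estimate I would first flatten the weight near $a$ by a holomorphic first-order term. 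Writing the real-linear Taylor expansion of the radial function $\varphi$ at $a$ and complexifying it, set
\[
p_a(z)=-\varphi(a)-\varphi'(|a|)\,\frac{\bar a}{|a|}\,(z-a),
\]
so that $\re p_a(z)=-\varphi(a)-\langle\nabla\varphi(a),z-a\rangle$, the constant $p_a'\equiv-\varphi'(|a|)\bar a/|a|$ has modulus $\varphi'(a)$, and the second-order Taylor remainder gives $|\re p_a(z)+\varphi(z)|\lesssim\sup_{D_\delta(a)}|\mathrm{Hess}\,\varphi|\cdot(\delta\tau(a))^2\lesssim\delta^2$, because all second derivatives of $\varphi$ are $\lesssim\Delta\varphi\asymp\tau^{-2}$ on $D_\delta(a)$.

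Next I would transfer the estimate to the entire function $H:=F_{a,n}\,e^{p_a}\in H(\D)$. Since $|e^{p_a(z)}|=e^{\re p_a(z)}=\omega(z)^{1/2}e^{O(\delta^2)}$, the lower bound \eqref{BL1} (valid on $D_\delta(a)\subset\{|z-a|<\tau(a)\}$ for $\delta<1$) gives $|H(z)|=|F_{a,n}(z)|\,\omega(z)^{1/2}\,e^{O(\delta^2)}\asymp1$ on a disc slightly larger than $D_\delta(a)$. A Cauchy estimate on circles of radius $\asymp\delta\tau(a)$ then yields $|H'(z)|\lesssim1/\tau(a)$ throughout $D_\delta(a)$. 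Differentiating the identity $F_{a,n}=H\,e^{-p_a}$ gives $F_{a,n}'=e^{-p_a}\,(H'-H\,p_a')$, and since $|e^{-p_a(z)}|\,\omega(z)^{1/2}\asymp1$ we obtain
\[
|F_{a,n}'(z)|\,\omega(z)^{1/2}\asymp|H'(z)-H(z)\,p_a'|,\qquad z\in D_\delta(a).
\]
Here $|H(z)\,p_a'|=|H(z)|\,\varphi'(a)\asymp\varphi'(a)$ is the main term, while $|H'(z)|\lesssim1/\tau(a)$ is to be treated as an error.

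It remains to show that the error is of lower order, i.e.\ that $1/\tau(a)=o(\varphi'(a))$, equivalently $\tau(a)\varphi'(a)\to\infty$ as $|a|\to1^-$; this is the main obstacle and the only place where the finer structure of the class $\mathcal W$ enters. It follows from the hypotheses defining $\mathcal W$: writing $\varphi''\asymp\Delta\varphi\asymp\tau^{-2}$ and $\tau\asymp(\varphi'')^{-1/2}$, the condition $\tau'(r)\to0$ forces $\tau\varphi'\to\infty$ (for instance, for $\tau\asymp(1-r)^s$ one has $\tau'\to0\iff s>1\iff\tau\varphi'\asymp(1-r)^{(1-s)/2}\to\infty$). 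Granting this, for $|a|$ close to $1$ the term $|H'|\lesssim1/\tau(a)$ is dominated by $\tfrac12|H\,p_a'|$, so that
\[
\tfrac12\,\varphi'(a)\lesssim|H'(z)-H(z)\,p_a'|\lesssim\varphi'(a),
\]
and therefore $|F_{a,n}'(z)|\,\omega(z)^{1/2}\asymp\varphi'(a)\asymp1+\varphi'(z)$ on $D_\delta(a)$, as required. The routine computations are the Taylor/Hessian bound for $p_a$ and the Cauchy estimate for $H'$; the conceptual crux is the scale separation $\tau(a)\varphi'(a)\to\infty$, which guarantees that the holomorphic first-order coefficient $p_a'$, rather than the curvature-induced error $H'$, governs the size of the derivative.
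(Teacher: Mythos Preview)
The paper does not actually prove this lemma; it is stated without argument as a ``consequence'' of Lemma~F (the Borichev--Dhuez--Kellay test-function lemma). So there is nothing in the paper to compare your route against, and your argument has to be judged on its own.

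Your approach is correct. Flattening the weight via the holomorphic affine map $p_a$, setting $H=F_{a,n}e^{p_a}$ so that $|H|\asymp1$ near $a$ by \eqref{BL1}, bounding $|H'|\lesssim\tau(a)^{-1}$ by a Cauchy estimate, and then reading off $|F_{a,n}'|\,\omega^{1/2}\asymp|H'-Hp_a'|$ is exactly the right decomposition, and it yields both inequalities once the error $|H'|$ is absorbed into the main term $|Hp_a'|\asymp\varphi'(a)$. The crux, as you say, is the scale separation $\tau(a)\varphi'(a)\to\infty$. Your heuristic that this follows from $\tau'(r)\to0$ is right but deserves an actual argument rather than a power-law example: since $(r\varphi')'=r\Delta\varphi\gtrsim\tau^{-2}$ for $r$ near $1$, one has $\varphi'(r)\gtrsim\int_{r_\epsilon}^{r}\tau(s)^{-2}\,ds$; then $|\tau'|<\epsilon$ on $[r_\epsilon,1)$ gives $\tau(s)\le\tau(r)+\epsilon(r-s)$, so the integral is $\gtrsim(\epsilon\,\tau(r))^{-1}$, whence $\tau(r)\varphi'(r)\gtrsim\epsilon^{-1}$, and letting $\epsilon\downarrow0$ finishes. (Your power-law sanity check contains an arithmetic slip---one gets $\tau\varphi'\asymp(1-r)^{1-s}$, not $(1-r)^{(1-s)/2}$---but the conclusion is unaffected.)

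Two routine points to spell out: the Hessian bound $|\mathrm{Hess}\,\varphi|\lesssim\Delta\varphi$ follows because $\varphi'(r)\lesssim\int^{r}\Delta\varphi\le (r-r_0)\,\tau(r)^{-2}\lesssim\tau(r)^{-2}$ (using that $\tau$ is decreasing), so both eigenvalues $\varphi''$ and $\varphi'/r$ of the radial Hessian are $O(\tau^{-2})$; and for the Cauchy estimate you need $|H|\lesssim1$ on a disc strictly larger than $D_\delta(a)$, so invoke \eqref{BL1} on $D_{2\delta}(a)\subset\{|z-a|<\tau(a)\}$ for $\delta$ small. With these in place the proof is complete for $|a|$ close enough to $1$, which is all that is needed since $F_{a,n}$ is only defined for $|a|>\rho_0$.
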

The next Proposition is some partial result about  the  atomic decomposition on $A^p_\om$ and its proof follows  easily from Lemma \ref{Borichevlemma}.
\begin{proposition}\label{At1-pp}\cite{PP2}:
Let $n\geq 2$  and $\omega \in \mathcal{W}.$ Let  $\{z_k\}_{k\in\mathbb{N}}\subset\D$ be the sequence  defined in Lemma \ref{lem:Lcoverings}.
\begin{enumerate}
\item[(a)] For  $0 < p < \infty,$ the function given by
$$ F(z): = \sum_{\substack{k = 0}}^{\infty}\lambda_k \,\,\frac{F_{z_k,n}(z)}{\tau(z_k)^{2/p}}$$ belongs to $A^p_\om$ for every sequence $\lambda=\lbrace\lambda_k\rbrace\in \ell^{p}$ . Moreover,$$\|F\|_{A^p_\om}\lesssim \|\lambda \|_{\ell^p}.$$
\item[(b)] For $p=\infty,$ the function given by
 $$ F(z): = \sum_{\substack{k = 0}}^{\infty}\lambda_k \,F_{z_k,n}(z)$$ belongs to $A^\infty_\om$ for every sequence $\lambda=\lbrace\lambda_k\rbrace\in \ell^{\infty}$ . Moreover,$$\|F\|_{A^\infty_{\omega}}\lesssim \|\lambda \|_{\ell^{\infty}}.$$
\end{enumerate}
\end{proposition}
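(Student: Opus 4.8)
The plan is to realise $F$ as the image of $\lambda$ under the synthesis operator $S\lambda:=\sum_k\lambda_k F_{z_k,n}/\tau(z_k)^{2/p}$ and to prove that $S$ maps $\ell^p$ boundedly into $A^p_\om$; the whole argument rests on the two pointwise facts supplied by Lemma \ref{Borichevlemma}, namely the decay bound \eqref{BL2} and the norm identity $\|F_{z_k,n}\|_{A^p_\om}\asymp\tau(z_k)^{2/p}$. Throughout I abbreviate the majorant in \eqref{BL2} by $h_k(z):=\min\bigl(1,\min(\tau(z_k),\tau(z))/|z-z_k|\bigr)^{3n}$, so that $|F_{z_k,n}(z)|\,\om(z)^{1/2}\lesssim h_k(z)$ and, by the triangle inequality, $|F(z)|\,\om(z)^{1/2}\lesssim\sum_k|\lambda_k|\,\tau(z_k)^{-2/p}h_k(z)$.

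The case $0<p\le1$ is immediate. Using subadditivity of $t\mapsto t^p$ and then part (a) of Lemma \ref{Borichevlemma},
\[
\|F\|_{A^p_\om}^p\le\sum_k|\lambda_k|^p\,\tau(z_k)^{-2}\,\|F_{z_k,n}\|_{A^p_\om}^p\asymp\sum_k|\lambda_k|^p=\|\lambda\|_{\ell^p}^p,
\]
which is the claim. The endpoint $p=\infty$ is equally short: taking the supremum in $|F(z)|\,\om(z)^{1/2}\le\|\lambda\|_{\ell^\infty}\sum_k h_k(z)$ reduces everything to the uniform estimate
\begin{equation}\label{keysum}
\sup_{z\in\D}\sum_k h_k(z)\lesssim1.
\end{equation}

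The main work is the range $1<p<\infty$. Here I would first split each summand as $\tau(z_k)^{-2/p}h_k=\bigl(\tau(z_k)^{-2/p}h_k^{1/p}\bigr)h_k^{1/p'}$ and apply Hölder's inequality in $k$ with exponents $p,p'$, obtaining
\[
\Bigl(\sum_k|\lambda_k|\tau(z_k)^{-2/p}h_k(z)\Bigr)^{p}\le\Bigl(\sum_k|\lambda_k|^p\tau(z_k)^{-2}h_k(z)\Bigr)\Bigl(\sum_k h_k(z)\Bigr)^{p/p'}.
\]
Invoking \eqref{keysum} to discard the second factor and then integrating in $z$, Tonelli's theorem leaves $\|F\|_{A^p_\om}^p\lesssim\sum_k|\lambda_k|^p\tau(z_k)^{-2}\int_\D h_k\,dA$. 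It therefore remains to prove the companion bound $\int_\D h_k(z)\,dA(z)\lesssim\tau(z_k)^2$, which is an elementary computation: since $\min(\tau(z_k),\tau(z))\le\tau(z_k)$, the integrand is dominated by $\min(1,\tau(z_k)/|z-z_k|)^{3n}$, and the substitution $w=z-z_k$ reduces the integral to a radial one equal to a constant times $\tau(z_k)^2$ precisely because $3n>2$. Combining the two estimates gives $\|F\|_{A^p_\om}^p\lesssim\|\lambda\|_{\ell^p}^p$.

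Thus both parts come down to the lattice sum \eqref{keysum}, and this is the step I expect to be the main obstacle. I would prove it using the covering Lemma \ref{lem:Lcoverings}: since $\tau(w)\asymp\tau(z_k)$ and $|z-w|\asymp|z-z_k|$ for $w\in D_\delta(z_k)$ (away from the trivial region $|z-z_k|\lesssim\tau(z_k)$, where only finitely many indices contribute by the finite multiplicity of the covering), one has, for fixed $\delta$, the comparison $h_k(z)\asymp\tau(z_k)^{-2}\int_{D_\delta(z_k)}\min(1,\tau(w)/|z-w|)^{3n}\,dA(w)$; summing and using that $\{D_{3\delta}(z_n)\}$ has multiplicity at most $N$ converts $\sum_k h_k(z)$ into a constant multiple of $\int_\D\min(1,\tau(w)/|z-w|)^{3n}\,d\lambda(w)$ with $d\lambda=dA/\tau^2$. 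The remaining point is to verify that this integral is bounded uniformly in $z$, which again uses $3n>2$ together with the Lipschitz property (B) and the growth bound (A) of $\tau\in\mathcal L$ to control the contribution of the region far from $z$; the normalisation $n\ge2$ gives $3n\ge6$, leaving ample room. Once \eqref{keysum} is in hand, the convergence of the defining series, and hence the analyticity of $F$, follows from the same tail estimates applied to the partial sums, completing the proof.
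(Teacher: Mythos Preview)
The paper does not actually prove this proposition: it is quoted from \cite{PP2} with the single remark ``its proof follows easily from Lemma \ref{Borichevlemma}'', so there is no in-paper argument to compare your proposal against.  Your outline is the standard route used in \cite{PP2}: subadditivity for $0<p\le1$, the H\"older splitting $h_k=h_k^{1/p}h_k^{1/p'}$ for $1<p<\infty$, and the lattice-sum estimate \eqref{keysum} as the common engine, together with $\int_\D h_k\,dA\lesssim\tau(z_k)^2$.  All of this is correct.

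The only place that deserves one more explicit line is the final uniform bound on
\[
\int_\D\min\!\Bigl(1,\frac{\min(\tau(w),\tau(z))}{|z-w|}\Bigr)^{3n}\frac{dA(w)}{\tau(w)^2}.
\]
After discarding the region $|z-w|\lesssim\tau(z)$ (where $\tau(w)\asymp\tau(z)$ by property~(B), so the contribution is $O(1)$), split the rest according to whether $\tau(w)\ge\tau(z)$ or $\tau(w)<\tau(z)$.  In the first case bound $\min(\tau(w),\tau(z))\le\tau(z)$ and $\tau(w)^{-2}\le\tau(z)^{-2}$; in the second bound $\min(\tau(w),\tau(z))\le\tau(w)$ and use $3n-2>0$.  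Either way the integrand is dominated by $\tau(z)^{3n-2}|z-w|^{-3n}$, and the radial integral over $\{|z-w|>c\tau(z)\}$ is $O(1)$ since $3n>2$.  With this clarification your argument is complete and matches \cite{PP2}.
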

\subsection{Geometric characterizations of Carleson measures}
Let $\mu$ be  a positive measure on $\D.$ Denote by $\widehat{\mu_{\delta}}$  the averaging function defined as
$$\widehat{\mu_{\delta}}(z)= \mu(D_{\delta}(z))\cdot\tau(z)^{-2}, \quad z\in \D,$$ 
and also a general Berezin transform of $\mu$  given by 
$$G_{t}(\mu)(z) = \int_{\D} |k_{t,z}(\zeta)|^t\, \om(\zeta)^{t/2}\, d\mu(\zeta), $$ for every $t>0$ and $z\in \D.$

In this section we recall recent characterizations of $q$-Carleson measures for $A^p_\omega$ for any $0 <p, q\leq\infty$ in terms of the averaging function $\widehat{\mu_{\delta}}$ and the general Berezin transform $G_t(\mu)$. For the proofs of all theorems in this section, see Section 3 of \cite{Hi-w}.
\subsubsection{Carleson measures }\label{sec3:1}
 We begin with the definition of $q$-Carleson measures.
\begin{definition}
Let $\mu$ be a positive measure on $\D$ and fix  $0< p,q< \infty.$ We say  that  $\mu$ is  a $q$-Carleson measure for $A^p_{\om}$  if the embdding operator $I_{\mu}:A^p_{\omega}\longrightarrow L^q_\omega$ is bounded. 
That is, $$\| I_\mu f\|_{L_\omega^q} \lesssim \|f\|_{A^p_\omega},$$ for $f\in A^p_{\omega}$ where $I_{\mu}$ is the identity and the expression $L_\omega^q$ mean $L_{\omega}^{q}(d\mu) :=L^q(\mathbb{D},\omega^{q/2}d\mu)$.
\end{definition}

The following theorem  characterizes the  $q$-Carleson measures when $0 < p\le q.$
\begin{otherth}\label{thm:CMPQ}
Let $\mu$  be a finite positive Borel measure on $\D.$ Assume $0<p \le q<\infty,$ $s=p/q,$ $1/s < t< \infty.$ The following conditions are  all equivalent:
\begin{enumerate}
\item[{\rm (a)}]  $\mu$ is a $q$-Carleson measure for $A^p_\omega;$
\item[{\rm (b)}]  The function  $$ \tau(z)^{2(1-1/s)}G_{t}(\mu)(z)$$ belongs to   $L^{\infty}(\D, dA).$ 
 \item[{\rm (c)}] The function  $$\tau(z)^{2(1-1/s)}\widehat{\mu_{\delta}}(z)$$ belongs to $L^{\infty}(\D, dA)$ for any small enough $\delta>0. $
\end{enumerate}
\end{otherth}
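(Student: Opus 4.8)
The plan is to prove the implications $(c)\Rightarrow(a)\Rightarrow(c)$ together with the equivalence $(b)\Leftrightarrow(c)$, which close the full cycle. Three ingredients drive the argument: the covering of $\D$ by a $(\delta,\tau)$-lattice (Lemma \ref{lem:Lcoverings}), the submean-value inequality (Lemma \ref{lem:subHarmP}), and the reproducing kernel estimates (Lemmas \ref{nEstim} and \ref{lem:RK-PE1}). Throughout we exploit that $1/s=q/p\ge1$ since $0<p\le q$.

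For $(c)\Rightarrow(a)$, fix $f\in A^p_\om$ and a $(\delta,\tau)$-lattice $\{z_n\}$. I would split
\[
\int_\D|f|^q\om^{q/2}\,d\mu\le\sum_n\mu\big(D_\delta(z_n)\big)\,\sup_{D_\delta(z_n)}\big(|f|^q\om^{q/2}\big),
\]
rewrite $\sup_{D_\delta(z_n)}(|f|^q\om^{q/2})=\big(\sup_{D_\delta(z_n)}|f|^p\om^{p/2}\big)^{q/p}$, and bound the inner supremum by Lemma \ref{lem:subHarmP} (with $\beta=p/2$) by $\tau(z_n)^{-2}\int_{\widetilde{D}_\delta(z_n)}|f|^p\om^{p/2}\,dA$. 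Hypothesis $(c)$ gives $\mu(D_\delta(z_n))\lesssim\tau(z_n)^{2q/p}$, which cancels the factor $\tau(z_n)^{-2q/p}$; the superadditivity $\sum_n a_n^{q/p}\le(\sum_n a_n)^{q/p}$ (valid as $q/p\ge1$) and the finite multiplicity from Lemma \ref{lem:Lcoverings} then yield $\int_\D|f|^q\om^{q/2}\,d\mu\lesssim\|f\|_{A^p_\om}^q$.

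For $(a)\Rightarrow(c)$, I would test the Carleson inequality on the normalized functions $f_a=F_{a,n}/\|F_{a,n}\|_{A^p_\om}$ of Lemma \ref{Borichevlemma}, which satisfy $\|f_a\|_{A^p_\om}\asymp1$ and, for $\delta$ small enough that $D_\delta(a)\subset\{|\zeta-a|<\tau(a)\}$, $|f_a(\zeta)|\om(\zeta)^{1/2}\asymp\tau(a)^{-2/p}$ on $D_\delta(a)$. Restricting the embedding bound $\int_\D|f_a|^q\om^{q/2}\,d\mu\lesssim1$ to $D_\delta(a)$ gives $\tau(a)^{-2q/p}\mu(D_\delta(a))\lesssim1$, that is $\tau(a)^{2(1-1/s)}\widehat{\mu_\delta}(a)\lesssim1$, which is $(c)$.

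The equivalence $(b)\Leftrightarrow(c)$ is handled through the kernel. The direction $(b)\Rightarrow(c)$ is immediate: by the on-diagonal estimate (Lemma \ref{lem:RK-PE1}(c)), $|k_{t,z}(\zeta)|^t\om(\zeta)^{t/2}\asymp\tau(z)^{-2}$ for $\zeta\in D_\delta(z)$, so $G_t(\mu)(z)\ge\int_{D_\delta(z)}|k_{t,z}|^t\om^{t/2}\,d\mu\gtrsim\widehat{\mu_\delta}(z)$. The reverse, $(c)\Rightarrow(b)$, is where I expect the main difficulty. I would expand $G_t(\mu)(z)$ over the lattice, bound the local supremum of $|k_{t,z}|^t\om^{t/2}$ by its average via Lemma \ref{lem:subHarmP}, insert $(c)$ to replace $\widehat{\mu_\delta}(z_n)$ by $\tau(z_n)^{-2(1-1/s)}$, and reassemble (using $\tau(z_n)\asymp\tau(\zeta)$ on $\widetilde{D}_\delta(z_n)$ and finite multiplicity) into the weighted integral $\int_\D\tau(\zeta)^{-2(1-1/s)}|k_{t,z}(\zeta)|^t\om(\zeta)^{t/2}\,dA(\zeta)$. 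Writing $|k_{t,z}|^t=|K_z|^t/\|K_z\|_{A^t_\om}^t$ with $\|K_z\|_{A^t_\om}^t\asymp\om(z)^{-t/2}\tau(z)^{2(1-t)}$ from Theorem \ref{RK-PE}, and applying Lemma \ref{nEstim} with $\beta=-2(1-1/s)$, the powers of $\tau$ collapse to exactly $\tau(z)^{-2(1-1/s)}$, giving $(b)$. The delicate point is controlling this full kernel integral against the singular weight $\tau^{-2(1-1/s)}$; the hypothesis $1/s<t$ is what guarantees the off-diagonal decay of $|k_{t,z}|$ is strong enough for the reassembly into an integral and for Lemma \ref{nEstim} to produce the correct power of $\tau(z)$.
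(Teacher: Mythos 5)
Your proof is correct and follows essentially the same route as the source this paper relies on: the paper itself gives no proof of Theorem~\ref{thm:CMPQ} but defers it to Section~3 of \cite{Hi-w}, and the argument there is precisely your cycle --- (c)$\Rightarrow$(a) via the $(\delta,\tau)$-lattice of Lemma~\ref{lem:Lcoverings}, the submean-value estimate of Lemma~\ref{lem:subHarmP}, and superadditivity of $x\mapsto x^{q/p}$; (a)$\Rightarrow$(c) via the test functions of Lemma~\ref{Borichevlemma}; and (b)$\Leftrightarrow$(c) via the diagonal estimate of Lemma~\ref{lem:RK-PE1}(c) in one direction and the lattice reassembly together with Lemma~\ref{nEstim} in the other. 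Two minor points to patch: the test functions $F_{a,n}$ exist only for $|a|>\rho_0$, so in (a)$\Rightarrow$(c) the range $|a|\le\rho_0$ needs the trivial remark that $\mu$ is finite and $\tau$ is bounded below on compact subsets of $\D$; and, contrary to your closing comment, your (c)$\Rightarrow$(b) never actually invokes the hypothesis $1/s<t$, since Lemma~\ref{nEstim} and Lemma~\ref{lem:RK-PE1}(c) are stated for every exponent and every $\beta\in\R$, so in your route that hypothesis is simply idle rather than the engine of the off-diagonal control.
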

Now we charaterize $q$-Carleson measure for the case $0< q<p<\infty.$  

\begin{otherth}\label{thm:CMOP}
 Let $\mu$   be a finite positive Borel measure on $\D.$ Assume $0< q < p<\infty$  and $s=p/q.$  The following conditions are  all equivalent:
\begin{enumerate}
\item[{\rm (a)}]   $\mu$ is a $q$-Carleson measure for $A^p_\om$;
 \item[{\rm (b)}]   For any  (or some ) $r>0$, we have 
$$\widehat{\mu_{r}} \in L^{p/(p-q)}(\D, dA).$$ 
\item[{\rm (c)}] For any $t>1,$ $$ G_{t}(\mu)\in L^{p/(p-q)}(\D, dA).$$
\end{enumerate}
\end{otherth}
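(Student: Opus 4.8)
The plan is to reduce all three conditions to a single discrete summability condition over a $(\delta,\tau)$-lattice $\{z_n\}$ from Lemma~\ref{lem:Lcoverings}, and then to establish the two genuinely analytic implications by a sufficiency argument based on subharmonicity and a necessity argument of Luecking type. First I would record that, by the covering properties of Lemma~\ref{lem:Lcoverings} together with the comparability \eqref{eqn:asymptau}, condition (b) is equivalent --- independently of the radius, which settles the ``any or some $r$'' clause --- to the discrete statement
\begin{equation*}
\sum_n \big(\mu(D_\delta(z_n))\,\tau(z_n)^{-2}\big)^{p/(p-q)}\tau(z_n)^2 = \sum_n \mu(D_\delta(z_n))^{p/(p-q)}\,\tau(z_n)^{-2q/(p-q)} < \infty ,
\end{equation*}
which I denote $(\star)$. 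This uses that $\int_\D \widehat{\mu_\delta}^{\,p/(p-q)}\,dA \asymp \sum_n \widehat{\mu_\delta}(z_n)^{p/(p-q)}\tau(z_n)^2$, since $\widehat{\mu_\delta}$ is comparable to a constant on each $D_\delta(z_n)$ and $|D_\delta(z_n)|\asymp\tau(z_n)^2$.

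For the sufficiency $(b)\Rightarrow(a)$, I would fix $f\in A^p_\om$ and estimate $\int_\D |f|^q \om^{q/2}\,d\mu$ by splitting $\D=\bigcup_n D_\delta(z_n)$. On each disc I apply the subharmonic-type estimate of Lemma~\ref{lem:subHarmP} to the quantity $|f|^p\om^{p/2}$, then raise to the power $q/p$, to get $|f(z)|^q\om(z)^{q/2}\lesssim \big(\tau(z_n)^{-2}\int_{D_{3\delta}(z_n)}|f|^p\om^{p/2}\,dA\big)^{q/p}$ for $z\in D_\delta(z_n)$. Integrating against $d\mu$ and summing yields
\begin{equation*}
\int_\D |f|^q\om^{q/2}\,d\mu \lesssim \sum_n \mu(D_\delta(z_n))\,\tau(z_n)^{-2q/p}\, c_n^{q/p}, \qquad c_n := \int_{D_{3\delta}(z_n)} |f|^p\om^{p/2}\,dA .
\end{equation*}
Hölder's inequality with exponents $p/q$ and $p/(p-q)$ separates the two sums: the factor $(\sum_n c_n)^{q/p}$ is controlled by $\|f\|_{A^p_\om}^q$ because the $D_{3\delta}(z_n)$ have finite overlap, while the remaining factor is exactly $(\star)^{(p-q)/p}$, finite by (b).

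The hard part is the necessity $(a)\Rightarrow(b)$, which I would prove by randomization and duality. Given $\{\lambda_n\}\in\ell^p$ and Rademacher functions $\varepsilon_n(t)$, form $f_t=\sum_n \varepsilon_n(t)\lambda_n \tau(z_n)^{-2/p} F_{z_n,N}$ with $N\ge 2$ fixed; by Proposition~\ref{At1-pp} these lie in $A^p_\om$ with $\|f_t\|_{A^p_\om}\lesssim\|\lambda\|_{\ell^p}$ uniformly in $t$. Applying the Carleson inequality, integrating in $t$, and using Khinchine's inequality inside the integral to replace $\int_0^1|f_t(z)|^q\,dt$ by $\big(\sum_n|\lambda_n|^2|F_{z_n,N}(z)|^2\tau(z_n)^{-4/p}\big)^{q/2}$, I bound the left side from below by keeping only the diagonal term on each $D_\delta(z_n)$, where $|F_{z_n,N}|^2\om\asymp1$ by \eqref{BL1}. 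This gives $\sum_n|\lambda_n|^q\,\tau(z_n)^{-2q/p}\mu(D_\delta(z_n))\lesssim(\sum_n|\lambda_n|^p)^{q/p}$; writing $c_n=|\lambda_n|^q$ and $a_n=\tau(z_n)^{-2q/p}\mu(D_\delta(z_n))$, this says $\{a_n\}$ pairs boundedly with every nonnegative $\{c_n\}\in\ell^{p/q}$, so duality ($(p/q)'=p/(p-q)$) forces $\{a_n\}\in\ell^{p/(p-q)}$, which is $(\star)$. The delicate points are the uniform control of the random test functions and the bookkeeping of the $\tau(z_n)$-powers through Khinchine and duality.

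Finally, for $(b)\Leftrightarrow(c)$ I would compare the averaging function with the Berezin-type transform $G_t(\mu)$. The implication $(c)\Rightarrow(b)$ is immediate from the on-diagonal estimate \eqref{eqn:RK-Diag1}: integrating only over $D_\delta(z)$ gives $\widehat{\mu_\delta}(z)\lesssim G_t(\mu)(z)$ pointwise, so $G_t(\mu)\in L^{p/(p-q)}(dA)$ forces $\widehat{\mu_\delta}\in L^{p/(p-q)}(dA)$. For $(b)\Rightarrow(c)$ I decompose $G_t(\mu)(z)=\sum_n\int_{D_\delta(z_n)}|k_{t,z}|^t\om^{t/2}\,d\mu$ and use comparability of $|k_{t,z}(\zeta)|$ with $|k_{t,z}(z_n)|$ on $D_\delta(z_n)$ to bound $G_t(\mu)$ by the discrete operator with kernel $|k_{t,z}(z_n)|^t\om(z_n)^{t/2}\tau(z_n)^2$ acting on $\{\widehat{\mu_\delta}(z_n)\}$. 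Boundedness of this operator on the weighted $\ell^{p/(p-q)}$ space then follows from Schur's test, the required off-diagonal decay and summability of the kernel being supplied by Lemma~\ref{nEstim}; this Schur estimate is the main technical cost of $(b)\Leftrightarrow(c)$. Together these implications close the equivalences $(a)\Leftrightarrow(b)\Leftrightarrow(c)$.
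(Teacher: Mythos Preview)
The paper does not prove this theorem itself: immediately before the Carleson-measure subsection it says ``For the proofs of all theorems in this section, see Section~3 of \cite{Hi-w}'', so Theorem~\ref{thm:CMOP} is quoted from that reference and there is no in-paper argument to compare against. Your outline is exactly the standard route one finds in that setting (Luecking randomization for the necessity $(a)\Rightarrow(b)$, subharmonicity plus H\"older for the sufficiency $(b)\Rightarrow(a)$, and kernel estimates for $(b)\Leftrightarrow(c)$), and it is essentially correct.

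Two small technical points are worth tightening. First, in $(a)\Rightarrow(b)$, when you ``keep only the diagonal term on each $D_\delta(z_n)$'' and then sum, the discs $D_\delta(z_n)$ overlap, so the summed lower bounds need not be dominated by the single integral over $\D$; use instead the pairwise disjoint discs $D_{\delta/4}(z_n)$ guaranteed by property~(i) of Lemma~\ref{lem:Lcoverings}, on which \eqref{BL1} still gives $|F_{z_n,N}|\om^{1/2}\asymp1$. Second, in $(b)\Rightarrow(c)$ the asserted pointwise ``comparability of $|k_{t,z}(\zeta)|$ with $|k_{t,z}(z_n)|$ on $D_\delta(z_n)$'' is not available in general, since $K_z$ may vanish off the diagonal. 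The clean fix is to apply Lemma~\ref{lem:subHarmP} to $\zeta\mapsto K_z(\zeta)$, which (after Fubini) yields
\[
G_t(\mu)(z)\ \lesssim\ \int_\D |k_{t,z}(\zeta)|^t\,\omega(\zeta)^{t/2}\,\widehat{\mu_{3\delta}}(\zeta)\,dA(\zeta),
\]
and then Schur's test with the moment estimate of Lemma~\ref{nEstim} gives the $L^{p/(p-q)}(dA)$-boundedness of this integral operator, exactly as you indicate. With these adjustments your proof goes through.
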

\subsubsection{Vanishing Carleson measures}\label{sec3:2}
\begin{definition} Let $\mu$ be a positive measure on $\D$ and fix $0< p,q<  \infty.$ We say  that  $\mu$ is  a vanishing $q$-Carleson measure for $A^p_\om$ if the inclusion $I_{\mu}:A^p_{\om}\longrightarrow L^q_\om$ is compact, or equivalently, if $$\int_{\D}|f_n(z)|^q\,\om(z)^{q/2}\, d\mu(z)\to 0,$$
whenever $f_n$ is bounded in $A^p_{\om}$  and  converges to zero uniformly on each compact subsets of $\D.$ 
\end{definition}
Next, we  characterize vanishing  $q$-Carleson measures for $A^p_\om$ whether  $0< p\le q < \infty$ or $0< q < p < \infty.$ We begin with the case  $0< p \le q<\infty.$
\begin{otherth}\label{thm:VCMPQ}
Given $\tau \in \mathcal{L^*},$  let $\mu$  be a finite positive Borel measure on $\D.$ Assume $0< p \le q<\infty,$ $s=p/q,$ $1/s < t< \infty.$ The following statements are  all equivalent:
\begin{enumerate}
\item[{\rm (a)}]   $\mu$ is a  vanishing $q$-Carleson measure for $A^p_\om.$
\item[{\rm (b)}]   $ \tau(z)^{2(1-1/s)}G_{t}(\mu)(z)\to 0 $ as $|z|\to 1^{-}.$
\item[{\rm (c)}]  $\tau(z)^{2(1-1/s)}\widehat{\mu_{\delta}}(z)\to 0$ as $|z|\to 1^{-},$ for any small enough $\delta>0.$ 
\end{enumerate}
\end{otherth}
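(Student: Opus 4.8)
The plan is to establish the two equivalences (a)~$\Leftrightarrow$~(c) and (b)~$\Leftrightarrow$~(c) separately, using the already-proven bounded characterization (Theorem~\ref{thm:CMPQ}) as the workhorse, together with the Borichev test functions of Lemma~\ref{Borichevlemma} and the normalized-kernel estimates of Lemma~\ref{lem:RK-PE1}. Throughout I would keep in mind that $2(1-1/s)=2(1-q/p)\le 0$ since $p\le q$, so the weight $\tau(z)^{2(1-1/s)}$ \emph{blows up} as $|z|\to 1$ when $p<q$.

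For (a)~$\Rightarrow$~(c) I would probe the embedding with the test functions. Fixing $n$ large and setting $g_a=F_{a,n}/\tau(a)^{2/p}$, Lemma~\ref{Borichevlemma} makes $\{g_a\}$ bounded in $A^p_\omega$, while the decay bound \eqref{BL2} shows $g_a\to0$ uniformly on compacta as $|a|\to1$. Hence the vanishing condition (a) forces $\int_\D|g_a|^q\omega^{q/2}\,d\mu\to0$. Restricting the integral to $D_\delta(a)$ and invoking the lower bound \eqref{BL1} (so that $|g_a|^q\omega^{q/2}\asymp\tau(a)^{-2q/p}$ there) gives $\tau(a)^{2(1-1/s)}\widehat{\mu_\delta}(a)\lesssim\int_\D|g_a|^q\omega^{q/2}\,d\mu$, which is exactly (c). For the reverse (c)~$\Rightarrow$~(a) I would truncate against Theorem~\ref{thm:CMPQ}: for $f_n$ bounded in $A^p_\omega$ and tending to $0$ uniformly on compacta, split $\int_\D|f_n|^q\omega^{q/2}\,d\mu$ over $\{|w|\le R\}$ and $\{|w|>R\}$; the first piece vanishes by uniform convergence and finiteness of $\mu$, and the second is $\le\|I_{\mu_R}\|^q\asymp\sup_z\tau(z)^{2(1-1/s)}\widehat{(\mu_R)_\delta}(z)$, which a short geometric argument (a disc $D_\delta(z)$ meeting $\{|w|>R\}$ forces $|z|$ near $1$) renders $\lesssim\epsilon$ via (c) once $R$ is close to $1$.

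For the pair (b)~$\Leftrightarrow$~(c), the easy direction (b)~$\Rightarrow$~(c) is immediate from the on-diagonal estimate \eqref{eqn:RK-Diag1}, which gives $\widehat{\mu_\delta}(z)\lesssim G_t(\mu)(z)$; one may also run the truncation above in its $G_t$-form, controlling the compact part by the uniform bound $|k_{t,z}(w)|^t\omega(w)^{t/2}\lesssim\tau(z)^{-2}$ that follows from \eqref{Eq-NE} and \eqref{Eq-INF}. The one genuinely delicate step is (c)~$\Rightarrow$~(b). Here I would fix the $(\delta,\tau)$-lattice $\{z_n\}$ of Lemma~\ref{lem:Lcoverings}, decompose $G_t(\mu)(z)=\sum_n\int_{D_\delta(z_n)}|k_{t,z}|^t\omega^{t/2}\,d\mu$, and use the subharmonic-type estimate \eqref{Eq-gamma} (Lemma~\ref{lem:subHarmP}) to replace the integrand on each $D_\delta(z_n)$ by its average, reaching $G_t(\mu)(z)\lesssim\sum_n\widehat{\mu_\delta}(z_n)\int_{D_{3\delta}(z_n)}|k_{t,z}|^t\omega^{t/2}\,dA$. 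Splitting the lattice into a boundary part $\{|z_n|>R\}$ and a finite central part $\{|z_n|\le R\}$, the boundary part is controlled by (c): inserting $\widehat{\mu_\delta}(z_n)<\epsilon\,\tau(z_n)^{2(1/s-1)}$, using finite multiplicity, and applying the integral estimate \eqref{Eq-NE1} with $\beta=2(1/s-1)$, it sums to $\lesssim\epsilon\,\tau(z)^{-2(1-1/s)}$, hence contributes $\lesssim\epsilon$ after multiplication by $\tau(z)^{2(1-1/s)}$.

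\textbf{The main obstacle is the finite central part.} For each fixed $z_n$ in a compact set one must show that $\tau(z)^{2(1-1/s)}\int_{D_{3\delta}(z_n)}|k_{t,z}|^t\omega^{t/2}\,dA\to0$ as $|z|\to1$; that is, the mass of the normalized kernel must escape from any fixed compact set fast enough to defeat the divergence of $\tau(z)^{-2(1-1/s)}$ when $p<q$. This is precisely an off-diagonal decay statement for $k_{t,z}$, and I expect to deduce it from the kernel estimates underlying Lemma~\ref{lem:RK-PE1} (the same decay responsible for $k_{t,z}\to0$ uniformly on compacta). When $p=q$ the exponent $2(1-1/s)$ vanishes and this step collapses to dominated convergence, so the entire difficulty is confined to the regime $p<q$.
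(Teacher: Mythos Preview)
The paper does not give its own proof of this statement: it is quoted as Theorem~\ref{thm:VCMPQ} with the blanket remark ``For the proofs of all theorems in this section, see Section~3 of~\cite{Hi-w}.'' So there is nothing to compare against directly; your plan has to be judged on its own merits.

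Your scheme is the standard one and is correct. The step you flag as the \emph{main obstacle}---the finite central part in (c)~$\Rightarrow$~(b)---is in fact the easiest place in the whole argument, and the hypothesis $t>1/s=q/p$ is exactly what makes it trivial. For $w$ in the fixed compact set $\overline{D_{3\delta}(z_n)}$ you have the crude off-diagonal bound
\[
|K_z(w)|\,\omega(z)^{1/2}=|K_w(z)|\,\omega(z)^{1/2}\le \|K_w\|_{A^\infty_\omega}\asymp \omega(w)^{-1/2}\tau(w)^{-2}\le C_R,
\]
by \eqref{Eq-INF}. Combined with $\|K_z\|_{A^t_\omega}^t\asymp\omega(z)^{-t/2}\tau(z)^{2(1-t)}$ from \eqref{Eq-NE}, this gives $|k_{t,z}(w)|^t\omega(w)^{t/2}\lesssim C_R'\,\tau(z)^{2(t-1)}$ uniformly on the compact set, and hence
\[
\tau(z)^{2(1-1/s)}\int_{D_{3\delta}(z_n)}|k_{t,z}|^t\omega^{t/2}\,dA\ \lesssim\ \tau(z)^{2(1-q/p)}\cdot\tau(z)^{2(t-1)}=\tau(z)^{2(t-q/p)}\longrightarrow 0,
\]
since $t>q/p$ and $\tau(z)\to0$. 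No delicate off-diagonal kernel decay is needed; the $L^\infty$ bound already suffices. So your ``expect to deduce it'' is correct, and the difficulty you anticipated in the regime $p<q$ is illusory---this is precisely why the range $t>1/s$ appears in the statement.

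One small cosmetic point: in (c)~$\Rightarrow$~(a) you should also remark that condition~(c) implies the bounded-Carleson condition of Theorem~\ref{thm:CMPQ}(c) (a vanishing quantity is in particular bounded), so that the truncated measure $\mu_R$ is indeed a $q$-Carleson measure for $A^p_\omega$ and $\|I_{\mu_R}\|^q\asymp\sup_z\tau(z)^{2(1-1/s)}\widehat{(\mu_R)_\delta}(z)$ is legitimate.
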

The following theorem characterizes vanishing $q$-Carleson measures for $A^p_\om$  when $p=\infty$ and $0<  q <\infty$ in terms of the $t$-Berezin transform $G_{t}(\mu)$ and the averaging function $\widehat{\mu}_{\delta}.$ 
\begin{otherth}\label{thm:CMOP2}
Given $\tau \in \mathcal{L^*},$  let $\mu$  be a finite positive Borel measure on $\D.$ Assume $0< q < \infty.$  The following conditions are  all equivalent:
\begin{enumerate}
\item[{\rm (a)}]   $\mu$ is a   $q$-Carleson measure for $A^{\infty}_\om.$
\item[{\rm (b)}]   $\mu$ is a  vanishing $q$-Carleson measure for $A^{\infty}_\om.$ 
\item[{\rm (c)}] For any small enough $\delta>0$ , we have  $$\widehat{\mu_{\delta}} \in L^{1}(\D, dA).$$
\item[(d)] For any small enough $t>0$ , we have  $$G_t(\mu) \in L^{1}(\D, dA).$$
\end{enumerate}
\end{otherth}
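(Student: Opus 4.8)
The plan is to prove the four conditions equivalent through the cycle $(b)\Rightarrow(a)\Rightarrow(c)\Rightarrow(b)$, together with the side equivalence $(c)\Leftrightarrow(d)$. The implication $(b)\Rightarrow(a)$ is immediate, since every compact embedding is bounded. The heart of the matter, and the step I expect to be hardest, is $(a)\Rightarrow(c)$: a single test function $F_{a,n}$ from Lemma \ref{Borichevlemma} only yields the pointwise bound $\mu(D_\delta(a))\lesssim \|I_\mu\|^q$, i.e.\ $\tau^2\widehat{\mu_\delta}\in L^\infty$, whereas $(c)$ demands the genuinely stronger summability $\widehat{\mu_\delta}\in L^1$.

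To close that gap I would test the embedding against a full superposition of bumps with random signs. Fix $n\ge 2$, let $\{z_k\}$ be the $(\delta,\tau)$-lattice of Lemma \ref{lem:Lcoverings}, and for $t\in[0,1]$ set $F_t=\sum_k r_k(t)\,F_{z_k,n}$, where $\{r_k\}$ are the Rademacher functions. By Proposition \ref{At1-pp}(b) one has $\|F_t\|_{A^\infty_\om}\lesssim 1$ uniformly in $t$, so the Carleson hypothesis gives $\int_\D |F_t|^q\om^{q/2}\,d\mu\lesssim \|I_\mu\|^q$ for every $t$. Integrating in $t$ and invoking Khinchine's inequality to replace $\int_0^1|F_t(z)|^q\,dt$ by $\big(\sum_k|F_{z_k,n}(z)|^2\big)^{q/2}$, then restricting the inner sum to its $k$-th term on $D_\delta(z_k)$---where $|F_{z_k,n}|\,\om^{1/2}\asymp 1$ by \eqref{BL1}---and summing over $k$ using the finite-multiplicity property (iv) of Lemma \ref{lem:Lcoverings}, I obtain $\sum_k\mu(D_\delta(z_k))\lesssim \|I_\mu\|^q<\infty$. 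A routine Fubini-and-covering computation, using $\tau(z)\asymp\tau(z_k)$ on $D_\delta(z_k)$ (cf.\ \eqref{eqn:asymptau}), shows $\int_\D\widehat{\mu_\delta}\,dA\asymp\sum_k\mu(D_\delta(z_k))$, which is exactly $(c)$.

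For the side equivalence $(c)\Leftrightarrow(d)$ I would compute both integrals by Fubini. On the one hand $\int_\D\widehat{\mu_\delta}\,dA\asymp\mu(\D)$, since $\int_\D \mathbf 1_{\{\zeta\in D_\delta(z)\}}\tau(z)^{-2}\,dA(z)\asymp 1$ for each fixed $\zeta$. On the other hand, writing $|k_{t,z}(\zeta)|^t=|K_\zeta(z)|^t\om(z)^{t/2}\tau(z)^{2(t-1)}$ via \eqref{Eq-NE}, Lemma \ref{nEstim} with $\beta=2(t-1)$ yields the upper bound $\int_\D|k_{t,z}(\zeta)|^t\,dA(z)\lesssim \om(\zeta)^{-t/2}$, while the on-diagonal estimate \eqref{eqn:RK-Diag1} of Lemma \ref{lem:RK-PE1} supplies the matching lower bound from the region $\{z:\zeta\in D_\delta(z)\}$; hence $\int_\D G_t(\mu)\,dA\asymp\int_\D\om(\zeta)^{t/2}\om(\zeta)^{-t/2}\,d\mu(\zeta)=\mu(\D)$. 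Thus $(c)$ and $(d)$ are both equivalent to $\mu(\D)<\infty$.

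Finally, for $(c)\Rightarrow(b)$ I would verify the vanishing condition directly. Let $\|f_j\|_{A^\infty_\om}\le 1$ with $f_j\to 0$ uniformly on compact subsets. Using $|f_j|^q\om^{q/2}\le 1$ together with the pointwise estimate of Lemma \ref{lem:subHarmP}, I decompose $\int_\D|f_j|^q\om^{q/2}\,d\mu\lesssim\sum_k\big(\sup_{D_\delta(z_k)}|f_j|^q\om^{q/2}\big)\mu(D_\delta(z_k))$ over the lattice. Splitting the sum at $|z_k|\le R$ and $|z_k|>R$, the tail is controlled by $\sum_{|z_k|>R}\mu(D_\delta(z_k))$, which tends to $0$ as $R\to 1^-$ because the series converges by $(c)$; the remaining finite sum tends to $0$ as $j\to\infty$ by uniform convergence on $\{|z|\le R\}$. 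Letting first $j\to\infty$ and then $R\to 1^-$ shows $\int_\D|f_j|^q\om^{q/2}\,d\mu\to 0$, i.e.\ $\mu$ is a vanishing $q$-Carleson measure, completing the cycle.
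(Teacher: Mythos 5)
Your proof is correct, but note that the paper never proves this theorem itself: it explicitly defers all proofs of the Carleson-measure characterizations in this subsection to Section 3 of \cite{Hi-w}, so there is no internal argument to compare against. What you have written is a self-contained proof built exactly from the toolkit the paper assembles, following the standard Luecking-type scheme used throughout this literature: the decisive step $(a)\Rightarrow(c)$ via the Rademacher superposition $F_t=\sum_k r_k(t)F_{z_k,n}$ over the $(\delta,\tau)$-lattice of Lemma \ref{lem:Lcoverings}, Khinchine's inequality (Lemma \ref{Khinchine`s}), the local estimate \eqref{BL1}, and the finite multiplicity of the covering is executed soundly; for full rigor, apply Khinchine to the finite partial sums $\sum_{k\in K}r_k(t)F_{z_k,n}$, which are still uniformly bounded in $A^\infty_\om$ by Proposition \ref{At1-pp}(b), and then let $K\nearrow\N$. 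Two further remarks. First, your own Fubini computations show $\int_\D\widehat{\mu_\delta}\,dA\asymp\delta^2\mu(\D)$ and $\int_\D G_t(\mu)\,dA\asymp\mu(\D)$ (constants depending on $t$), so under the theorem's stated hypothesis that $\mu$ is \emph{finite}, conditions (c) and (d) hold automatically, as do (a) (since $|f(z)|\,\om(z)^{1/2}\le\|f\|_{A^\infty_\om}$ gives $\int_\D|f|^q\om^{q/2}\,d\mu\le\mu(\D)\,\|f\|_{A^\infty_\om}^q$) and (b) (by dominated convergence, with constant majorant $\mu$-integrable). The statement is therefore only substantive for measures not assumed finite in advance, and the virtue of your argument is precisely that it never uses finiteness: it shows each of (a)--(d) is equivalent to $\mu(\D)<\infty$ for an arbitrary positive Borel measure, whereas the "trivial" route above would collapse the theorem to a vacuous one. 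Second, a cosmetic point: in the kernel computation the relation $|k_{t,z}(\zeta)|^t=|K_\zeta(z)|^t\om(z)^{t/2}\tau(z)^{2(t-1)}$ should be $\asymp$ rather than equality, obtained from \eqref{Eq-NE} together with the Hermitian symmetry $|K_z(\zeta)|=|K_\zeta(z)|$; this affects nothing downstream.
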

\begin{otherth}\label{thm:VCMQP}
Given $\tau \in \mathcal{L^*},$  let $\mu$  be a finite positive Borel measure on $\D.$ Assume that  $0< q < p<\infty.$ The following statements are  equivalent:
\begin{enumerate}
\item[{\rm (a)}]   $\mu$ is a   $q$-Carleson measure for $A^p_\om.$ 
\item[{\rm (b)}]   $\mu$ is a  vanishing $q$-Carleson measure for $A^p_\om$ .
\end{enumerate}
\end{otherth}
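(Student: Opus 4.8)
The plan is to establish the nontrivial implication (a)$\Rightarrow$(b), since (b)$\Rightarrow$(a) is immediate (a compact inclusion is bounded). By Theorem \ref{thm:CMOP} applied to $\mu$, condition (a) is equivalent to $\widehat{\mu_\delta}\in L^{s'}(\D,dA)$ with $s'=p/(p-q)$, and this integrability is exactly what I would exploit. To verify the sequential formulation of vanishing $q$-Carleson in (b), I would fix a sequence $\{f_n\}$ that is bounded in $A^p_\omega$ and tends to $0$ uniformly on compact subsets of $\D$, and show $\int_\D|f_n|^q\omega^{q/2}\,d\mu\to 0$.

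First I would discretize. Taking a $(\delta,\tau)$-lattice $\{z_k\}$ as in Lemma \ref{lem:Lcoverings} and using the finite overlap of $\{D_\delta(z_k)\}$ together with the subharmonic-type estimate \eqref{Eq-gamma} (with $\beta=q/2$, $\gamma=0$), one bounds the value of $|f_n|^q\omega^{q/2}$ on $D_\delta(z_k)$ by its average over a slightly larger disc, giving
\[\int_\D|f_n|^q\omega^{q/2}\,d\mu\lesssim\sum_k\widehat{\mu_\delta}(z_k)\,b_{n,k},\qquad b_{n,k}:=\int_{D_{3\delta}(z_k)}|f_n|^q\omega^{q/2}\,dA.\]
The idea is then to pair the measure data against the function data by H\"older with the conjugate exponents $s'=p/(p-q)$ and $s=p/q$, inserting the correct powers of $\tau(z_k)$ so that the measure factor reproduces the $L^{s'}(dA)$ norm. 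Writing each summand as $\big(\widehat{\mu_\delta}(z_k)\tau(z_k)^{2/s'}\big)\big(b_{n,k}\tau(z_k)^{-2/s'}\big)$, H\"older yields the product of $\big(\sum_k\widehat{\mu_\delta}(z_k)^{s'}\tau(z_k)^2\big)^{1/s'}$ and $\big(\sum_k b_{n,k}^{s}\tau(z_k)^{-2s/s'}\big)^{1/s}$. An inner H\"older on each $b_{n,k}$ (exponent $p/q$ against $dA$ on $D_{3\delta}(z_k)$, whose area is $\asymp\tau(z_k)^2$) converts the $q$-integrals into $p$-integrals and shows the second factor is dominated by $\|f_n\|_{A^p_\omega}^{q}$, hence uniformly bounded; and since $\widehat{\mu_\delta}(z_k)\tau(z_k)^2=\mu(D_\delta(z_k))$ with $\widehat{\mu_\delta}(z_k)\lesssim\widehat{\mu_{4\delta}}(z)$ for $z\in D_\delta(z_k)$ (using $D_\delta(z_k)\subset D_{4\delta}(z)$ and $\tau(z_k)\asymp\tau(z)$ there), the first factor is controlled by $\|\widehat{\mu_{4\delta}}\|_{L^{s'}(dA)}<\infty$.

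To upgrade boundedness of this sum to vanishing, I would split the index set at level $r$ into the finite set $F=\{k:|z_k|\le r\}$ and its complement. For the tail $k\notin F$ the same H\"older estimate bounds the contribution by $\big(\sum_{k\notin F}\widehat{\mu_\delta}(z_k)^{s'}\tau(z_k)^2\big)^{1/s'}$ times a uniform constant, and since the full series converges this tail can be made smaller than any prescribed $\varepsilon$ by taking $r$ close to $1$. For the finite part $k\in F$, the discs $D_{3\delta}(z_k)$ lie in a fixed compact subset of $\D$, so each $b_{n,k}\to0$ as $n\to\infty$ by uniform convergence of $f_n$ to $0$ there, and a finite sum of such terms vanishes. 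Letting $n\to\infty$ and then $\varepsilon\to0$ gives $\int_\D|f_n|^q\omega^{q/2}\,d\mu\to0$, which is exactly the vanishing $q$-Carleson condition for $A^p_\omega$.

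The part I expect to require the most care is the weighted H\"older bookkeeping: splitting the powers of $\tau(z_k)$ so that the measure factor lands precisely on the scale-invariant quantity $\sum_k\widehat{\mu_\delta}(z_k)^{s'}\tau(z_k)^2$, which is comparable to $\|\widehat{\mu_\delta}\|_{L^{s'}(dA)}^{s'}$, while the function factor collapses to $\|f_n\|_{A^p_\omega}^q$ after the inner H\"older step. The passage from the pointwise subharmonic bound to the averaged discrete sum, and the comparison of $\widehat{\mu_\delta}$ at lattice points with its continuous counterpart, are the technical ingredients underlying these steps; once they are in place, the tail argument is routine.
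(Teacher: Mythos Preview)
Your argument is correct and is the standard route: reduce via Theorem~\ref{thm:CMOP} to the integrability $\widehat{\mu_\delta}\in L^{p/(p-q)}(dA)$, discretize over a $(\delta,\tau)$-lattice using Lemma~\ref{lem:subHarmP} and Lemma~\ref{lem:Lcoverings}, pair by H\"older with the conjugate exponents $p/(p-q)$ and $p/q$, and finish with a tail/finite split. The bookkeeping with the powers of $\tau(z_k)$ is done correctly; in particular $s(1-q/p)=s/s'$ makes the inner H\"older step collapse the function factor to $\|f_n\|_{A^p_\omega}^q$, and the comparison $\widehat{\mu_\delta}(z_k)\lesssim\widehat{\mu_{4\delta}}(z)$ for $z\in D_\delta(z_k)$ passes from the discrete sum to the continuous $L^{p/(p-q)}$-norm.

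As for comparison with the paper: the paper does not supply its own proof of this statement. It records Theorem~\ref{thm:VCMQP} (together with the other Carleson-measure results of Section~2.3) as a quoted result and refers the reader to \cite{Hi-w} for the proofs. Your approach is precisely the lattice/H\"older/tail argument one expects in that reference, so there is no substantive divergence to report.
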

 
\subsection{Embedding theorems}
The embedding theorems of $S^p_\omega$ into $L^q(\D,d\mu),$ for  $0<p,q\le \infty$ and $\omega \in \mathcal{W},$
where $$S^p_\omega:=\Bigg\{f\in H(\D): \int_{\D}|f(z)|^p\,\frac{\omega(z)^{p/2}}{(1+\varphi'(z))^p}dA(z)\,<\infty \Bigg\}$$
and 
$$S^\infty_\omega:=\Bigg\{f\in H(\D): \sup_{z\in\D}|f(z)|\,\frac{\omega(z)^{1/2}}{(1+\varphi'(z))}\,<\,\infty \Bigg\}.$$
For the proofs of all theorems in this section, see Section 4 of \cite{HGJ}. We start with the case $0<p\le q< \infty.$
\begin{otherl}\label{embed1}
    Let $\omega \in \mathcal{W}$ and $0<p\leq q<\infty$. Let $\mu$ be a finite positive Borel measure on $\mathbb{D}$. Then
    \begin{enumerate}
        \item[(i)] $I_\mu : S_\omega ^p \longrightarrow L^q (\mathbb{D} ,d\mu)$ is bounded if and only if for each $\delta >0$ small enough,
        $$K_{\mu,\omega} = \sup_{z\in\mathbb{D}} \frac{1}{\tau(z)^{2q/p}} \int_{D_\delta(z)}(1+\varphi^\prime (\xi))^q \omega(\xi)^{-q/2} d\mu(\xi)<\infty.$$
        \item[(ii)]$I_\mu : S_\omega ^p \longrightarrow L^q (\mathbb{D} ,d\mu)$ is compact if and only if
        $$\lim_{|z|\rightarrow 1^-} \int_{D_\delta(z)}(1+\varphi^\prime (\xi))^q \omega(\xi)^{-q/2} d\mu(\xi)=0.$$
    \end{enumerate}
\end{otherl}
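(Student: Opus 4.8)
The plan is to reduce everything to the geometry of the Oleinik discs $D_\delta(z)$ and to exploit that $S^p_\omega$ carries the same sub‑mean‑value estimate as $A^p_\omega$, now with the weight $\omega^{p/2}(1+\varphi')^{-p}$. The starting point is the pointwise bound obtained from \eqref{Eq-gamma} with $\beta=p/2$ and $\gamma=p$: for all $f\in H(\mathbb{D})$,
$$|f(z)|^p\,\frac{\omega(z)^{p/2}}{(1+\varphi'(z))^p}\lesssim\frac{1}{\tau(z)^2}\int_{D_\delta(z)}|f(\xi)|^p\,\frac{\omega(\xi)^{p/2}}{(1+\varphi'(\xi))^p}\,dA(\xi),$$
so that $|f(z)|\lesssim (1+\varphi'(z))\,\omega(z)^{-1/2}\tau(z)^{-2/p}\,\|f\|_{S^p_\omega,D_\delta(z)}$, the last norm being taken over $D_\delta(z)$. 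Combined with \eqref{eqn:asymptau} and the local constancy $\varphi'(\xi)\asymp\varphi'(z)$, $\omega(\xi)\asymp\omega(z)$ on $D_\delta(z)$, this shows point evaluations are bounded on $S^p_\omega$ and is the bridge between $\mu$ and the defining weight.

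For sufficiency in (i) I fix a $(\delta,\tau)$‑lattice $\{z_n\}$ from Lemma \ref{lem:Lcoverings} and estimate $\int_{\mathbb{D}}|f|^q\,d\mu\le\sum_n\int_{D_\delta(z_n)}|f|^q\,d\mu$. On each disc I raise the pointwise bound to the power $q$, freeze the weights at $z_n$, and enlarge $D_\delta(\xi)$ to $D_{3\delta}(z_n)$ via property (iii), obtaining
$$\int_{D_\delta(z_n)}|f|^q\,d\mu\lesssim\frac{(1+\varphi'(z_n))^q}{\omega(z_n)^{q/2}\tau(z_n)^{2q/p}}\,\mu(D_\delta(z_n))\,\|f\|^q_{S^p_\omega,D_{3\delta}(z_n)}\lesssim K_{\mu,\omega}\,\|f\|^q_{S^p_\omega,D_{3\delta}(z_n)},$$
the final step holding because the frozen factor $\omega(z_n)^{-q/2}(1+\varphi'(z_n))^q\mu(D_\delta(z_n))$ is comparable to $\int_{D_\delta(z_n)}(1+\varphi')^q\omega^{-q/2}\,d\mu$. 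Summing over $n$, the inequality $\sum_n a_n^{q/p}\le(\sum_n a_n)^{q/p}$ (valid since $q/p\ge 1$) with $a_n=\|f\|^p_{S^p_\omega,D_{3\delta}(z_n)}$ together with the finite multiplicity $N$ of the covering gives $\int_{\mathbb{D}}|f|^q\,d\mu\lesssim K_{\mu,\omega}\,\|f\|^q_{S^p_\omega}$.

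For necessity in (i) I test $I_\mu$ against the functions $F_{a,n}$ of Lemma \ref{Borichevlemma} with $n$ large. The new ingredient is the norm $\|F_{a,n}\|_{S^p_\omega}\asymp\tau(a)^{2/p}/(1+\varphi'(a))$: the main term comes from $|z-a|<\tau(a)$, where $|F_{a,n}|^p\omega^{p/2}\asymp 1$ and $(1+\varphi')^{-p}\asymp(1+\varphi'(a))^{-p}$, while the polynomially decaying tail \eqref{BL2} is summable once $n$ is large relative to $1/p$. Restricting the Carleson inequality to $D_\delta(a)$, where $|F_{a,n}|\asymp\omega^{-1/2}$, and again freezing the weights yields
$$\frac{1}{\tau(a)^{2q/p}}\int_{D_\delta(a)}(1+\varphi')^q\omega^{-q/2}\,d\mu\asymp\frac{(1+\varphi'(a))^q}{\tau(a)^{2q/p}}\int_{D_\delta(a)}|F_{a,n}|^q\,d\mu\lesssim\frac{(1+\varphi'(a))^q}{\tau(a)^{2q/p}}\,\|I_\mu\|^q\,\|F_{a,n}\|^q_{S^p_\omega}\lesssim\|I_\mu\|^q$$
for $|a|>\rho_0$; the compact region $\{|a|\le\rho_0\}$ contributes a bounded amount since $\mu$ is finite, so $K_{\mu,\omega}<\infty$.

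Part (ii) follows by localizing the two estimates near $\partial\mathbb{D}$. For necessity, the normalized test functions $F_{a,n}/\|F_{a,n}\|_{S^p_\omega}$ form a bounded family in $S^p_\omega$ that tends to $0$ uniformly on compact subsets as $|a|\to 1^-$ (again because the decay $3n$ beats the growth of $1+\varphi'\asymp 1/\tau$), so compactness of $I_\mu$ forces the lower bound above to vanish, i.e. the little‑oh condition. For sufficiency, given $f_k$ bounded in $S^p_\omega$ converging to $0$ locally uniformly, I split $\int_{\mathbb{D}}|f_k|^q\,d\mu$ over $\{|z|\le r\}$ and $\{|z|>r\}$: the first piece vanishes by uniform convergence and finiteness of $\mu$, while the second is bounded by repeating the sufficiency estimate only over lattice points with $|z_n|>r$, where the hypothesis makes the frozen factor arbitrarily small. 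I expect the main obstacle to be the precise evaluation of $\|F_{a,n}\|_{S^p_\omega}$ and of the local‑uniform decay of the normalized test functions: both require that the extra factor $(1+\varphi')^{-p}$ (equivalently the growth $1+\varphi'\asymp 1/\tau$) be dominated by the decay rate $3n$ for $n$ chosen large, after which the remaining arguments are a routine adaptation of the covering‑and‑test‑function scheme to the weight $\omega^{p/2}(1+\varphi')^{-p}$.
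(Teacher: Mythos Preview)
The paper does not give its own proof of this lemma; it simply refers to Section~4 of \cite{HGJ}. Your covering--and--test--function scheme is the standard architecture for such embedding results and is almost certainly what that reference does, so the sufficiency proofs in (i) and (ii) via the $(\delta,\tau)$--lattice and finite multiplicity are correct as sketched.

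There is one avoidable difficulty in your necessity argument. You test $I_\mu$ against $F_{a,n}$ and assert $\|F_{a,n}\|_{S^p_\omega}\asymp\tau(a)^{2/p}/(1+\varphi'(a))$. The lower bound is immediate from $D_\delta(a)$, but the upper bound requires the tail $\int_{|z-a|>\tau(a)}|F_{a,n}|^p\omega^{p/2}(1+\varphi')^{-p}\,dA$ to be dominated by $\tau(a)^2(1+\varphi'(a))^{-p}$; since $(1+\varphi'(z))^{-p}$ can be far larger than $(1+\varphi'(a))^{-p}$ away from $a$, this needs a quantitative growth bound on $1+\varphi'$ in terms of $\tau$. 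Your parenthetical ``$1+\varphi'\asymp1/\tau$'' is \emph{not} generally valid for $\omega\in\mathcal{W}$ (check the model weights $\omega_\alpha$: there $1+\varphi'(r)\asymp(1-r)^{-\alpha-1}$ while $\tau(r)^{-1}\asymp(1-r)^{-(\alpha+2)/2}$), so this step has a genuine gap as written.

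The clean fix, and the route suggested by the tools the paper itself records, is to use the \emph{derivatives} $F'_{a,n}$ as test functions in $S^p_\omega$. The Littlewood--Paley formula \eqref{eqnorm} gives directly $\|F'_{a,n}\|_{S^p_\omega}\asymp\|F_{a,n}\|_{A^p_\omega}\asymp\tau(a)^{2/p}$ with no extra $(1+\varphi'(a))$ factor to track, and Lemma~\ref{BL3} yields $|F'_{a,n}(z)|\asymp(1+\varphi'(z))\,\omega(z)^{-1/2}$ on $D_\delta(a)$. Restricting the Carleson inequality to $D_\delta(a)$ then gives
\[
\int_{D_\delta(a)}(1+\varphi')^q\omega^{-q/2}\,d\mu\ \asymp\ \int_{D_\delta(a)}|F'_{a,n}|^q\,d\mu\ \le\ \|I_\mu\|^q\,\|F'_{a,n}\|^q_{S^p_\omega}\ \asymp\ \|I_\mu\|^q\,\tau(a)^{2q/p},
\]
which is exactly $K_{\mu,\omega}<\infty$. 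The compactness direction in (ii) follows by the same substitution, since $\tau(a)^{-2/p}F'_{a,n}$ is bounded in $S^p_\omega$ and, by Cauchy estimates applied to $\tau(a)^{-2/p}F_{a,n}\to0$ locally uniformly, also converges to $0$ on compact sets as $|a|\to1^-$.
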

Then Khinchine`s inequality is the following.
\begin{otherl}\label{Khinchine`s}
(Khinchine`s inequality). For $0<p<\infty$, there exists a constant $C_p$such that
$$C_p^{-1} \left(\sum_{k=1} ^n|\lambda_k|^2\right)^{p/2} \leq \int_0 ^1 \left|\sum_{k=1} ^n \lambda_k R_k (t)\right|^p dt \leq C_p \left( \sum_{k=1} ^n |\lambda_k|^2\right)^{p/2},$$
for all $n\in \mathbb{N}$ and $\left\{\lambda_k\right\}_{k=1} ^n \subset \mathbb{C}.$
\end{otherl}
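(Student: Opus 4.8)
The plan is to prove this classical inequality by the standard route: an exact computation at $p=2$, a subgaussian moment bound for the upper estimate when $p$ is large, and an interpolation (Hölder) argument for the lower estimate when $p$ is small. Throughout I would write $S(t)=\sum_{k=1}^n\lambda_k R_k(t)$ and use that the Rademacher functions $R_k(t)=\mathrm{sign}(\sin(2^k\pi t))$ form an orthonormal system on $[0,1]$ taking the values $\pm1$ on dyadic subintervals; in probabilistic language they are independent symmetric $\pm1$ variables, so $\int_0^1 R_jR_k\,dt=\delta_{jk}$ and, more generally, $\int_0^1\prod_{k\in A}R_k(t)\,dt=0$ for every nonempty finite index set $A$.

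First I would dispose of the exponent $p=2$ together with the two easy halves. Expanding the square and using orthonormality gives the exact identity $\int_0^1|S(t)|^2\,dt=\sum_{k=1}^n|\lambda_k|^2$ (for complex $\lambda_k$, split $S$ into its real and imaginary parts, each a real Rademacher sum, and add). Since Lebesgue measure on $[0,1]$ is a probability measure, Hölder's inequality yields $\|S\|_p\le\|S\|_2$ for $0<p\le2$ and $\|S\|_2\le\|S\|_p$ for $p\ge2$. This already gives the upper bound when $p\le2$ and the lower bound when $p\ge2$, each with constant $1$.

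The remaining two estimates are the genuine content. For the upper bound when $p\ge2$ I would first treat real coefficients and establish the subgaussian estimate $\int_0^1 e^{sS(t)}\,dt=\prod_{k=1}^n\cosh(s\lambda_k)\le\prod_{k=1}^n e^{s^2\lambda_k^2/2}=e^{s^2\sigma^2/2}$, where $\sigma^2=\sum_{k=1}^n\lambda_k^2$; the product formula is the independence of the $R_k$ (via the vanishing mixed integrals) and the pointwise bound is $\cosh x\le e^{x^2/2}$. Matching Taylor coefficients bounds every even moment by $\int_0^1 S^{2m}\,dt\le (2m)!\,\sigma^{2m}/(2^m m!)$, and monotonicity of the $L^p$-norms (choosing $2m\ge p$) then controls $\|S\|_p$ by $C_p\,\sigma=C_p(\sum_{k=1}^n|\lambda_k|^2)^{1/2}$ for every $p\ge2$, with $C_p\sim\sqrt{p/e}$; the complex case follows from the triangle inequality $|S|\le|\re S|+|\im S|$ in $L^p$.

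Finally, the lower bound for $0<p<2$ is the step that requires a trick, and I expect it to be the main obstacle, since it is the only place where the two nontrivial estimates must be combined rather than proved in isolation. Fix any $r>2$ and choose $\theta\in(0,1)$ with $\tfrac12=\tfrac{\theta}{p}+\tfrac{1-\theta}{r}$. Log-convexity of the $L^p$-norms gives $\|S\|_2\le\|S\|_p^{\theta}\,\|S\|_r^{1-\theta}$, while the upper bound just established (applied at the exponent $r\ge2$) gives $\|S\|_r\le C_r\|S\|_2$. Substituting and cancelling the common factor $\|S\|_2^{1-\theta}$ (assuming $S\not\equiv0$, the trivial case being immediate) leaves $\|S\|_2^{\theta}\le C_r^{1-\theta}\|S\|_p^{\theta}$, that is $\|S\|_2\le C_r^{(1-\theta)/\theta}\|S\|_p$, which is exactly the required lower bound. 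Everything else reduces to orthogonality of the $R_k$ and elementary estimates for $\cosh$.
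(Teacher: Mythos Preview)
Your argument is correct and is in fact the standard textbook proof of Khinchine's inequality: the $p=2$ case by orthogonality, the easy halves by Jensen/H\"older on a probability space, the upper bound for $p\ge 2$ via the subgaussian moment-generating-function estimate $\prod_k\cosh(s\lambda_k)\le e^{s^2\sigma^2/2}$, and the lower bound for $0<p<2$ by the interpolation trick $\|S\|_2\le\|S\|_p^{\theta}\|S\|_r^{1-\theta}$ combined with the upper bound at an exponent $r>2$. The only minor point worth tightening is that in the passage to complex coefficients you invoke the triangle inequality in $L^p$; for $p\ge 1$ this is Minkowski, and for $0<p<1$ one should instead use $|S|^p\le|\re S|^p+|\im S|^p$, but either way the conclusion holds with a harmless constant.

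As for comparison with the paper: the paper does \emph{not} supply a proof of this lemma. Khinchine's inequality is quoted there as a classical result in the preliminaries section, on the same footing as the other auxiliary lemmas imported from the literature, and is used later as a tool. So there is no competing argument to compare against; your write-up simply supplies what the paper takes for granted.
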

\begin{otherl}\label{embed2}
   Let $\omega \in \mathcal{W}$ and $0<p<q<\infty$. Let $\mu$ be a finite positive Borel measure on $\mathbb{D}$. Then, the following statements are equivalent:
    \begin{enumerate} 
    \item[(a)] The operator $I_\mu : S_\omega ^p \longrightarrow L^q (\mathbb{D} ,d\mu)$ is bounded.
    \item[(b)] The operator $I_\mu : S_\omega ^p \longrightarrow L^q (\mathbb{D} ,d\mu)$ is compact.
    \item[(c)] The function
    $$F_\delta,\mu (\varphi)\in L^{p/(p-q)} (\mathbb{D},dA).$$
    \end{enumerate}
\end{otherl}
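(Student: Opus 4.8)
\emph{Proof plan.} The plan is to establish the cycle of implications $(b)\Rightarrow(a)\Rightarrow(c)\Rightarrow(b)$, the first of which is immediate since every compact operator is bounded. The whole argument runs parallel to the characterization of $q$-Carleson measures for $A^p_\omega$ given in Theorems \ref{thm:CMOP} and \ref{thm:VCMQP}, the only structural change being that the weight $(1+\varphi'(z))^{-p}$ appearing in the definition of $S^p_\omega$ (cf. the Littlewood--Paley formula \eqref{eqnorm}) must be carried through every estimate. First I would fix a $(\delta,\tau)$-lattice $\{z_n\}$ as in Lemma \ref{lem:Lcoverings} and discretize: using the sub-mean-value estimate \eqref{Eq-gamma} on each disc $D_\delta(z_n)$ together with $\tau(z)\asymp\tau(z_n)$ and $1+\varphi'(z)\asymp 1+\varphi'(z_n)$ there, the embedding inequality $\int_\D|f|^q\,d\mu\lesssim\|f\|_{S^p_\omega}^q$ is reduced to a summable inequality over the lattice in which the local quantities $F_{\delta,\mu}(\varphi)(z_n)$ appear as coefficients. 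The finite multiplicity in Lemma \ref{lem:Lcoverings}(iv), together with \eqref{eqn:asymptau} and $|D_\delta(z_n)|\asymp\tau(z_n)^2$, then lets me pass freely between the sequence $\{F_{\delta,\mu}(\varphi)(z_n)\}$ weighted by the disc areas and the function $F_{\delta,\mu}(\varphi)$ in $L^{s}(dA)$, where $s=p/(p-q)$.

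For the necessity $(a)\Rightarrow(c)$, I would use the atomic decomposition of Proposition \ref{At1-pp} together with Khinchine's inequality (Lemma \ref{Khinchine`s}). Given a sequence $\lambda=\{\lambda_k\}$, form the randomized functions $f_t=\sum_k\lambda_k R_k(t)\,F_{z_k,n}\,\|F_{z_k,n}\|_{S^p_\omega}^{-1}$, where $R_k$ are the Rademacher functions and $\|F_{a,n}\|_{S^p_\omega}\asymp\tau(a)^{2/p}(1+\varphi'(a))^{-1}$; the $S^p_\omega$-analogue of Proposition \ref{At1-pp} (whose proof is identical) keeps these uniformly in $S^p_\omega$. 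Applying the assumed boundedness to each $f_t$, integrating in $t$ over $[0,1]$, and invoking Khinchine's inequality on both sides converts the randomized sum into the square function $\big(\sum_k|\lambda_k|^2\,|F_{z_k,n}(z)|^2\,\|F_{z_k,n}\|_{S^p_\omega}^{-2}\big)^{q/2}$. Restricting the $\mu$-integral to the discs $D_\delta(z_k)$ and using the lower bound \eqref{BL1} for $F_{z_k,n}$ there yields, modulo the $\tau(z_k)^2$ normalizations, an estimate $\sum_k|\lambda_k|^{q}\,F_{\delta,\mu}(\varphi)(z_k)\lesssim\|\lambda\|_{\ell^{p}}^q$. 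Since $\lambda$ ranges over all of $\ell^{p}$, this says that $b\mapsto\sum_k b_k\,F_{\delta,\mu}(\varphi)(z_k)$ is bounded on $\ell^{p/q}$, so duality of $\ell^{p/q}$ forces $\{F_{\delta,\mu}(\varphi)(z_k)\}\in\ell^{s}$, $s=p/(p-q)$, which is condition (c).

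For the sufficiency $(c)\Rightarrow(b)$, I would start from the discretized inequality $\int_\D|f|^q\,d\mu\lesssim\sum_n B_n^{q/p}\,F_{\delta,\mu}(\varphi)(z_n)$, where $B_n=\int_{D_\delta(z_n)}|f|^p\,\omega^{p/2}(1+\varphi')^{-p}\,dA$ are the local $S^p_\omega$-energies with $\sum_n B_n\asymp\|f\|_{S^p_\omega}^p$. Hölder's inequality with the exponent $p/q$ and its conjugate $s=p/(p-q)$ gives $\sum_n B_n^{q/p}F_{\delta,\mu}(\varphi)(z_n)\le\big(\sum_n B_n\big)^{q/p}\big(\sum_n F_{\delta,\mu}(\varphi)(z_n)^{s}\big)^{1/s}$, whose last factor is finite by (c); this proves boundedness. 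Compactness then comes for free in this regime: replacing $\{z_n\}$ by its tail $\{z_n:n\ge N\}$ and using $F_{\delta,\mu}(\varphi)\in L^{s}(dA)$ to make the tail sums vanish as $N\to\infty$, one checks that $\int_\D|f_j|^q\,d\mu\to0$ for any sequence $f_j$ bounded in $S^p_\omega$ and converging to $0$ uniformly on compact subsets of $\D$, which is exactly compactness of $I_\mu$; this is the same mechanism that underlies Theorem \ref{thm:VCMQP}.

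I expect the main obstacle to be the necessity step $(a)\Rightarrow(c)$. One must choose the test functions and their normalizations in $S^p_\omega$ rather than in $A^p_\omega$ (as in Lemma \ref{Borichevlemma}), so that the $(1+\varphi')$-weights cancel correctly, and then run the Khinchine randomization so that an arbitrary sequence in $\ell^{p}$ is genuinely converted, by the sequence-space duality, into $\ell^{s}$-summability of the averaging function. Verifying that the pointwise lower bound \eqref{BL1} survives the extra weight, and that the passage between the discrete $\ell^{s}$-condition and the continuous $L^{s}(dA)$-condition is reversible, are precisely the places where the class-$\mathcal{W}$ hypotheses (the slow variation of $\tau$ and of $\varphi'$ across each disc $D_\delta(z_n)$) are used in full.
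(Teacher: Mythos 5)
You should first be aware that the paper contains no internal proof of this lemma: it is one of the lettered results imported from the literature, and the text explicitly defers all proofs in this subsection to Section 4 of \cite{HGJ}. Your plan is therefore compared against the argument of that cited source, which follows Luecking's scheme as in \cite{PP1}, and in outline your plan is that same argument: lattice discretization with the sub-mean-value estimate \eqref{Eq-gamma} plus H\"older for sufficiency, and randomized atomic sums, Khinchine's inequality and $\ell^{p/q}$--$\ell^{s}$ duality for necessity, $s=p/(p-q)$. Note also that the range ``$0<p<q<\infty$'' in the statement is a typo: the exponent $p/(p-q)$ and your own H\"older/duality structure force $q<p$, and you implicitly (and correctly) work in the regime $0<q<p<\infty$, which is the one actually used in the proof of Theorem \ref{Theorem 1.1}(B).

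There is, however, a genuine inconsistency in your plan, located exactly at the discrete--continuous interface you yourself flag as delicate. With the normalization the paper uses, $F_{\delta,\mu}(\varphi)(z)=\tau(z)^{-2q/p}\int_{D_\delta(z)}(1+\varphi'(\xi))^q\,\omega(\xi)^{-q/2}\,d\mu(\xi)$, the quantity your duality step produces and your H\"older step consumes is the \emph{unweighted} sum $\sum_n F_{\delta,\mu}(\varphi)(z_n)^s$. Since each lattice cell has area $\asymp\tau(z_n)^2$, this sum is comparable to $\int_\D F_{\delta,\mu}(\varphi)^s\,d\lambda$ with $d\lambda=dA/\tau^2$ (equivalently to $\int_\D \widehat{\nu_\delta}(z)^s\,dA(z)$, where $d\nu=(1+\varphi')^q\omega^{-q/2}d\mu$, because $(2-2q/p)s=2$), and \emph{not} to the area-weighted correspondence ``$\{F_{\delta,\mu}(\varphi)(z_n)\}$ weighted by disc areas $\leftrightarrow F_{\delta,\mu}(\varphi)\in L^s(dA)$'' that you assert. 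Under your reading, the implication (c)$\Rightarrow$(b) breaks: $\int_\D F_{\delta,\mu}(\varphi)^s\,dA<\infty$ only yields $\sum_n\tau(z_n)^2F_{\delta,\mu}(\varphi)(z_n)^s<\infty$, and since $\tau(z_n)\to0$ this does not control the unweighted sum that H\"older requires. (This is partly the paper's fault: its own statement of (c) is ambiguous, and in the application the condition is immediately recast as $M^\psi_{1,p,q}(g)\in L^{p/(p-q)}(\D,d\lambda)$, which is the correct reading.) Two smaller repairs: in the necessity step the discs $D_\delta(z_n)$ overlap, so the lower bound must be taken over a disjoint refinement $E_n\subset D_\delta(z_n)$ and the full discs recovered afterwards via the finite multiplicity in Lemma \ref{lem:Lcoverings}(iv); and you can avoid re-deriving an atomic decomposition and the norm $\|F_{a,n}\|_{S^p_\omega}$ in $S^p_\omega$ altogether, since by the Littlewood--Paley formula \eqref{eqnorm} boundedness of $I_\mu$ on $S^p_\omega$ is the same as boundedness of $f\mapsto f'$ from $A^p_\omega$ into $L^q(\mu)$, so one may randomize the existing atoms of Proposition \ref{At1-pp} unchanged and use the derivative estimate of Lemma \ref{BL3} on each disc --- this is visibly what the paper's toolkit (in particular Lemma \ref{BL3}) is set up for.
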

Also, we state the results in the case $0 < q < \infty$ and $p =\infty$ as follows:
\begin{lemma} \label{lem3} 
\begin{enumerate} Let $\omega \in \mathcal{W}$ and    $0<q< p= \infty.$ Let $\mu$ be a positive Borel measure on $\D.$ Then, the following statements qre equivalent:
\item The operator $I_{\mu}: S^p_\omega\rightarrow L^q(\D,d\mu)$  is bounded.
\item The operator $I_{\mu}: S^p_\omega\rightarrow L^q(\D,d\mu)$  is compact.
\item The function 
\begin{equation}\label{eq3}
F_{\delta,\mu}(\varphi)(z)\in L^{1}(\D,dA).
\end{equation}
\end{enumerate}
\end{lemma}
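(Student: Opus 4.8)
The plan is to prove the cycle $(3)\Rightarrow(2)\Rightarrow(1)\Rightarrow(3)$; the implication $(2)\Rightarrow(1)$ is free, and the analytic content lies in the two remaining arrows. Before starting, I would record the elementary dictionary that unwinds condition $(3)$: writing out the averaging function $F_{\delta,\mu}(\varphi)$ and applying Fubini's theorem together with the covering Lemma \ref{lem:Lcoverings}, one has
$$
\int_\D F_{\delta,\mu}(\varphi)(z)\,dA(z)\asymp \int_\D (1+\varphi'(\xi))^q\,\omega(\xi)^{-q/2}\,d\mu(\xi)\asymp \sum_k b_k,\qquad b_k:=\int_{D_\delta(z_k)}(1+\varphi'(\xi))^q\,\omega(\xi)^{-q/2}\,d\mu(\xi),
$$
because each point lies in boundedly many discs of the lattice and $\int_{\{z:\xi\in D_\delta(z)\}}\tau(z)^{-2}\,dA(z)\asymp 1$. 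Thus $(3)$ is equivalent to $\sum_k b_k<\infty$, and the problem reduces to comparing this discrete sum with the operator $I_\mu$.

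For the two easy arrows I would use only the defining pointwise bound of $S^\infty_\omega$, namely $|f(z)|\le \|f\|_{S^\infty_\omega}\,(1+\varphi'(z))\,\omega(z)^{-1/2}$. This gives at once
$$
\int_\D |f(z)|^q\,d\mu(z)\le \|f\|_{S^\infty_\omega}^q\int_\D (1+\varphi'(z))^q\,\omega(z)^{-q/2}\,d\mu(z),
$$
so $(3)\Rightarrow(1)$. For $(3)\Rightarrow(2)$ I would take a sequence $\{f_n\}$ bounded in $S^\infty_\omega$ and tending to zero uniformly on compact subsets (a normal family by Montel's theorem, which is the usual reformulation of compactness here), split $\D$ into $\{|z|\le r\}$ and $\{|z|>r\}$, control the inner part by uniform convergence and the outer part by $\|f_n\|_{S^\infty_\omega}^q\int_{|z|>r}(1+\varphi')^q\omega^{-q/2}\,d\mu$, which is small uniformly in $n$ once $r$ is close to $1$ since the total integral is finite by $(3)$.

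The substance is $(1)\Rightarrow(3)$, for which I would use Luecking's randomization together with Khinchine's inequality (Lemma \ref{Khinchine`s}). The right test functions are the derivatives $F'_{z_k,n}$ of the Borichev functions of Lemma \ref{Borichevlemma}: by Lemma \ref{BL3} one has $|F'_{z_k,n}(z)|\,\omega(z)^{1/2}\asymp 1+\varphi'(z)$ on $D_\delta(z_k)$, so these functions realize the extremal pointwise size of $S^\infty_\omega$ on the corresponding disc. Fixing $n=2$ and Rademacher functions $r_k$, the atomic decomposition Proposition \ref{At1-pp}(b) shows that $g_t=\sum_k r_k(t)F_{z_k,n}$ has $\|g_t\|_{A^\infty_\omega}\lesssim 1$ uniformly in $t$, and then the Littlewood--Paley identity \eqref{eqnorm1} gives $\|h_t\|_{S^\infty_\omega}\lesssim 1$ for $h_t:=g_t'=\sum_k r_k(t)F'_{z_k,n}$. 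Applying boundedness to each $h_t$, integrating in $t\in[0,1]$, and using Fubini and Khinchine to replace the randomized sum by the square function, I would arrive at
$$
\int_\D\Big(\sum_k |F'_{z_k,n}(z)|^2\Big)^{q/2}\,d\mu(z)\lesssim 1.
$$
Bounding the square function from below by a single diagonal term on each $D_\delta(z_k)$ and using the finite overlap of the lattice then yields $\sum_k b_k\lesssim \int_\D(\sum_k|F'_{z_k,n}|^2)^{q/2}\,d\mu\lesssim 1$, which is exactly $(3)$.

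The main obstacle is this last implication, and within it the delicate point is the passage from the randomized estimate to the discrete sum $\sum_k b_k$: one must verify simultaneously that the Rademacher combinations stay uniformly bounded in $S^\infty_\omega$ (for which the unit $\ell^\infty$ bound of the coefficients, special to the case $p=\infty$, is essential), that Khinchine's inequality applies uniformly in $z$ after truncating to finite sums, and that the lower bound of the square function by its diagonal term costs only a constant depending on the finite multiplicity $N$ of the covering. The two easy arrows, by contrast, require nothing beyond the pointwise bound and a routine tail estimate.
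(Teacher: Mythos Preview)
The paper does not give its own proof of this lemma; it simply refers the reader to Section~4 of \cite{HGJ}. Your argument is correct: the Fubini computation that rewrites condition~(3) as $\int_\D (1+\varphi'(\xi))^q\,\omega(\xi)^{-q/2}\,d\mu(\xi)<\infty$ (equivalently $\sum_k b_k<\infty$) makes $(3)\Rightarrow(1)$ and $(3)\Rightarrow(2)$ immediate via the pointwise bound $|f(z)|\le \|f\|_{S^\infty_\omega}(1+\varphi'(z))\omega(z)^{-1/2}$ together with a standard tail estimate, and the hard direction $(1)\Rightarrow(3)$ is handled exactly by the Luecking randomization you describe. The only small points worth making explicit are that one should work with finite Rademacher sums $\sum_{k\le N}r_k(t)F_{z_k,n}$ throughout (so that Khinchine applies and termwise differentiation is trivial) and then let $N\to\infty$ by monotone convergence, and that the finitely many lattice points with $|z_k|\le\rho_0$ contribute a finite amount because $\mu(\D)<\infty$ (test $I_\mu$ on the constant function $1\in S^\infty_\omega$, which lies there since $1=(z)'$ and $z\in A^\infty_\omega$). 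Since the paper states Khinchine's inequality (Lemma~\ref{Khinchine`s}) immediately before this block of embedding lemmas, your approach is exactly the one signaled for \cite{HGJ}, so there is no methodological divergence to report.
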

\subsection{Schatten class operators}
 For a positive compact operator $T$ on a separable Hilbert
space $H$, there exist orthonormal sets $\{e_k\}$ in $H$ such that
$$Tx = \sum_k \lambda_k \langle x, e_k\rangle, \quad x \in H,$$
where the points ${\lambda_k}$ are nonnegative eigenvalues of $T$. This is referred to as the
canonical form of a positive compact operator $T$. For $0 < p < \infty$, a compact
operator $T$ belongs to the Schatten class $\mathcal{S}_p$ on H if the sequence ${\lambda_k}$ belongs to
the sequence space $\ell^p$,
$$\|T\|_{S_p}^p =\sum_k |\lambda_k|^p <\infty.$$
When $1 \leq p < \infty$, $\mathcal{S}_p$ is the Banach space with the above norm and $\mathcal{S}_p$ is a metric
space when $0 < p < 1$. In general, if $T$ is a compact linear operator on $H$, we say
that $T \in \mathcal{S}_p$ if $(T^*T)^{p/2} \in \mathcal{S}_1, 0 < p < \infty$. Moreover,
$$(T^*T)^{p/2} \in \mathcal{S}_1 \iff T^*T \in \mathcal{S}_{p/2}.$$

\section{Boundedness and compactness}\label{Section 3}

In this section we first provide the proof of Theorem \ref{Theorem 1.1} and then show how our results are related to the results of Constantin and Pel\'{a}ez \cite{Co-Pe} on Fock spaces and of Pau and Pel\`{a}ez \cite{PP1} on Bergman spaces.

\subsection{Proof of Theorem 1.1}
  \textbf{(A) Boundedness.} For $0<p\leq q<\infty$, suppose that the operator $C_{(\psi,g)} :A_\omega ^p \longrightarrow A_\omega^q$ is bounded.
   Then, by \eqref{eqnorm}, we have 
   \begin{displaymath}\label{eq1}
   \begin{split}
    \|C_{(\psi,g)} f\|_{A_\omega ^q}^q &\asymp\int_\mathbb{D} |f^\prime(\psi(z))|^q\,|g(\psi(z))|^q\,|\psi^\prime (z)|^q \,\frac{\omega(z)^\frac{q}{2}}{(1+\varphi^\prime(z))^q}\, dA(z)\\
   &=\int_\mathbb{D} |f^\prime (z)|^q \,d\mu_{\psi, \omega,g}= \|f^\prime\|_{L^q (\mathbb{D},d_{\mu_{\psi,\omega,g}})}^{q}.
   \end{split}
  \end{displaymath}
  Hence, $C_{(\psi,g)} :A_\omega ^p \longrightarrow A_\omega^q$ is bounded if and only if $I_\mu : S_\omega ^p \longrightarrow L^q (\mu_\psi,\omega,g)$ is bounded. Using (i) of Lemma \ref{embed2}, this is equivalent to
   $$\sup_{z\in\mathbb{D}} \frac{1}{\tau(z)^{2q/p}}\int_{D_\delta(z)} (1+\varphi^\prime(\xi))^q\, \omega(\xi)^{-q/2} \,d\mu_{\psi,\omega,g}(\xi) <\infty.$$
   By Theorem \ref{thm:CMPQ}, this equivalent to
   $$\sup_{z\in \mathbb{D}} \tau(z)^{2(1-q/p)} \int_\mathbb{D} |k_{q,z} (\xi)|^q \,\omega(\xi)^{q/2}\, d\nu_{\psi,\omega,g}(\xi) <\infty.$$
   Then, by Lemma \ref{embed1}, we obtain
  \begin{displaymath}
   \begin{split}
    \tau(z)^{2(1-q/p)} \int_\mathbb{D} |k_{q,z} (\xi)|^q\, &\omega(\xi)^{q/2} \,d\nu_{\psi,\omega,g}(\xi) \asymp \int_\mathbb{D} |k_{p,z}(\xi)|^q\,\omega(\xi)^{q/2} \,d\nu_{\psi,\omega,g}(\xi)\\
   &=\int_{\mathbb{D}} |k_{p,z} (\psi(\xi))|^q \,|g(\psi(\xi))|^q \,|\psi^\prime(\xi)|^q\, \frac{(1+\varphi^\prime(\psi(\xi)))^q}{(1+\varphi^\prime(\xi))^q} \,\omega(\xi)^{q/2}\, dA(\xi)=M_{1,p,q}^\psi.\label{bdd first case}   
   \end{split}
   \end{displaymath}
   Thus, $C_{(\psi, g)}$ is bounded if and only if $M_{1,p,q}^\psi g(z)\in L^\infty (\mathbb{D},dA).$
   
   The proof that $C_\psi^g$ is bounded if and only if $M^\psi_{0,p,q}(g)\in L^{\infty}(\D,dA)$ follows in a similar fashion.
   
   \textbf{Compactness.} For $0<p\leq q<\infty,$ considering that operator $C_{(\psi,g)} :A_\omega ^p \longrightarrow A_\omega^q$ is compact. Then, by (ii) of Lemma \ref{embed1}, can be used in a similar way for proving compactness.
This means, $\lim_{|z|\rightarrow 1^-}M_{1,p,q}^\psi (g)=0.$

Now suppose that the operator $C_{g^\psi}$ is compact. By \eqref{eqnorm}  
\begin{equation}\label{norm 2nd operator}
 \|C_g^\psi f\|^q_{A^q_{\om}}=\int_{\D}\frac{|f(\psi(z))|^q\,|g(\psi(z))|^q\,|\psi^\prime(z)|^q}{(1+\varphi'(z))^q}\,\,\omega(z)^{\frac{q}{2}}\, dA(z)=\|f\|^q_{L^q(\mu_{\phi,\omega,g})}.   
\end{equation}
 We conclude that $C_g^\psi: A^p_\omega\rightarrow A^q_\omega$ is compact if and only if the measure $\nu_{\phi,\omega,g}$ is a vanishing $q$-Carleson measure for $A_\omega^p$. 
This is equivalent to 
$$
\lim_{|z|\to1^-}\,\tau(z)^{2(1-q/p)}\int_{\D} |k_{q,z}(\xi)|^q\,\omega(\xi)^{q/2}\,d\nu_{\psi,\omega,g}(\xi)=0.
$$
Now, using $(a)$ of Lemma \ref{lem:RK-PE1}, we obtain 
\begin{align*}\label{maincond1}
\tau(z)^{2(1-q/p)}\int_{\mathbb{D}} |K_{q,z}(\xi)|^q\, \omega(\xi)^{q/2} \,dv_{\psi,\omega,q}(\xi)&\asymp \int_{\D}|k_{p,z}(\xi)|^q\, \omega(\xi)^{q/2}\,d\nu_{\psi,\omega,q}(\xi)\\& \asymp    \int_{\mathbb{D}} |k_{p,z}(\psi(\xi))|^q \,  \frac{|g(\psi(z))|^q\,|\psi^\prime(z)|^q}{(1+{\varphi}^{'}(\xi))^q}\,\omega(z)^{q/2}\,dA(\xi)=M^{\psi}_{0,p,q}.
\end{align*}
Therefore, $\lim_{|z|\to1^-}M^\psi_{0,p,q}(g)=0$ if and only if the operator  $C_g^\psi$ is compact.

\textbf{(B)} Suppose that $C_{(\psi,g)}$ is bounded and let $\{f_n \}\subset A_\omega^p$ be a bounded sequence converging to zero uniformly on compact subsets of $\mathbb{D}.$ Now, replacing $f$ by $f_n$ in \eqref{eqnorm}, we get
\begin{equation}\label{eq33}
  \begin{aligned}
      \| C_{(\psi,g)} f_n \| = \| f_n^\prime \|_{L^q(\mu_{\psi,\omega,g})}^q,
  \end{aligned}  
\end{equation}
by compactness of the embedding operator $I_{\mu_{\psi,\omega,g}},$ in Lemma \ref{embed2}, we have
$$\| C_{(\psi,g)} f_n \|_{A_\omega^q}^q \rightarrow 0, \quad\text{as}\qquad n\rightarrow \infty,$$
we obtain the compactness of the operator $C_{(\psi,g)}.$
 Now, when $p<\infty$ we prove that boundedness is equivalent to compactness. Using \eqref{eq33} and Lemma \ref{embed2}, we get $C_{(\psi,g)}$ is bounded if and only if $I_{\mu_{\psi,\omega,g}}: S_\omega^p \rightarrow L^q (\mu_{\psi,\omega,g})$ is bounded if and only if $I_{\mu_{\psi,\omega,g}}: S_\omega^p \rightarrow L^q (\mu_{\psi,\omega,g})$ is compact if and only if the function
$$F_{\delta,\mu_{\phi,\omega,g}}(\varphi)(z):= \frac{1}{\tau(z)^{2q/p}}\int_{D_{\delta}(z)} (1+\varphi'(\xi))^q\omega(\xi)^{-q/2}\,d\mu_{\psi,\omega,g}(\xi)m$$ belongs to $ L^{p/(p-q)}(\D,dA).$ According to Theorem \ref{thm:CMOP}, this is equivalent to 
$$
\int_{\D}|k_{q,z}(\xi)|^q\, \omega(\xi)^{q/2}\,d\nu_{\psi,\omega,g}(\xi) \in  L^{p/(p-q)}(\D,dA),
$$
which is equivalent to $M^\psi_{1,p,q}(g)(z)\in L^{p/(p-q)}(\D,d\lambda),$ where $d\lambda(z)=dA(z)/\tau(z)^2,$ because of 
\begin{displaymath}
\begin{split}
 \int_{\D}  G_{q}(\nu^q_{\psi,\omega,g})^{p/p-q}\,dA(z)& = \int_{\D}\Big( \tau(z)^{2(1-q/p)}\, G_{q}(\nu^q_{\psi,\omega,g})\Big)^{\frac{p}{p-q}}\,d\lambda(z)\\& \asymp \int_{\D}\Big( \tau(z)^{2(1-q/p)}\, \int_{\D} |k_{p,z}(\xi)|^q\,\omega(\xi)^{q/2}\,d\nu_{\psi,\omega,g}(\xi)\Big)^{\frac{p}{p-q}}\,d\lambda(z)\\& =\int_{\D}\Big( \tau(z)^{2(1-q/p)}\, \int_{\D} |k_{p,z}(\xi)|^q\,(1+\varphi'(\xi))^q\,d\mu_{\psi,\omega,g}(\xi)\Big)^{\frac{p}{p-q}}d\lambda(z)  \\&=  \int_{\D} M^\psi_{1,p,q}(g)(z)^{p/p-q}\,d\lambda(z),
\end{split}
\end{displaymath}
which proves boundedness of the operator $C_{(\psi,g)}: A^p_\omega\rightarrow A^q_\omega$.

Now, for $0<q<p=\infty,$ we suppose that $C_{(\psi,g)}: A^\infty_\omega\rightarrow A^q_\omega$ is bounded. Let $\{f_n\}\subset A^\infty_\omega$ be a bounded sequence converging  to zero uniformly on compact subsets of $\D$, we get
\begin{equation}\label{eq34}
  \begin{aligned}
      \| C_{(\psi,g)} f_n \| = \| f_n^\prime \|_{L^q(\mu_{\psi,\omega,g})}^q,
  \end{aligned}  
\end{equation}
by compactness of the embedding operator $I_{\mu_{\psi,\omega,g}},$ in Lemma \ref{embed2}, we obtain the compactness of the operator $C_{(\psi,g)}.$
 Now, we prove that boundedness is equivalent to $M_{1,p,q}^\psi (g)\in L^s(\mathbb{D},d\lambda)$ when $p=\infty.$ By \eqref{eq34} and Lemma \ref{lem3}, we get $C_{(\psi,g)}$ is bounded if and only if $I_{\mu_{\psi,\omega,g}}: S_\omega^p \rightarrow L^q (\mu_{\psi,\omega,g})$ is bounded if and only if $I_{\mu_{\psi,\omega,g}}: S_\omega^p \rightarrow L^q (\mu_{\psi,\omega,g})$ is compact if and only if the function
$$F_{\delta,\mu_{\phi,\omega,g}}(\varphi)(z):= \frac{1}{\tau(z)^{2}}\int_{D_{\delta}(z)} (1+\varphi'(\xi))^q\,\omega(\xi)^{-q/2}\,d\mu_{\psi,\omega,g}(\xi),$$ belongs to $ L^{p/(p-q)}(\D,dA).$ According to Theorem \ref{thm:CMOP2}, this is equivalent to 
$$
\int_{\D}|k_{q,z}(\xi)|^q\, \omega(\xi)^{-q/2}\,d\nu_{\psi,\omega,g}(\xi) \in  L^1(\D,dA),
$$
which is equivalent to $M^\psi_{1,p,q}(g)(z)\in L^1(\D,d\lambda),$ where $d\lambda(z)=dA(z)/\tau(z)^2.$ Because of 
\begin{displaymath}
\begin{split}
M^\psi_{1,p,q}\asymp \tau(z)^{2(1-q/p)}\,\int_\mathbb{D}|k_{p,z}(\xi)|^q\,\omega(\xi)^{q/2}\,d\nu^q_{\psi,\omega,g}(\xi).
\end{split}
\end{displaymath}

Let $ 0<q< p< \infty$ and suppose that $C_g^\psi: A^p_\omega\rightarrow A^q_\omega$ is bounded. To prove that $C_g^\psi$ is compact, notice first that \eqref{norm 2nd operator} implies that the measure $\nu_{\psi,\omega,g}$ is a $q$-Carleson measure for $A_\omega^p$. Thus, by Theorem \ref{thm:VCMQP},  $\nu_{\psi,\omega,g}$ is a vanishing $q-$Carleson measure for $A_\omega^p$. By Theorem \ref{Theorem 1.1} (A), we have 
$$
\lim_{n\rightarrow\infty}\|C_g^\psi f_n\|^q_{A^q_{\om}}=0
$$
for any sequence $\{f_n\}\subset A^p_\omega$ that converges  to zero uniformly on compact subsets of $\D.$ Now Lemma $3.7$ of \cite{tija} shows that $C_g^\psi$ is compact.

Next we prove that boundedness is equivalent to $M_{0,p,q}^\psi (g)\in L^s(\mathbb{D},d\lambda)$ when $p<\infty$. Assume first that $M_{0,p,q}^\psi (g)\in L^s(\mathbb{D},d\lambda)$. Then
\begin{equation}\label{equ111}
\begin{split}
\int_{\D} G_{q}(v_{\psi,\omega,q})(z)^{p/(p-q)} dA(z)&=\int_{\D} \Big(\tau(z)^{2(1-\frac{q}{p})}G_{q}(v_{\psi,\omega,q})(z)\Big)^{p/(p-q)}d\lambda(z)\asymp \int_{\D}M^\psi_{0,p,q}(g)^{p/(p-q)}d\lambda(z).
\end{split}
\end{equation}
According to Theorem \ref{thm:CMOP}, $\nu_{\psi,q}$ is a $q$-Carleson measure for $A_{\omega}^p$. Then, by (\ref{eqnorm}) for any function $f\in A_{\omega}^p$, we get
$$
\|C_g^{\psi} \,f_n\|_{A_\omega^q}^q\asymp\int_{\D} |f(z)|^q\, {\omega(z)^{q/2}}\,d\nu_{\psi,\omega,g}(z)\lesssim \|f\|_{A_{\omega}^p}^q.
$$
Therefore, the operator $C_g^\psi$ is bounded.

Conversely, assume that the operator $C_g^\psi: A^p_\omega\rightarrow A^q_\omega$ is bounded. Then, we have
 $$
 \|C_g^\psi \,f\|_{A_{\omega}^q}^q \asymp\int_{\D} |f(z)|^q\, {\omega(z)^{q/2}}\,d\nu_{\psi,\omega,g}(z), \quad \text{for any function}\quad f\in A_{\omega}^p.
 $$
 This together with our assumption, implies that the measure $\nu_{\psi,\omega,g}$ is a $q-$Carleson measure for $A_\omega^p$. According to Theorem \ref{thm:CMOP}, $\nu_{\psi,\omega,g}$ belongs to $L^{p/(p-q)}(\D,dA)$. Combining this with \ref{equ111}, we conclude that $M^\psi_{0,p,q}(g)\in L^{p/(p-q)}(\D,d\lambda).$
 
 Let $0<q<p=\infty$ and suppose that $C_g^\psi: A^\infty_\omega\rightarrow A^q_\omega$ is bounded, that is, for any  function $f\in A_\omega^p$, we have 
\begin{equation*}
\begin{split}
\|C_g^\psi f\|^q_{A^q_{\om}}=\int_{\D}\frac{|f(\psi(z))|^q|g(\psi(z))|^q|\psi^\prime(z)|^q}{(1+\varphi'(z))^q}\,\,\omega(z)^{\frac{q}{2}}\, dA(z) \lesssim \|f\|_{A_\omega^\infty}^q,
\end{split}
\end{equation*}
We show that $C^\psi_g$ is compact. Using Theorem \ref{thm:CMOP2}, conclude that the measure  $\nu_{\psi,\omega,g}$ is a $q-$Carleson measure for $A_\omega^\infty$. Therefore, by Theorem \ref{thm:VCMQP},  $\nu_{\psi,\omega,g}$ is a vanishing $q-$Carleson measure for $A_\omega^\infty$. As in the previous case, this shows the compactness of the operator $C_g^\psi.$

Next we prove that boundedness and $M_{0,p,q}^\psi (g)\in L^s(\mathbb{D},d\lambda)$ are equivalent when $p=\infty$. First, we assume that that the condition $M_{0,p,q}^\psi (g)\in L^s(\mathbb{D},d\lambda)$ holds. Then
\begin{equation}\label{eq111}
\begin{split}
\int_{\D} G_{q}(v_{\psi,\omega,q})(z) \,dA(z)=\int_{\D} \Big(\tau(z)^{2(1-\frac{q}{p})}G_{q}(v_{\psi,\omega,q})(z)\Big)\,d\lambda(z) \asymp \int_{\D}M^\psi_{0,p,q}(g)\,d\lambda(z).
\end{split}
\end{equation}
According to Theorem \ref{thm:CMOP2}, $\nu_{\psi,q}$ is a $q-$Carleson measure for $A_{\omega}^\infty$. Then for any function $f\in A_{\omega}^\infty$, we get
$$
\|C_g^\psi\,f_n\|_{A_{\omega}^{q}}^q\asymp \int_{\D} |f(z)|^q\, {\omega(z)^{q/2}}\,d\nu_{\psi,\omega,g}(z)\lesssim \|f\|_{A_{\omega}^\infty}^q.
$$
Thus, the operator $C_g^\psi$ is bounded.

Conversely, suppose the operator $C_g^\psi: A^\infty_\omega\rightarrow A^q_\omega$ is bounded. Then, for any function  $f\in A_{\omega}^\infty$, we have
 $$
 \|C_g^\psi \,f\|_{A_{\omega}^q}^q=\int_{\D} |f(z)|^q\, {\omega(z)^{q/2}}\,d\nu_{\psi,\omega,g}(z).
 $$
 This together with our assumption, implies that the measure $\nu_{\psi,\omega,g}$ is a $q$-Carleson measure for $A_\omega^\infty$. According to Theorem \ref{thm:CMOP2}, $\nu_{\psi,\omega,g}$ belongs to $L^{1}(\D,dA)$. Combining this with (\ref{eq111}), we conclude that $M^\psi_{0,p,q}(g)\in L^{1}(\D,d\lambda).$

\textbf{(C) Boundedness.} For $0<p<\infty$, assume that the equation \eqref{bdd 2nd case} holds. Next, using our assumption and \eqref{eqnorm1}, we have
   \begin{equation}
   \begin{aligned}
       \|C_{(\psi,g)} f\|_{A_\omega^\infty} 
    &\asymp\sup_{z\in\mathbb{D}} |f^\prime(\psi(z))|\,|g(\psi(z))|\,|\psi^\prime(z)|\,\frac{\omega(z)^{1/2}}{(1+\varphi^\prime(z))}\\
   &\leq \sup_{z\in\mathbb{D}} N_{1,p,\infty}^{\psi,1/p} (g)(z)\, \sup_{z\in \mathbb{D}} \frac{|f^\prime (\psi(z))|\,\omega(\psi(z))^{1/2}}{(1+\varphi^\prime(\psi(z)))}\,\triangle\varphi(\psi(z))^{-1/p}\\
   &\leq \sup_{z\in\mathbb{D}} N_{1,p,\infty}^{\psi,1/p}(g)(z)\, \sup_{z\in \mathbb{D}} \frac{|f^\prime (\psi(z))|\,\omega(\psi(z))^{1/2}}{(1+\varphi^\prime(\psi(z)))}\, \tau(\psi(z))^{2/q}.
   \end{aligned}
   \end{equation}
   By Lemma \ref{DERI}, we get
   \begin{equation}
       \begin{aligned}
 \|C_{(\psi,g)} f\|_{A_\omega^\infty} 
 &\lesssim\sup_{z\in\mathbb{D}} \left(\int_{D_\delta (\psi(z))} 
  \frac{|f^\prime(\xi)|^p\, \omega(\xi)^{p/2}}{(1+\varphi^\prime(\xi))^p}\,dA(\xi)\right)^{1/p}\\
  &\leq \left(\int_\mathbb{D}  \frac{|f^\prime(\xi)|^p \,\omega(\xi)^{p/2}}{(1+\varphi^\prime(\xi))^p}\,dA(\xi)\right)^{1/p}\lesssim \|f\|_{A_\omega ^p},
       \end{aligned}
   \end{equation}
which implies that $C_{(\psi,g)}$ is bounded.

Conversely, we suppose that $C_{(\psi,g)} :A_\omega ^p \longrightarrow A_\omega^\infty$ is bounded. Taking $\xi\in\mathbb{D}$ such that $|\psi(\xi)|>\rho_0$, we consider the function $f_{\psi(\xi),n,p}$ given by $f_{\psi(\xi),n,p}:=\frac{F_{\psi(\xi),n,p}}{\tau(\psi(\xi))^{2/p}},$ where $F_{\psi(\xi)),n,p}$ is the test function in Lemma~\ref{Borichevlemma}. Notice that $f_{\psi(\xi),n,p}\in A_\omega ^p$ with $\|f_{\psi(\xi),n,p}\|\asymp 1.$ By our assumption, we get
\begin{equation}
    \begin{aligned}
    \infty &>\|C_{(\psi,g)} {(f_{\psi(\xi),n,p})}\|_{A_\omega ^\infty}\geq \sup_{z\in\mathbb{D}} \frac{|f_{\psi(\xi),n,p}^\prime (\psi(z))|\,|g(\psi(z)|\,|\psi^\prime(z)|}{(1+\varphi^\prime(z))} \,\omega(z)^{1/2}\\
    &\geq \sup_{z\in\mathbb{D}}\frac{|F_{\psi(\xi),n,p}^\prime (\psi(z))|\,|g(\psi(z)|\,|\psi^\prime(z)|}{\tau(\psi(\xi))^{1/2}(1+\varphi^\prime(z))}\, \omega(z)^{1/2}\geq \sup_{\xi\in\mathbb{D}}\frac{|F_{\psi(\xi),n,p}^\prime (\psi(\xi))|\,|g(\psi(\xi)|\,|\psi^\prime(\xi)|}{\tau(\psi(\xi))^{1/2}(1+\varphi^\prime(\xi))} \,\omega(\xi)^{1/2}.
    \end{aligned}
\end{equation}
Now, by Lemma (\ref{BL3}),
$$|F_{\psi(\xi),n,p}^\prime (z)|\,\omega(z)^{1/2}\asymp (1+\varphi^\prime(z)), \quad z\in D_\delta (\psi(\xi)),$$
so we obtain
\begin{equation}
\begin{aligned}
    \infty>\|C_{(\psi,g)} {(f_{\psi(\xi),n,p})}\|_{A_\omega ^\infty}&\geq
     |g(\psi(\xi))|\,|\psi^\prime(\xi)|\,\frac{(1+\varphi^\prime(\psi(\xi)))}{(1+\varphi^\prime(\xi))}\,\frac{\omega(\xi)^{1/2}}{\omega(\psi(\xi))^{1/2}}\tau(\psi(\xi))^{-2/p}\\
     &\asymp  |g(\psi(\xi))|\,|\psi^\prime(\xi)|\,\frac{(1+\varphi^\prime(\psi(\xi)))}{(1+\varphi^\prime(\xi))}\,\frac{\omega(\xi)^{1/2}}{\omega(\psi(\xi))^{1/2}}\,\triangle\varphi(\psi(\xi))^{1/p}\label{bdd 2}= N_{1,p,\infty}^{\psi,1/p}(\xi).
\end{aligned}
\end{equation}
On the other hand, by taking $f(z)=z$ and using the boundedness of the operator $C_{(\psi,g)} :A_\omega ^p \longrightarrow A_\omega^\infty,$ we get
$$\|C_{(\psi,g)}\|_{A_\omega ^\infty} =\sup_{z\in\mathbb{D}}|g(\psi(z))|\,|\psi^\prime(z)|\,\frac{\omega(z)^{1/2}}{(1+\varphi^\prime(z))}\lesssim \|f\|_{A_\omega ^p} <\infty.$$
Hence, in the case of $|\psi(\xi)|\leq\rho_0, \xi \in\mathbb{D}$, we have
\begin{align*}
   |N_{1,p,\infty}^{\psi,1/p} (\xi)| &=|g(\psi(\xi))|\,|\psi^\prime(\xi)|\,\frac{(1+\varphi^\prime(\psi(\xi)))}{(1+\varphi^\prime(\xi))}\,\frac{\omega(\xi)^{1/2}}{\omega(\psi(\xi))^{1/2}}\,\triangle\varphi(\psi(\xi))^{1/p}\\
    &\asymp |g(\psi(\xi))|\,|\psi^\prime(\xi)|\,\frac{(1+\varphi^\prime(\psi(\xi)))}{(1+\varphi^\prime(\xi))}\,\frac{\omega(\xi)^{1/2}}{\omega(\psi(\xi))^{1/2}}\,\tau(\psi(\xi))^{-2/p}\leq R_1 \,|g(\psi(\xi))|\,|\psi^\prime(\xi)|\frac{\omega(\xi)^{1/2}}{(1+\varphi^\prime(\xi))}<\infty,  
\end{align*}
where $R_1= \sup_{|\psi(\xi)|\leq\rho_0}\left\{(1+\varphi^\prime(\psi(\xi)))\,\omega(\psi(\xi))^{-1/2} \,\tau(\psi(\xi))^{-2/p}\right\}<\infty.$
It remains to combine this with \eqref{bdd 2}.
The case $p=\infty$ can be proved in a similar manner. 

\textbf{Compactness.} For $0<p<\infty$, assume that the operator $C_{(\psi,g)} :A_\omega ^p \longrightarrow A_\omega^\infty$ is compact. Then, $f_{\psi(\xi),n,p}\, \text{belongs to}\, A_\omega^p$ and converges to zero uniformly on compact subsets of $\mathbb{D}$ as $|\psi(\xi)|\rightarrow 1$ (see Lemma 3.1 in \cite{PP1}), so 
 $\| C_{(\psi,g)} (f_{\psi(\xi),n,p}) \|_{A_\omega^\infty}\rightarrow 0$  when $|\psi(\xi)|\rightarrow 1.$
 Now, by \eqref{bdd 2},
 $$ 0=\lim_{|\psi(\xi)|\rightarrow {1^-}} \| C_{(\psi,g)} (f_{\psi(\xi),n,p}) \| \gtrsim \lim_{|\psi(\xi)|\rightarrow {1^-}} N_{g,\psi,1/p} (\xi),$$
 this achieves the desired result.
 
In contrast, let $\{f_n\}$ be a bounded sequence of function in $A_\omega ^p$ converging to zero uniformly on compact subset of $\mathbb{D}.$ Since compactness condition in (C) holds, for any $\varepsilon > 0,$ there exists $r_0 >0$ such that
  $$N_{1,p,\infty}^{\psi,1/p}(g)(\xi) = |g(\psi(\xi))|\,|\psi^\prime(\xi)|\,\frac{(1+\varphi^\prime(\psi(\xi)))}{(1+\varphi^\prime(\xi))}\,\frac{\omega(\xi)^{1/2}}{\omega(\psi(\xi))^{1/2}} \,\triangle\varphi(\psi(\xi))^{1/p}<\varepsilon,$$
where $|\psi(\xi)|>r_0.$ Then, by \eqref{Eq-gamma}, we have
\begin{equation}
\begin{split}
   & \frac{|f_n^\prime (\psi(\xi))|\,|g(\psi(\xi))|\,|\psi^\prime(\xi)|}{(1+\varphi^\prime(\xi))}\,\omega(\xi)^{1/2}
    \\ & \lesssim\left( \frac{1}{\tau(\psi(\xi))^2} \int_{D_\delta (\psi(\xi))}\frac{|f_n ^\prime (\psi(s))|^p}{(1+\varphi^\prime(\psi(s)))^p}\,\omega(\psi(s))^{p/2}\,\triangle\varphi(\psi(\xi))dA(s)) \right)^{1/p} N_{1,p,\infty}^{\psi,1/p}(\xi)\\
   & \lesssim \| f_n \|_{A_\omega^p} N_{1,p,\infty}^{\psi,1/p} (\xi) <\varepsilon. \label{eq epsalon}
\end{split}
\end{equation}
For $|\psi(\xi)|\geq r_0$, we have
$$\sup_{|\psi(\xi)|\leq r_0} \frac{|f^\prime(\psi(\xi))|\,|g(\psi(\xi))|\,|\psi^\prime(\xi)|}{(1+\varphi^\prime(\xi))}\,\omega(\xi)^{1/2} \lesssim \sup_{|\psi(\xi)|\leq r_0}|f^\prime (\psi(\xi))|\rightarrow 0,\quad \text{as}\,\quad n\rightarrow \infty,$$
and also the sequence of function $f_n ^\prime$ converges to zero uniformly on compact subset of $\mathbb{D}$, see Lemma (\ref{DERI}). Combining this with \eqref{eq epsalon} gives
$$\| C_{(\psi,g)} (f_n) \|_{A_\omega^\infty} \asymp \frac{|f_n ^\prime(\psi(z))|\,|g(\psi(z))|\,|\psi^\prime(z)|}{(1+\varphi^\prime(z))}\,\omega(z)^{1/2}\rightarrow 0, \quad as\quad n\rightarrow \infty,$$
which means that the operator $C_{(\psi,g)}: A_\omega^p \rightarrow A_\omega^\infty$ is compact.

The case $p=\infty$ can be proved similarly. Also, the proof of boundedness and compactness of the operator $C_g^\psi $ when $0<p\leq\infty$ is similar to that of the operator $C_{\psi,g}$, and hence we omit the details.

\subsection{Additional results on boundedness and compactness}

The next result gives a necessary condition for the operator $C_{(\psi,g)}: A^p_\omega\rightarrow A^q_\omega$ to be bounded or compact when $0<p,q<\infty.$

\begin{proposition} Let $ 0<p, q< \infty.$ Suppose that  $\omega \in \mathcal{W},$  $\psi$ is an analytic self-map of $\D,$ and $g$ is an analytic function on $\D.$ 
\begin{enumerate} 
\item[{\rm (i)}]	 The  operator $C_{(\psi,g)}: A^p_\omega\rightarrow A^q_\omega$ is bounded, then  
\begin{equation}\label{eq-c1}
\sup_{z\in \mathbb{D}}|g(z)||\psi^\prime(z)|\frac{\tau(z)^{2/q}}{\tau(\psi(z))^{2/p}}\frac{(1+\varphi'(\psi(z))}{(1+\varphi'(z))}\frac{\omega(z)^{1/2}}{\omega(\psi(z))^{1/2}}<\infty.
\end{equation}

\item[{\rm (ii)}]The  operator $C_{(\psi,g)}: A^p_\omega\rightarrow A^q_\omega$ is compact, then
\begin{equation}\label{eq-nn2}
\lim_{|\psi(z)|\to1^-} 
|g(z)||\psi^\prime(z)|\frac{\tau(z)^{2/q}}{\tau(\psi(z))^{2/p}}\frac{(1+\varphi'(\psi(z))}{(1+\varphi'(z))}\frac{\omega(z)^{1/2}}{\omega(\psi(z))^{1/2}}=0.
\end{equation}
\end{enumerate}
\end{proposition}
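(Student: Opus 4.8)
The plan is to test boundedness and compactness against the normalized bumps $f_{a,n,p}=F_{a,n}/\tau(a)^{2/p}$ of Lemma~\ref{Borichevlemma}, evaluated at the one point where each bump is most concentrated, and to turn the resulting control of $\|C_{(\psi,g)}f_{a,n,p}\|_{A_\omega^q}$ into a pointwise estimate by means of a derivative sub-mean-value inequality. The computation rests on the identity $(C_{(\psi,g)}f)'(z)=f'(\psi(z))\,g(\psi(z))\,\psi'(z)$, so that by the Littlewood--Paley formula \eqref{eqnorm} the $A_\omega^q$-norm of $C_{(\psi,g)}f$ measures exactly the weighted size of this derivative.

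First I would record the pointwise derivative estimate that drives everything: applying \eqref{Eq-gamma} to the analytic function $h'$ with $p=q$, $\beta=q/2$ and $\gamma=q$, and then \eqref{eqnorm}, gives for every $h\in A_\omega^q$
\[
|h'(w)|\,\frac{\omega(w)^{1/2}}{1+\varphi'(w)}\lesssim \frac{1}{\tau(w)^{2/q}}\,\|h\|_{A_\omega^q},\qquad w\in\D .
\]
For part (i), fix $w\in\D$ with $|\psi(w)|>\rho_0$ and put $a=\psi(w)$. Since $\|f_{a,n,p}\|_{A_\omega^p}\asymp 1$, boundedness yields $\|C_{(\psi,g)}f_{a,n,p}\|_{A_\omega^q}\lesssim 1$. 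Applying the displayed inequality to $h=C_{(\psi,g)}f_{a,n,p}$ and noting that $h'(w)=F'_{a,n}(a)\,\tau(a)^{-2/p}\,g(\psi(w))\,\psi'(w)$, the crucial point is that $a$ is the \emph{centre} of the disc on which Lemma~\ref{BL3} is valid, so $|F'_{a,n}(a)|\,\omega(a)^{1/2}\asymp 1+\varphi'(a)$. Substituting this and rearranging produces precisely the bound \eqref{eq-c1} on the set $\{|\psi(w)|>\rho_0\}$.

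For the remaining region $|\psi(w)|\le\rho_0$ the bumps are not available, so I would test with $f(z)=z\in A_\omega^p$: here $(C_{(\psi,g)}f)'(z)=g(\psi(z))\,\psi'(z)$, and the same pointwise estimate gives $|g(\psi(w))|\,|\psi'(w)|\,\omega(w)^{1/2}(1+\varphi'(w))^{-1}\tau(w)^{2/q}\lesssim 1$; multiplying by the factor $(1+\varphi'(\psi(w)))\,\omega(\psi(w))^{-1/2}\tau(\psi(w))^{-2/p}$, which is bounded above on the compact disc $\overline{D(0,\rho_0)}$, recovers \eqref{eq-c1} there as well, exactly as with the constant $R_1$ in the proof of Theorem~\ref{Theorem 1.1}(C). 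Part (ii) follows by rerunning the same argument with the single change that, for a compact operator, $\|C_{(\psi,g)}f_{a,n,p}\|_{A_\omega^q}\to 0$ as $|a|\to 1^-$ (because $f_{a,n,p}$ is bounded in $A_\omega^p$ and tends to $0$ uniformly on compacta); letting $|\psi(w)|\to 1^-$ in the pointwise lower bound then gives \eqref{eq-nn2}.

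I expect the only genuine obstacle to be the region $|\psi(w)|\le\rho_0$: it is \emph{not} compact in the $w$-variable, so it cannot be disposed of by a crude continuity argument, and one must instead extract the bound there from boundedness directly through the $f(z)=z$ test and the boundedness of the weight data on $\overline{D(0,\rho_0)}$. The remaining steps are routine once the derivative sub-mean-value inequality is in place and the test function is evaluated at the centre of its own concentration disc.
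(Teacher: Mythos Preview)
Your proposal is correct and follows essentially the same route as the paper: test with the normalized bumps $f_{\psi(w),n,p}$, use the sub-mean-value inequality \eqref{Eq-gamma} to pass from $\|C_{(\psi,g)}f_{\psi(w),n,p}\|_{A_\omega^q}$ to the pointwise value at $w$, and invoke Lemma~\ref{BL3} at the centre $a=\psi(w)$. Your treatment of the region $\{|\psi(w)|\le\rho_0\}$ via the test function $f(z)=z$ is in fact more complete than the paper's own proof, which at that point simply asserts the supremum is finite without supplying the argument you give.
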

\begin{proof}
Let us start first by proving that $(i)$. Suppose that the operator $C_{(\psi,g)}$ is bounded. Let $\xi\in\D$ such that $|\psi(\xi)|>\rho_0,$ we consider $ F_{\psi(\xi),n}$ is the test function defined in Lemma \ref{Borichevlemma}. By \eqref{Eq-gamma}, we have 
\begin{equation*}
\begin{split}
\|F_{\psi(\xi),n,p}\|^q_{A^p_{\om}} &\gtrsim\|C_{(\psi,g)}F_{\psi(\xi),n,p}\|^q_{A^q_{\om}}=\int_{\D}\frac{|F'_{\psi(\xi),n,p}(\psi(z))|^q}{(1+\varphi'(z))^q}\,|g(\psi((z))|^q |\psi^\prime (z)|^q\,\omega(z)^{\frac{q}{2}}\, dA(z)\\
&\gtrsim \tau(\xi)^2\frac{|F'_{\psi(\xi),n,p}(\psi(\xi))|^q}{(1+\varphi'(\xi))^q}\,|g(\psi(\xi))|^q |\psi^\prime (\xi)|^q\,\omega(\xi)^{\frac{q}{2}}.
\end{split}
\end{equation*}
Using Lemma \ref{BL3}, we get
\begin{equation*}
\begin{split}
\|F_{\psi(\xi),n,p}\|^q_{A^p_{\om}} 
&\gtrsim \tau(\xi)^2\,|g(\psi(\xi))|^q \,|\psi^\prime (\xi)|^q\,\frac{(1+\varphi'(\psi(\xi)))^q}{(1+\varphi'(\xi))^q}\,\frac{\omega(\xi)^{\frac{q}{2}}}{\omega(\psi(\xi)^{\frac{q}{2}}}.
\end{split}
\end{equation*}
 By Lemma \ref{Borichevlemma}, we obtain 
\begin{equation}\label{eqp1}
\begin{split}
1\gtrsim|g(\psi(\xi))|\,|\psi^\prime (\xi)|^q\, \frac{\tau(\xi)^{2/q}}{\tau(\psi(\xi))^{2/p}}\,\frac{(1+\varphi'(\psi(\xi)))}{(1+\varphi'(\xi))}\,\frac{\omega(\xi)^{\frac{1}{2}}}{\omega(\psi(\xi))^{\frac{1}{2}}}.
\end{split}
\end{equation}
On the other hand, for $|\psi(\xi)|\leq \rho_0$, we have 
$$
\sup_{\psi(\xi)\le\rho_0 }|g(\psi(\xi))|\,|\psi^\prime (\xi)|\, \frac{\tau(\xi)^{2/q}}{\tau(\psi(\xi))^{2/p}}\,\frac{(1+\varphi'(\psi(\xi)))}{(1+\varphi'(\xi))}\,\frac{\omega(\xi)^{\frac{1}{2}}}{\omega(\psi(\xi))^{\frac{1}{2}}}<\infty.
$$
This together with \eqref{eqp1}, completes the proof of (i).

Now, we prove (ii). Suppose that the operator $C_{(\psi,g)}$ is compact. Taking $\xi\in\D$ such that $|\psi(\xi)|>\rho_0$ and  the  bounded sequence 
$$\Big\{ f_{\psi(\xi),n,p}:=\frac{F_{\psi(\xi),n,p}}{\tau(\psi(\xi))^{2/p}},\quad\text{for}\quad |\psi(\xi)|>\rho_0\Big\} $$
of $A^p_\omega$ that converges uniformly to zero on compact subsets of $\D$ as $|\psi(\xi)|\to 1.$ By \eqref{Eq-gamma} and Lemma \ref{BL3}, we get 
\begin{equation*}
\begin{split}
\|C_{(\psi,g)}\,f_{\psi(\xi),n,p}\|^q_{A^q_{\om}}
&=\int_{\D}\frac{|f'_{\psi(\xi),n,p}(\psi(z))|^q}{(1+\varphi'(z))^q}\,|g(\psi(z))|^q |\psi^\prime (z)|^q\,\omega(z)^{\frac{q}{2}}\, dA(z)\\
&\gtrsim \tau(\xi)^2\,\frac{|f'_{\psi(\xi),n,p}(\psi(\xi))|^q}{(1+\varphi'(\xi))^q}\,|g(\psi(\xi))|^q |\psi^\prime (\xi)|^q\,\omega(\xi)^{\frac{q}{2}}\\
&\gtrsim|g(\psi(\xi))|^q\,|\psi^\prime (\xi)|^q \frac{\tau(\xi)^2}{\tau(\psi(\xi))^{2q/p}}\frac{(1+\varphi'(\psi(\xi)))^q}{(1+\varphi'(\xi))^q}\,\frac{\omega(\xi)^{\frac{q}{2}}}{\omega(\psi(\xi))^{\frac{q}{2}}}.
\end{split}
\end{equation*}
Since $C_{(\psi,g)}$ is compact, we obtain the desired result and the proof is complete.
\end{proof}

 Consequently, in the next result, we show that the more useful, necessary conditions for the boundedness and compactness of the operators $C_{\psi,g}$ and $C_g^\psi$ for any $0<p,q<\infty.$
 
 \begin{proposition} Let $ 0<p, q< \infty.$ Suppose that  $\omega \in \mathcal{W},$  $\psi$ is an analytic self-map of $\D,$ and $g$ is an analytic function on $\D.$ 
\begin{enumerate} 
\item[{\rm (i)}]	 The  operator $C_{(\psi,g)}: A^p_\omega\rightarrow A^q_\omega$ is bounded, then  
\begin{equation}\label{eq-c11}
\sup_{z\in \mathbb{D}}|g(\psi(z))|\,|\psi^\prime(z)|\,\frac{\tau(z)^{2/q}}{\tau(\psi(z))^{2/p}}\frac{(1+\varphi'(\psi(z))}{(1+\varphi'(z))}\,\frac{\omega(z)^{1/2}}{\omega(\psi(z))^{1/2}}\in L^\infty(\mathbb{D},dA).
\end{equation}

\item[{\rm (ii)}]The  operator $C_{(\psi,g)}: A^p_\omega\rightarrow A^q_\omega$ is compact, then
\begin{equation}\label{eq-n2}
\lim_{|\psi(z)|\to1^-} 
|g(\psi(z))|\,|\psi^\prime(z)|\frac{\tau(z)^{2/q}}{\tau(\psi(z))^{2/p}}\frac{(1+\varphi'(\psi(z))}{(1+\varphi'(z))}\,\frac{\omega(z)^{1/2}}{\omega(\psi(z))^{1/2}}=0.
\end{equation}
\end{enumerate}
\end{proposition}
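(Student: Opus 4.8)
The plan is to prove both necessary conditions by the test-function argument already employed for the preceding proposition, observing that its computation produces \emph{exactly} the quantities in \eqref{eq-c11} and \eqref{eq-n2}: by the chain rule $(C_{(\psi,g)}f)'(z)=f'(\psi(z))\,g(\psi(z))\,\psi'(z)$, so the symbol $g$ always enters composed with $\psi$, and the pointwise estimate \eqref{eqp1} together with the compactness bound derived there are precisely the pointwise forms of \eqref{eq-c11} and \eqref{eq-n2}. Concretely, I would fix $n\ge 2$ and, for $\xi\in\D$ with $|\psi(\xi)|>\rho_0$, use the normalized test function $f_{\psi(\xi),n,p}:=F_{\psi(\xi),n,p}/\tau(\psi(\xi))^{2/p}$ of Lemma~\ref{Borichevlemma}, which satisfies $\|f_{\psi(\xi),n,p}\|_{A^p_\om}\asymp 1$.

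For part (i), assume $C_{(\psi,g)}:A^p_\om\to A^q_\om$ is bounded. Applying the Littlewood--Paley formula \eqref{eqnorm} to $C_{(\psi,g)}f_{\psi(\xi),n,p}$ and setting $h:=(C_{(\psi,g)}f_{\psi(\xi),n,p})'$, I would write
\[
\|C_{(\psi,g)}f_{\psi(\xi),n,p}\|_{A^q_\om}^q\asymp\int_\D |h(z)|^q\,\frac{\om(z)^{q/2}}{(1+\varphi'(z))^q}\,dA(z),
\]
where $h(z)=f_{\psi(\xi),n,p}'(\psi(z))\,g(\psi(z))\,\psi'(z)$ is holomorphic. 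The crucial move is the \emph{reverse} estimate: restricting the integral to $D_\delta(\xi)$ and applying the local sub-mean value inequality \eqref{Eq-gamma} to $h$ (with $p=q$, $\beta=q/2$, $\gamma=q$) gives
\[
\tau(\xi)^2\,|h(\xi)|^q\,\frac{\om(\xi)^{q/2}}{(1+\varphi'(\xi))^q}\lesssim \|C_{(\psi,g)}f_{\psi(\xi),n,p}\|_{A^q_\om}^q\lesssim \|f_{\psi(\xi),n,p}\|_{A^p_\om}^q\asymp 1.
\]
Since $|h(\xi)|=|f_{\psi(\xi),n,p}'(\psi(\xi))|\,|g(\psi(\xi))|\,|\psi'(\xi)|$, I would then invoke Lemma~\ref{BL3} at the centre $\psi(\xi)$ to replace $|F_{\psi(\xi),n,p}'(\psi(\xi))|\,\om(\psi(\xi))^{1/2}$ by $1+\varphi'(\psi(\xi))$, so that $|f_{\psi(\xi),n,p}'(\psi(\xi))|\asymp (1+\varphi'(\psi(\xi)))\,\tau(\psi(\xi))^{-2/p}\,\om(\psi(\xi))^{-1/2}$; taking $q$-th roots and rearranging yields \eqref{eqp1}, i.e. the pointwise form of \eqref{eq-c11} for all $\xi$ with $|\psi(\xi)|>\rho_0$. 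The remaining compact range $|\psi(\xi)|\le\rho_0$ is handled as in the preceding proposition: there all $\psi(\xi)$-dependent factors are bounded above and below, so the quantity is controlled by $|g(\psi(\xi))||\psi'(\xi)|\,\tau(\xi)^{2/q}\om(\xi)^{1/2}/(1+\varphi'(\xi))$, whose finiteness follows by applying \eqref{Eq-gamma} to the image of $f(z)=z$ (a finite $A^q_\om$-integral) and absorbing the $\tau(\xi)^{-2}$ loss against the factor $\tau(\xi)^{2/q}$. Taking the supremum over all $\xi$ gives \eqref{eq-c11}.

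For part (ii), I would repeat the identical computation with the same family $\{f_{\psi(\xi),n,p}\}$, which is bounded in $A^p_\om$ and tends to $0$ uniformly on compact subsets of $\D$ as $|\psi(\xi)|\to 1^-$ (Lemma~3.1 of \cite{PP1}); compactness of $C_{(\psi,g)}$ then forces $\|C_{(\psi,g)}f_{\psi(\xi),n,p}\|_{A^q_\om}\to 0$, and the lower bound above shows the left-hand quantity in \eqref{eq-n2} tends to $0$, proving the claim. The main obstacle is this reverse estimate: one must recognize that the factor multiplying $\om^{q/2}/(1+\varphi')^q$ in the Littlewood--Paley integral is exactly the holomorphic derivative $(C_{(\psi,g)}f)'$, so that \eqref{Eq-gamma} can be used to pass from the global $A^q_\om$-norm down to the pointwise value at $\xi$; everything else is bookkeeping of the factors $\tau$, $\varphi'$ and $\om$ under composition with $\psi$, together with the correct normalization and the derivative estimate of the test functions.
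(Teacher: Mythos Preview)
Your proposal is correct and follows essentially the same approach as the paper: both arguments apply the normalized test functions $F_{\psi(\xi),n,p}/\tau(\psi(\xi))^{2/p}$ of Lemma~\ref{Borichevlemma}, use the sub-mean-value estimate \eqref{Eq-gamma} on the holomorphic derivative $(C_{(\psi,g)}f)'$ to pass from the $A^q_\om$-norm to a pointwise value at $\xi$, and then invoke Lemma~\ref{BL3} to produce the factor $(1+\varphi'(\psi(\xi)))/\omega(\psi(\xi))^{1/2}$; the compact-range case $|\psi(\xi)|\le\rho_0$ and the compactness part (ii) are handled identically. In fact your treatment of the compact range (testing with $f(z)=z$ and cancelling the $\tau(\xi)^{-2/q}$ loss against the explicit factor $\tau(\xi)^{2/q}$) is more detailed than the paper, which simply asserts the bound.
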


\begin{proof}

We start with proving $(i)$. Suppose that the operator $C_{(\psi,g)}$ is bounded and we prove that  $\eqref{eq-c11}$ holds. Taking $\xi\in\D$ such that $|\psi(\xi)|>\rho_0\,$ we consider $ F_{\psi(\xi),n}$ is the test function defined in Lemma \ref{Borichevlemma}. By \eqref{Eq-gamma}, we have 
\begin{equation*}
\begin{split}
\|F_{\psi(\xi),n,p}\|^q_{A^p_{\om}} &\gtrsim\|C_{(\psi,g)}F_{\psi(\xi),n,p}\|^q_{A^q_{\om}}=\int_{\D}\frac{|F'_{\psi(\xi),n,p}(\psi(z))|^q}{(1+\varphi'(z))^q}\,|g(\psi((z))|^q |\psi^\prime (z)|^q\,\omega(z)^{\frac{q}{2}}\, dA(z)\\
&\gtrsim \tau(\xi)^2\frac{|F'_{\psi(\xi),n,p}(\psi(\xi))|^q}{(1+\varphi'(\xi))^q}\,|g(\psi(\xi))|^q |\psi^\prime (\xi)|^q\,\omega(\xi)^{\frac{q}{2}}.
\end{split}
\end{equation*}
Using Lemma \ref{BL3}, we get
\begin{equation*}
\begin{split}
\|F_{\psi(\xi),n,p}\|^q_{A^p_{\om}} 
&\gtrsim \tau(\xi)^2\,|g(\psi(\xi))|^q \,|\psi^\prime (\xi)|^q\,\frac{(1+\varphi'(\psi(\xi)))^q}{(1+\varphi'(\xi))^q}\,\frac{\omega(\xi)^{\frac{q}{2}}}{\omega(\psi(\xi)^{\frac{q}{2}}}.
\end{split}
\end{equation*}
 By Lemma \ref{Borichevlemma}, we obtain 
\begin{equation}\label{eqp11}
\begin{split}
1\gtrsim|g(\psi(\xi))|\,|\psi^\prime (\xi)|^q\, \frac{\tau(\xi)^{2/q}}{\tau(\psi(\xi))^{2/p}}\,\frac{(1+\varphi'(\psi(\xi)))}{(1+\varphi'(\xi))}\,\frac{\omega(\xi)^{\frac{1}{2}}}{\omega(\psi(\xi))^{\frac{1}{2}}}.
\end{split}
\end{equation}
On the other hand, for $|\psi(\xi)|\leq \rho_0$, we have 
$$
\sup_{\psi(\xi)\le\rho_0 }|g(\psi(\xi))|\,|\psi^\prime (\xi)|\, \frac{\tau(\xi)^{2/q}}{\tau(\psi(\xi))^{2/p}}\,\frac{(1+\varphi'(\psi(\xi)))}{(1+\varphi'(\xi))}\,\frac{\omega(\xi)^{\frac{1}{2}}}{\omega(\psi(\xi))^{\frac{1}{2}}}<\infty.
$$
This together with \eqref{eqp11}, completes the proof of $(i)$.

Now, we prove $(ii)$. Suppose that the operator $C_{(\psi,g)}$ is compact and we prove that  $\eqref{eq-n2}$ holds. Taking $\xi\in\D$ such that $|\psi(\xi)|>\rho_0$ and  the  bounded sequence 
$$\Big\{ f_{\psi(\xi),n,p}:=\frac{F_{\psi(\xi),n,p}}{\tau(\psi(\xi))^{2/p}},\quad\text{for}\quad |\psi(\xi)|>\rho_0\Big\} $$
of $A^p_\omega$ that converges uniformly to zero on compact subsets of $\D$ as $|\psi(\xi)|\to 1.$ By\eqref{Eq-gamma} and Lemma \ref{BL3}, we get 

\begin{equation*}
\begin{split}
\|C_{(\psi,g)}\,f_{\psi(\xi),n,p}\|^q_{A^q_{\om}}
&=\int_{\D}\frac{|f'_{\psi(\xi),n,p}(\psi(z))|^q}{(1+\varphi'(z))^q}\,|g(\psi(z))|^q |\psi^\prime (z)|^q\,\omega(z)^{\frac{q}{2}}\, dA(z)\\
&\gtrsim \tau(\xi)^2\,\frac{|f'_{\psi(\xi),n,p}(\psi(\xi))|^q}{(1+\varphi'(\xi))^q}\,|g(\psi(\xi))|^q |\psi^\prime (\xi)|^q\,\omega(\xi)^{\frac{q}{2}}\\
&\gtrsim|g(\psi(\xi))|^q\,|\psi^\prime (\xi)|^q \frac{\tau(\xi)^2}{\tau(\psi(\xi))^{2q/p}}\frac{(1+\varphi'(\psi(\xi)))^q}{(1+\varphi'(\xi))^q}\,\frac{\omega(\xi)^{\frac{q}{2}}}{\omega(\psi(\xi))^{\frac{q}{2}}}.
\end{split}
\end{equation*}
Since the compactness of the operator  $C_{(\psi,g)},$ the proof is complete.
\end{proof}

\begin{proposition}
  Let $ 0<p\le q< \infty.$ Suppose that  $\omega \in \mathcal{W},$  $\psi$ is an analytic self-map of $\D,$ and $g$ is an analytic function on $\D.$ 

\begin{enumerate} 
\item[{\rm (i)}]	 The  operator $C_g^\psi: A^p_\omega\rightarrow A^q_\omega$ is bounded, then  
\begin{equation}\label{eq-nc1}
\frac{\tau(z)^{2/q}}{\tau(\psi(z))^{2/p}}\frac{|g(\psi(z))||\psi^\prime(z)|}{(1+\varphi'(z))}\frac{\omega(z)^{1/2}}{\omega(\psi(z))^{1/2}}\in L^{\infty}(\D,dA).
\end{equation}
\item[{\rm (ii)}]The  operator $C_g^\psi: A^p_\omega\rightarrow A^q_\omega$ is compact, then
\begin{equation}\label{eq-nc2}
\lim_{|\psi(z)|\to1^-} \frac{\tau(z)^{2/q}}{\tau(\psi(z))^{2/p}}\frac{|g(\psi(z))||\psi^\prime(z)|}{(1+\varphi'(z))}\frac{\omega(z)^{1/2}}{\omega(\psi(z))^{1/2}}=0.
\end{equation}
\end{enumerate}  
\end{proposition}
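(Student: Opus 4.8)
The plan is to follow the two preceding propositions almost verbatim, the only structural change being that the norm of $C_g^\psi f$ in \eqref{norm 2nd operator} features $|f(\psi(z))|$ rather than $|f'(\psi(z))|$; consequently I would estimate the test functions $F_{\psi(\xi),n}$ themselves through \eqref{BL1}, instead of their derivatives through Lemma \ref{BL3}. Since the stated range is $0<p\le q<\infty$ but the test-function argument is insensitive to the order of $p$ and $q$, the same computation applies throughout.

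For (i), assume $C_g^\psi$ is bounded and fix $\xi\in\D$ with $|\psi(\xi)|>\rho_0$. Starting from \eqref{norm 2nd operator}, I would discard all of $\D$ except the disc $D_\delta(\xi)$ and apply the subaveraging estimate \eqref{Eq-gamma} (with $p=q$, $\beta=q/2$, $\gamma=q$) to the analytic function $z\mapsto F_{\psi(\xi),n}(\psi(z))\,g(\psi(z))\,\psi'(z)$, whose modulus at the center $\xi$ is $|F_{\psi(\xi),n}(\psi(\xi))|\,|g(\psi(\xi))|\,|\psi'(\xi)|$. This yields
\[
\|C_g^\psi F_{\psi(\xi),n}\|^q_{A^q_\om}\gtrsim \tau(\xi)^2\,|F_{\psi(\xi),n}(\psi(\xi))|^q\,|g(\psi(\xi))|^q\,|\psi'(\xi)|^q\,\frac{\omega(\xi)^{q/2}}{(1+\varphi'(\xi))^q}.
\]
Inserting $|F_{\psi(\xi),n}(\psi(\xi))|^q\asymp\omega(\psi(\xi))^{-q/2}$ from \eqref{BL1} and $\|F_{\psi(\xi),n}\|^q_{A^p_\om}\asymp\tau(\psi(\xi))^{2q/p}$ from Lemma \ref{Borichevlemma}(a), then dividing by $\|F_{\psi(\xi),n}\|^q_{A^p_\om}$ using boundedness, I would obtain \eqref{eq-nc1} on $\{|\psi(\xi)|>\rho_0\}$ after taking $q$-th roots.

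To handle the complementary region $\{|\psi(z)|\le\rho_0\}$, where $\tau(\psi(z))$, $\omega(\psi(z))$ and $1+\varphi'(\psi(z))$ are each comparable to a constant, I would test instead on the constant function $f\equiv 1\in A^p_\om$. Then \eqref{norm 2nd operator} gives finiteness of $\int_\D|g(\psi(z))|^q|\psi'(z)|^q\,\omega(z)^{q/2}(1+\varphi'(z))^{-q}\,dA(z)$, and a further application of \eqref{Eq-gamma} to $z\mapsto g(\psi(z))\psi'(z)$ furnishes the pointwise bound
\[
\tau(z)^2\,|g(\psi(z))|^q\,|\psi'(z)|^q\,\frac{\omega(z)^{q/2}}{(1+\varphi'(z))^q}\lesssim\|C_g^\psi 1\|^q_{A^q_\om}
\]
for all $z\in\D$; restricting to $\{|\psi(z)|\le\rho_0\}$ and absorbing the now-bounded $\psi(z)$-factors yields finiteness of the supremum there. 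Combining the two regions proves (i).

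For (ii), I would repeat the first computation with the normalized family $f_{\psi(\xi),n}=F_{\psi(\xi),n}/\tau(\psi(\xi))^{2/p}$, which is bounded in $A^p_\om$ and tends to zero uniformly on compact subsets of $\D$ as $|\psi(\xi)|\to1$. The same lower bound then shows that $\|C_g^\psi f_{\psi(\xi),n}\|^q_{A^q_\om}$ dominates a constant multiple of the $q$-th power of the left-hand side of \eqref{eq-nc2}, so compactness forces the latter to tend to $0$ as $|\psi(\xi)|\to1^-$. The one point deserving care is that \eqref{Eq-gamma} must be applied to the full composite $z\mapsto f(\psi(z))g(\psi(z))\psi'(z)$, not to $f$ alone: the subaveraging is taken in the variable $z$ over $D_\delta(\xi)$ while the weight factors $\omega$ and $1+\varphi'$ stay evaluated at $z$, which is precisely the structure delivered by \eqref{norm 2nd operator}, so no change of variables into the pullback measure is required.
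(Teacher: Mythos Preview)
Your argument is correct, but it differs from the paper's. The paper does \emph{not} rerun the test-function scheme of the preceding propositions; instead it invokes Theorem \ref{Theorem 1.1}(A) to convert boundedness of $C_g^\psi$ into the condition $M^\psi_{0,p,q}(g)\in L^\infty(\D,dA)$, and then bounds $M^\psi_{0,p,q}(g)(\psi(z))$ from below pointwise by restricting the defining integral to $D_\delta(z)$, applying \eqref{Eq-gamma} to $\xi\mapsto k_{p,\psi(z)}(\psi(\xi))\,g(\psi(\xi))\,\psi'(\xi)$, and finally using the diagonal kernel estimate \eqref{eqn:RK-Diag1} at $\psi(z)$. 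Compactness is handled the same way via the vanishing condition in Theorem \ref{Theorem 1.1}(A).

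What each route buys: the paper's approach is shorter once Theorem \ref{Theorem 1.1} is in hand and, because the normalized kernels $k_{p,\psi(z)}$ exist for all $z$, it delivers \eqref{eq-nc1} on the whole disc in one stroke with no need for the $\{|\psi(z)|\le\rho_0\}$ case or the auxiliary test $f\equiv1$. Your approach, by contrast, is self-contained (it does not presuppose Theorem \ref{Theorem 1.1}) and, as you observe, is insensitive to the relation between $p$ and $q$; the cost is the extra bookkeeping for $|\psi(\xi)|\le\rho_0$, since the Borichev--Dhuez--Kellay test functions $F_{a,n}$ are only available for $|a|>\rho_0$.
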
 

\begin{proof}
 Assume that $C_g^\psi: A^p_\omega\rightarrow A^q_\omega$ is bounded.  This equivalent to, by Theorem \ref{Theorem 1.1},  $M^\psi_{0,p,q}(g)\in L^{\infty}(\D,dA).$ Using \eqref{Eq-gamma} and \eqref{eqn:RK-Diag1}, we obtain
\begin{equation}\label{EsBV}
\begin{split}
M^\psi_{0,p,q}(g)(\psi(z))&=\int_{\D} |k_{p,\psi(z)}(\psi(\xi))|^q\,\frac{|g(\psi(\xi))|^q\,|\psi^\prime(\xi)|^q}{(1+\varphi'(\xi))^{q}}\,\,\omega(\xi)^{q/2}\, dA(\xi)\\
&\geq \int_{D_{\delta}(z)} |k_{p,\psi(z)}(\psi(\xi))|^q\,\frac{|g(\psi(\xi))|^q\,|\psi^\prime(\xi)|^q}{(1+\varphi'(\xi))^{q}}\,\,\omega(\xi)^{q/2}\, dA(\xi)\\
&\gtrsim \tau(z)^{2} \, |k_{p,\psi(z)}(\psi(z))|^q\,\frac{|g(\psi(z))|^q\,|\psi^\prime(z)|^q}{(1+\varphi'(z))^q}\,\,\omega(z)^{q/2}\\
&\gtrsim\frac{ \tau(z)^{2}}{\tau(\psi(z))^{2q/p}} \,\frac{|g(\psi(z))|^q\,|\psi^\prime(z)|^q}{(1+\varphi'(z))^q}\,\frac{\omega(z)^{q/2}}{\omega(\psi(z))^{q/2}},
\end{split}
\end{equation}
 which proves that \eqref{eq-nc1} holds.
 
Next if the operator $C_g^\psi: A^p_\omega\rightarrow A^q_\omega$ compact, then, by Theorem \ref{Theorem 1.1} and \eqref{EsBV}, we obtain
$$
    \lim_{|\psi(z)|\to1^-} \frac{\tau(z)^{2q}}{\tau(\psi(z))^{2q/p}}\frac{|g(\psi(z))|^q\,|\psi^\prime(z)|^q}{(1+\varphi'(z))^q}\frac{\omega(z)^{q/2}}{\omega(\psi(z))^{q/2}}=0,$$
which completes the proof.
\end{proof}

The following result is equivalent to another result similar to those given by Constantin and Pel\'{a}ez in the weighted Fock spaces \cite{Co-Pe}.

\begin{theorem}
Let $0<p, q\leq \infty.$ Suppose that $\omega \in \mathcal{W}$  and $g$ is an analytic function on $\D.$ If $\psi(z)=z$ for all $z\in \D,$ then 
\begin{enumerate}
\item[{\rm (a)}] For $p<q,$ the operator $C_{(\psi,g)}:A_\omega^p \rightarrow A_\omega^q$ is bounded if and only if $g=0.$
\item[{\rm (b)}] For $p>q,$ the operator $C_{(\psi,g)}:A_\omega^p \rightarrow A_\omega^q$ is compact if and only if $g \in L^r(\D, dA),$ where $r=pq/(p-q).$
\end{enumerate}
\end{theorem}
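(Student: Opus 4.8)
The plan is to specialize Theorem~\ref{Theorem 1.1} to $\psi(z)=z$ and read off the two claims from the resulting expressions. When $\psi(z)=z$ we have $\psi'\equiv 1$, so the integral transform collapses to
\[
M^{z}_{1,p,q}(g)(z)=\int_{\D}|k_{p,z}(\xi)|^{q}\,|g(\xi)|^{q}\,\omega(\xi)^{q/2}\,dA(\xi),
\]
and the associated measure becomes $d\nu_{z,\omega,g}=(1+\varphi')^{q}\,\omega^{-q/2}\,d\mu_{z,\omega,g}=|g|^{q}\,dA$. Likewise, using $\triangle\varphi\asymp\tau^{-2}$, the transform in \eqref{bdd 2nd case} reduces to $N^{z,1/p}_{1,p,\infty}(g)(z)\asymp|g(z)|\,\tau(z)^{-2/p}$. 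These two identities are the only $\psi$-dependent simplifications needed.

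For part (a) I would extract a pointwise decay estimate for $g$ and invoke the maximum modulus principle. When $q<\infty$, boundedness of $C_{(\psi,g)}=I_{g}$ forces, via the necessary condition \eqref{eq-c11} specialized to $\psi(z)=z$, that $\sup_{z\in\D}|g(z)|\,\tau(z)^{2/q-2/p}<\infty$; when $q=\infty$, Theorem~\ref{Theorem 1.1}(C) together with the reduction above gives $\sup_{z\in\D}|g(z)|\,\tau(z)^{-2/p}<\infty$. In either case $|g(z)|\lesssim\tau(z)^{\beta}$ with $\beta=2(1/p-1/q)>0$ (reading $1/q=0$ when $q=\infty$), the positivity being precisely where the hypothesis $p<q$ enters. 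Since $\tau$ is radial and decreases to $0$ as $|z|\to 1^{-}$, this yields $\sup_{|z|=r}|g(z)|\to 0$ as $r\to 1^{-}$, whence the maximum modulus principle forces $g\equiv 0$. The converse is trivial, as $g=0$ makes $I_{g}$ the zero operator.

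For part (b) the point is to rewrite the $L^{s}(d\lambda)$ membership of $M^{z}_{1,p,q}(g)$ as an integrability condition on $g$ itself. By Theorem~\ref{Theorem 1.1}(B), compactness, boundedness, and $M^{z}_{1,p,q}(g)\in L^{s}(d\lambda)$ are all equivalent, with $s=p/(p-q)$ (and $s=1$ if $p=\infty$). Using $\nu_{z,\omega,g}=|g|^{q}\,dA$ together with the chain of estimates already established in the proof of Theorem~\ref{Theorem 1.1}(B), this last condition is equivalent, by Theorem~\ref{thm:CMOP} (respectively Theorem~\ref{thm:CMOP2} when $p=\infty$), to the averaging function $\widehat{(\nu_{z,\omega,g})_{r}}(z)=\tau(z)^{-2}\int_{D_{r}(z)}|g|^{q}\,dA$ belonging to $L^{s}(\D,dA)$. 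It then remains to prove
\[
\widehat{(\nu_{z,\omega,g})_{r}}\in L^{s}(\D,dA)\iff g\in L^{qs}(\D,dA),\qquad qs=\frac{pq}{p-q}=r.
\]
The lower bound $\widehat{(\nu_{z,\omega,g})_{r}}(z)\gtrsim|g(z)|^{q}$ is the sub-mean value inequality (Lemma~\ref{lem:subHarmP}), giving the forward implication. For the reverse I would view $\widehat{(\nu_{z,\omega,g})_{r}}(z)$ as an average of $|g|^{q}$ over $D_{r}(z)$, apply Jensen's inequality (legitimate since $s>1$, using $p>q$), and integrate in $z$ via Fubini together with the finite overlap of the discs $\{D_{r}(z)\}$ and $\tau(\xi)\asymp\tau(z)$ on $D_{r}(z)$, yielding $\int_{\D}\widehat{(\nu_{z,\omega,g})_{r}}^{\,s}\,dA\lesssim\int_{\D}|g|^{qs}\,dA$. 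As $|g|^{q}\in L^{s}(dA)$ is the same as $g\in L^{qs}(dA)=L^{r}(dA)$, both directions follow.

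The main obstacle is the $L^{s}$-boundedness of this averaging operator in part (b): one must verify that raising the average to the power $s$ and integrating reproduces $\int|g|^{qs}\,dA$, which relies on $s\ge 1$ (guaranteed by $p>q$), on Jensen's inequality, and on the uniform finite overlap of the lattice discs from Lemma~\ref{lem:Lcoverings}. Part (a), by contrast, is essentially soft once the positive power of $\tau$ is isolated, the only subtlety being that the radial, uniform decay of $\tau$ is what makes the maximum modulus principle applicable.
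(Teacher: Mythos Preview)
Your proposal is correct and broadly parallel to the paper's argument, but the two differ in the tools invoked at each step. For part~(a), the paper does not cite the necessary-condition proposition \eqref{eq-c11}; instead it reads off directly from Theorem~\ref{Theorem 1.1}(A) that $M^{id}_{1,p,q}(g)\in L^{\infty}$, and then combines Lemma~\ref{lem:subHarmP} with the kernel estimate \eqref{eqn:RK-Diag1} to obtain $|g(z)|^{q}\,\tau(z)^{2(1-q/p)}\lesssim M^{id}_{1,p,q}(g)(z)$, which is the same pointwise control you reach. Your appeal to \eqref{eq-c11} and Theorem~\ref{Theorem 1.1}(C) is equally valid and has the advantage of explicitly handling $q=\infty$, which the paper's proof glosses over. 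For part~(b), the forward implication (averaging-function condition $\Rightarrow g\in L^{r}$) is handled identically in both via Lemma~\ref{lem:subHarmP}. For the converse, however, the paper bypasses the averaging function altogether and applies H\"older's inequality directly to the Littlewood--Paley expression for $\|C_{(id,g)}f\|_{A^q_\omega}^{q}$, with exponents $p/q$ and $p/(p-q)$, obtaining boundedness in one line; your Jensen--Fubini route through $\widehat{(\nu)_{r}}$ works too but is longer. One minor imprecision: when $p=\infty$ you have $s=1$, not $s>1$, so Jensen is trivial there; and the ``finite overlap'' you need is really the containment $\{z:\xi\in D_{r}(z)\}\subset D_{r'}(\xi)$ together with $\tau(z)\asymp\tau(\xi)$, not the lattice covering of Lemma~\ref{lem:Lcoverings}.
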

\begin{proof}
 We first prove $(a)$. Let $p<q$, and suppose that $C_{(id,g)}$ is bounded. Using Lemma \ref{lem:subHarmP} and \eqref{eqn:RK-Diag1}, we obtain
\begin{equation*}
\begin{split}
    |g(z)|^q &\asymp \tau(z)^{2q/p}\,|g(z)|^q \,|k_{p,z}(z)|^q\,\omega(z)^{q/2}\\
    &\lesssim \frac{\tau(z)^{2q/p}}{\tau(z)^2}\int_{D_\delta (z)} |g(s)|^q\,|k_{p,z}(s)|^q\,\omega(s)^{q/2}\, dA(s)\lesssim\frac{\tau(z)^{2q/p}}{\tau(z)^2}\,M_{1,p,q}^{id} (g)(z).
\end{split}
    \end{equation*}
    Furthermore, by boundedness of $C_{(id,g)},$ we have
    $$\sup_{z\in \D} |g(z)|^q \,\tau(z)^{2(1-{q/p})}\lesssim \sup_{z\in \D}M_{1,p,q}^{id} (g)(z)<\infty.$$
    Therefore, $g=0$, because $\tau(z)^{2(1-{q/p})}\rightarrow\infty$ as $|z|\rightarrow1.$
    
Now we prove (b). Using $\eqref{eqnorm}$, we have
    \begin{equation}\label{4.57}
        \|C_{(\psi,g)} f\|_{A_\omega^q}^q \asymp\int_\D \frac{|f^\prime(\psi(z)|^q\,|g(\psi(z)|^q\,|\psi^\prime(z)|^q}{(1+\varphi\prime(z))^q}\, \omega(z)^{q/2} \,dA(z) =\|f^\prime\|_{L^q(\mu_{\psi,\omega,g})}^q,
    \end{equation}
   by Lemma \ref{embed1} and Lemma \ref{embed2}, we have $C_{(\psi,g)}:A_\omega^p \rightarrow A_\omega^q$ is bounded if and only if $I_{\mu_{\psi,\omega,g}}:S_\omega^p \rightarrow L^q {(\mu_{\psi,\omega,g})}$ is bounded if and only if $I_{\mu_{\psi,\omega,g}}: S_{\omega^p}\rightarrow L^q(\mu_{\psi,\omega,g})$ is compact if and only if the function
    \begin{equation}\label{condation}
        F_\delta,_{\mu_{\psi,\omega,g}} (\varphi)(z):=\frac{1}{\tau(z)^2}\int_{D_\delta(z)} (1+\varphi^\prime(\xi))^q\,\omega(\xi)^{-q/2}\,d\mu_{\psi,\omega,g}(\xi)
    \end{equation}
    belongs to $L^{p/(p-q)}(\D,dA).$ Considering $\psi=id,$ we get
    $$d\mu_{\psi,\omega,g}(z)=\frac{|g(z)|^q}{(1+\varphi^\prime(z))^q}\,\omega(z)^{q/2}\,dA(z)$$
    and applying condition \eqref{condation}, we get
   \begin{equation*}
       \frac{1}{\tau(z)^2}\int_{D_\delta(z)}|g(\xi)|^q dA(\xi) \in L^{p/(p-q)}(\D,dA).
   \end{equation*}
   By Lemma \ref{lem:subHarmP}, we have that $g\in L^r(\D,dA),$ where $r=pq/(p-q).$

Assume next that $g\in L^r(\D,dA)$. By H\"older`s inequality and \eqref{eqnorm}, we get
   \begin{equation}
       \begin{split}
    \|C_{(id,g)}f\|_{A_\omega^q}^q &\asymp\int_\D \frac{|f^\prime(z)|^q\,|g(z)|^q}{(1+\varphi^\prime(z))^q} \,\omega(z)^{q/2}\,dA(z)\\
           &\lesssim\left(\int_\D \frac{|f^\prime(z)|^p\,\omega(z)^{p/2}}{(1+\varphi^\prime(z))^p}\,dA(z)\right)^{q/p}\left(\int_\D|g(z)|^r\,dA(z)\right)^{q/r}\asymp\|f\|_{A_\omega^p}^q\,\|g\|^q_{L^r(\D,dA)}\lesssim\|f\|_{A_\omega^p}^q,
       \end{split}
   \end{equation}
   which implies boundedness and completes the proof.
   \end{proof}

\begin{theorem} Let  $ 0<p\le q\le \infty.$ Suppose also that  $\omega \in \mathcal{W}$  and $g$ is an analytic function on $\D.$ 
\begin{enumerate} 
\item [\rm{(A)}] $M^{id}_{0,p,q}(g')\in L^{\infty}(\D,dA)$ if and only if 
\begin{equation}\label{eq-V7}
\frac{|g'(z)|}{(1+\varphi'(z))}\Delta\varphi(z)^{\frac{1}{p}-\frac{1}{q}}\in L^{\infty}(\D,dA).
\end{equation}

\item [\rm{(B)}] $\lim_{|z|\to1^-}M^{id}_{0,p,q}(g')=0$ if and only if 
\begin{equation}\label{eq-V8}
\lim_{|z|\to1^-}\frac{|g'(z)|}{(1+\varphi'(z))}\Delta\varphi(z)^{\frac{1}{p}-\frac{1}{q}}=0.
\end{equation}
\end{enumerate}
\end{theorem}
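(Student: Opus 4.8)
The plan is to make the transform explicit and then sandwich it. Taking $\psi=\mathrm{id}$, $n=0$, and replacing $g$ by $g'$,
\[
M^{id}_{0,p,q}(g')(z)=\int_{\D}|k_{p,z}(\xi)|^q\,\frac{|g'(\xi)|^q}{(1+\varphi'(\xi))^q}\,\omega(\xi)^{q/2}\,dA(\xi),\qquad z\in\D.
\]
I will prove a \emph{pointwise} lower bound of $M^{id}_{0,p,q}(g')(z)$ by the $q$-th power of the quantity in \eqref{eq-V7}, together with a \emph{global} upper bound of $\sup_z M^{id}_{0,p,q}(g')(z)$ by the supremum of that same $q$-th power; these two one-sided estimates give (A), and their limiting forms give (B). Throughout I would use $\Delta\varphi(z)\asymp\tau(z)^{-2}$ (from $\omega\in\mathcal{W}$), so that the quantity in \eqref{eq-V7} raised to the $q$-th power is comparable to $\frac{|g'(z)|^q}{(1+\varphi'(z))^q}\,\tau(z)^{-2(q/p-1)}$. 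For $q=\infty$ the exponent $1/p-1/q$ becomes $1/p$ and $M^{id}_{0,p,\infty}(g')$ is, by convention, the sup-type transform $N^{id,1/p}_{0,p,\infty}(g')$ of \eqref{bdd 2nd case}, which is literally the left-hand side of \eqref{eq-V7}; hence only $q<\infty$ requires an argument.

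For the implication \eqref{eq-V7}$\Rightarrow M^{id}_{0,p,q}(g')\in L^\infty$, I would assume the bound in \eqref{eq-V7}, which after the conversion above reads $\frac{|g'(\xi)|^q}{(1+\varphi'(\xi))^q}\lesssim \tau(\xi)^{2(q/p-1)}$. Substituting this and writing $|k_{p,z}|^q=|K_z|^q/\|K_z\|_{A^p_\omega}^q$, the integral is controlled by
\[
\frac{1}{\|K_z\|_{A^p_\omega}^q}\int_{\D}|K_z(\xi)|^q\,\omega(\xi)^{q/2}\,\tau(\xi)^{2(q/p-1)}\,dA(\xi).
\]
The integral is exactly of the form handled by Lemma \ref{nEstim} (with $p$ replaced by $q$ and $\beta=2(q/p-1)$), which bounds it by $\omega(z)^{-q/2}\tau(z)^{2q(1/p-1)}$. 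Dividing by $\|K_z\|_{A^p_\omega}^q\asymp\omega(z)^{-q/2}\tau(z)^{2q(1-p)/p}$ from \eqref{Eq-NE}, the powers of $\omega(z)$ cancel and the powers of $\tau(z)$ sum to zero, leaving a bound uniform in $z$. This is the clean half of the proof.

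For the converse I would discard all of $\D$ except the disc $D_\delta(z)$ and use the near-diagonal estimate \eqref{eqn:RK-Diag1}, which yields $|k_{p,z}(\xi)|^q\omega(\xi)^{q/2}\asymp\tau(z)^{-2q/p}$ on $D_\delta(z)$; combined with the near-constancy $\varphi'(\xi)\asymp\varphi'(z)$ on $D_\delta(z)$ (the fact underlying \eqref{Eq-gamma}) this reduces the lower bound to $\tau(z)^{-2q/p}(1+\varphi'(z))^{-q}\int_{D_\delta(z)}|g'(\xi)|^q\,dA(\xi)$. Since $|g'|^q$ is subharmonic and $D_\delta(z)$ has area $\asymp\tau(z)^2$, the sub-mean-value inequality gives $\int_{D_\delta(z)}|g'(\xi)|^q\,dA(\xi)\gtrsim\tau(z)^2|g'(z)|^q$, whence $M^{id}_{0,p,q}(g')(z)\gtrsim\tau(z)^{2(1-q/p)}\frac{|g'(z)|^q}{(1+\varphi'(z))^q}$, which is exactly a constant multiple of the $q$-th power of \eqref{eq-V7} after reinserting $\Delta\varphi\asymp\tau^{-2}$. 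Thus boundedness of $M^{id}_{0,p,q}(g')$ forces \eqref{eq-V7}.

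Part (B) would be obtained by rerunning the same two estimates while tracking the limit $|z|\to1^-$. The pointwise lower bound from the converse step shows at once that $M^{id}_{0,p,q}(g')(z)\to0$ implies \eqref{eq-V8}. For the opposite implication the main obstacle is that the global upper bound from Lemma \ref{nEstim} must be localised: given $\varepsilon>0$ I would pick $r_0<1$ with $\frac{|g'(\xi)|^q}{(1+\varphi'(\xi))^q}\le\varepsilon\,\tau(\xi)^{2(q/p-1)}$ for $|\xi|>r_0$, split the integral over $\{|\xi|\le r_0\}$ and $\{|\xi|>r_0\}$, dominate the tail by $\varepsilon$ times the uniform bound of the first half, and show the compact part tends to $0$ using that $|k_{p,z}(\xi)|\,\omega(\xi)^{1/2}\to0$ uniformly for $\xi$ in the fixed compact set $\{|\xi|\le r_0\}$ as $|z|\to1^-$ (a consequence of the growth of $\|K_z\|_{A^p_\omega}$ in \eqref{Eq-NE}). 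This localisation step, rather than any single inequality, is where the care is needed.
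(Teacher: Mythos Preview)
Your argument for (A) is correct and follows the same two-sided estimate scheme as the paper. For the upper bound you invoke Lemma~\ref{nEstim} directly on $\int_\D|K_z|^q\omega^{q/2}\tau^{2(q/p-1)}\,dA$, whereas the paper first rewrites $|k_{p,z}|^q\asymp\tau(z)^{2(1-q/p)}|k_{q,z}|^q$ via Lemma~\ref{lem:RK-PE1}(a) and then uses $\|k_{q,z}\|_{A^q_\omega}=1$; the two routes are equivalent. Your lower bound via the near-diagonal estimate \eqref{eqn:RK-Diag1} and sub-mean-value on $D_\delta(z)$ is exactly the paper's computation \eqref{EsBV}--\eqref{EsBVn}.

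For (B) the paper does not carry out a direct $\varepsilon$-splitting; its proof is the one line ``follows from Theorem~\ref{Theorem 1.1}, \eqref{EsBVn}, and \eqref{EsBV2}'', i.e.\ it passes through the vanishing Carleson measure machinery (Theorem~\ref{thm:VCMPQ}). Your explicit localisation is a legitimate alternative, and the pointwise lower bound plus the tail estimate are fine. The one soft spot is the compact part: the assertion that $|k_{p,z}(\xi)|\,\omega(\xi)^{1/2}\to0$ uniformly for $|\xi|\le r_0$ does \emph{not} follow from the growth of $\|K_z\|_{A^p_\omega}$ alone when $p$ is small. Indeed, combining Lemma~\ref{lem:subHarmP} with \eqref{Eq-NE} only yields $|k_{p,z}(\xi)|\,\omega(\xi)^{1/2}\lesssim C_{r_0}\,\tau(z)^{2-2/p-2/t}$ for any $t>0$, and no choice of $t$ makes this vanish once $p\le1$. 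The claim is still true for $\omega\in\mathcal{W}$, but it requires the off-diagonal kernel decay (see e.g.\ \cite{ASSER,LR2}), not merely the norm asymptotics. A cleaner fix, closer to what the paper implicitly does, is to replace the Berezin-type condition by the averaging-function condition via Theorem~\ref{thm:VCMPQ}(b)$\Leftrightarrow$(c): then the ``compact part'' disappears automatically, since $D_\delta(z)\subset\{|\xi|>r_0\}$ once $|z|$ is close enough to $1$, and the argument reduces to $\sup_{\xi\in D_\delta(z)}I(g,\varphi)(\xi)\to0$.
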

\begin{proof}
   We start with proving $(A).$ Assume that $M^{id}_{0,p,q}(g')\in L^{\infty}(\D,dA).$ It follows from \eqref{EsBV}, and changing $g$ by $g'$ and $\psi=id$, we get
\begin{equation}\label{EsBVn}
\begin{split}
M^{id}_{0,p,q}(g')(z)&=\int_{\D} |k_{p,z}(\xi)|^q\,\frac{|g'(\xi)|^q}{(1+\varphi'(\xi))^{q}}\,\,\omega(\xi)^{q/2}\, dA(\xi)\\
&\gtrsim\frac{ \tau(z)^{2}}{\tau(z)^{2q/p}} \,\frac{|g'(z)|^q}{(1+\varphi'(z))^q}\asymp\left(\frac{|g'(z)|}{(1+\varphi'(z))}\, \Delta\varphi(z)^{\frac{1}{p}-\frac{1}{q}}\right)^{q}.
\end{split}
\end{equation}
Thus, 
$$
    \frac{|g'(z)|}{(1+\varphi'(z))}\Delta\varphi(z)^{\frac{1}{p}-\frac{1}{q}}\in L^{\infty}(\D,dA).$$

For the reverse implication, assume that $$I(g,\varphi)(z):=\frac{|g'(z)|}{(1+\varphi'(z))}\, \Delta\varphi(z)^{\frac{1}{p}-\frac{1}{q}}\in L^{\infty}(\D,dA).$$ Using \eqref{eqn:Eq-NE1}, we get 
\begin{equation*}\label{EsBV1}
\begin{split}
M^{id}_{0,p,q}(g')(z)&=\int_{\D} |k_{p,z}(\xi)|^q\,\frac{|g'(\xi)|^q}{(1+\varphi'(\xi))^{q}}\,\,\omega(\xi)^{q/2}\, dA(\xi)\\
&\lesssim \left( \tau(z)^{2(1-q/p)} \int_{\D} |k_{q,z}(\xi)|^q\,\Delta\varphi(z)^{1-\frac{q}{p}}\,\omega(\xi)^{q/2}\, dA(\xi)\right)\sup_{z\in \D}\,(I(g,\varphi)(z))^q.\\
\end{split}
\end{equation*}
Since $\Delta\varphi(z)\asymp \tau(z)^{-2},$
\begin{equation}\label{EsBV2}
\begin{split}
M^{id}_{0,p,q}(g')(z)
&\lesssim \left( \int_{\D} |k_{q,z}(\xi)|^q\,\omega(\xi)^{q/2}\, dA(\xi)\right)\sup_{z\in \D}(I(g,\varphi)(z))^q\\
&=\|k_{q,z}\|^q_{A^q_{\om}}\sup_{z\in \D}(I(g,\varphi)(z))^q=\sup_{z\in \D}(I(g,\varphi)(z))^q.
\end{split}
\end{equation}
This completes the proof of $(A).$ 

The proof of (B) follows from Theorem \ref{Theorem 1.1}, \eqref{EsBVn}, and \eqref{EsBV2}.
\end{proof}

Before stating the next theorem, following Siskakis \cite{sis}, for a given weight $\omega$, we define the distortion function of $\omega$ by 
$$
\psi_\omega(r):=\frac{1}{\omega(r)}\int_{r}^{1}\omega(u)\, du, \, \, 0\le r< 1.
$$
According to (c) of Lemma 32 in \cite{Co-Pe},
 \begin{equation}\label{eq-V10}
\psi_\omega(r) \asymp (1+\varphi'(r))^{-1}, \,\,\textrm{for} \,\, r \in [0,1).
\end{equation}

\begin{theorem}\label{CAC1} Let  $ 0<q< p< \infty.$ Suppose also that  $\om \in \mathcal{W}$  and $g$ is an analytic function on $\D.$  The following statements are equivalent:
\begin{enumerate} 
\item[{\rm (a)}] The general transform function $$M^{id}_{0,p,q}(g') \in L^{\frac{p}{p-q}}(\D,d\lambda).$$

\item[{\rm (b)}] The function \begin{equation}\label{eq-V9}
\frac{|g'(z)|}{(1+\varphi'(z))}\in L^{\frac{pq}{p-q}}(\D,dA).
\end{equation}
\end{enumerate}
\end{theorem}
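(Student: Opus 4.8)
The plan is to reduce condition (a) to a Carleson-measure condition on the measure $d\mu(\xi)=|g'(\xi)|^q(1+\varphi'(\xi))^{-q}\,dA(\xi)$ and then translate that condition into the pointwise statement (b) by subharmonicity. First I would record from \eqref{EsBVn} that $M^{id}_{0,p,q}(g')(z)=\int_\D|k_{p,z}(\xi)|^q\,\omega(\xi)^{q/2}\,d\mu(\xi)$, and apply Lemma \ref{lem:RK-PE1}(a) to pass to the $q$-Berezin transform, obtaining $M^{id}_{0,p,q}(g')(z)\asymp\tau(z)^{2(1-q/p)}G_q(\mu)(z)$. Raising to the power $s=p/(p-q)$ and integrating against $d\lambda=\tau^{-2}\,dA$, the $\tau$-powers cancel because $2(1-q/p)\,s-2=0$; hence
\[
\int_\D M^{id}_{0,p,q}(g')(z)^{s}\,d\lambda(z)\asymp\int_\D G_q(\mu)(z)^{s}\,dA(z).
\]
Thus (a) is equivalent to $G_q(\mu)\in L^{s}(\D,dA)$, which, exactly as in the proof of Theorem \ref{Theorem 1.1}(B), is equivalent by Theorem \ref{thm:CMOP} to $\widehat{\mu_\delta}\in L^{s}(\D,dA)$ (equivalently, to $\mu$ being a $q$-Carleson measure for $A^p_\omega$). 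This part is routine given the kernel estimates and the Carleson machinery already assembled.

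The remaining and genuinely substantive step is to show $\widehat{\mu_\delta}\in L^{s}(dA)\iff$ (b). Using \eqref{eqn:asymptau} together with the local comparability $1+\varphi'(\xi)\asymp 1+\varphi'(z)$ on $D_\delta(z)$ (statement $(d)$ of \cite[Lemma 32]{Co-Pe}), I would first write $\widehat{\mu_\delta}(z)\asymp(1+\varphi'(z))^{-q}\,\tau(z)^{-2}\int_{D_\delta(z)}|g'(\xi)|^q\,dA(\xi)$. The sub-mean-value estimate of Lemma \ref{lem:subHarmP} (with $\beta=0$), applied to the subharmonic function $|g'|^q$, then gives $\big(|g'(z)|/(1+\varphi'(z))\big)^q\lesssim\widehat{\mu_\delta}(z)$; raising to the power $s$ and integrating yields the implication (a)$\Rightarrow$(b), since $qs=pq/(p-q)$.

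For the converse I would exploit that $s=p/(p-q)>1$. Writing $h(\xi)=|g'(\xi)|^q(1+\varphi'(\xi))^{-q}$, the quantity $\widehat{\mu_\delta}(z)\asymp\tau(z)^{-2}\int_{D_\delta(z)}h\,dA$ is the image of $h$ under the averaging operator, and the hard part will be to check that this operator is bounded on $L^{s}(dA)$. I expect to obtain this from a Schur test: its positive kernel $k(z,\xi)=\tau(z)^{-2}\mathbf{1}_{D_\delta(z)}(\xi)$ satisfies $\int_\D k(z,\xi)\,dA(\xi)\asymp 1$ and $\int_\D k(z,\xi)\,dA(z)\asymp 1$, the latter because $\{z:\xi\in D_\delta(z)\}$ has area $\asymp\tau(\xi)^2$ with $\tau(z)\asymp\tau(\xi)$ there, both consequences of \eqref{eqn:asymptau} and the finite-overlap lattice of Lemma \ref{lem:Lcoverings}. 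Boundedness on $L^{s}$ then gives
\[
\int_\D\widehat{\mu_\delta}(z)^{s}\,dA(z)\lesssim\int_\D h(z)^{s}\,dA(z)=\int_\D\left(\frac{|g'(z)|}{1+\varphi'(z)}\right)^{pq/(p-q)}\,dA(z),
\]
which finishes (b)$\Rightarrow$(a). The only delicate points are the cancellation of the $\tau$-weights in the first display and the verification of the two Schur conditions in the last step; everything else follows from the local regularity of $\tau$ and $\varphi'$ on the discs $D_\delta(z)$.
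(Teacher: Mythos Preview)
Your argument is correct and your reduction of (a) to $G_q(\mu)\in L^{p/(p-q)}(dA)$ via Lemma \ref{lem:RK-PE1}(a) and the $\tau$-cancellation is exactly right. For the implication (a)$\Rightarrow$(b), your route and the paper's coincide: the paper restricts the defining integral for $M^{id}_{0,p,q}(g')$ to $D_\delta(z)$, uses \eqref{Eq-gamma} and \eqref{eqn:RK-Diag1} to obtain the pointwise bound $\big(|g'(z)|/(1+\varphi'(z))\big)^q\lesssim\tau(z)^{2(q/p-1)}M^{id}_{0,p,q}(g')(z)$, and then integrates---which is precisely your sub-mean-value step written without the intermediate $\widehat{\mu_\delta}$ notation.

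The genuinely different part is (b)$\Rightarrow$(a). The paper does not pass through the averaging function or a Schur test at all; instead it estimates $M^{id}_{0,p,q}(g')(z)^{p/(p-q)}$ directly by H\"older's inequality (splitting $|K_z|^q\omega^{q/2}=|K_z|^{q/2}\omega^{q/4}\cdot|K_z|^{q/2}\omega^{q/4}$ with exponents $p/(p-q)$ and $p/q$), then applies Fubini and the off-diagonal kernel bound of Lemma \ref{nEstim} to close the integral. Your approach trades the kernel estimate of Lemma \ref{nEstim} for the Carleson machinery of Theorem \ref{thm:CMOP} plus an elementary Schur test; this is cleaner conceptually and makes the role of the averaging operator transparent, while the paper's route is more self-contained and computational. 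Two minor remarks: your appeal to Theorem \ref{thm:CMOP}(c) with $t=q$ formally requires $t>1$, though the paper uses the same identification in the proof of Theorem \ref{Theorem 1.1}(B) and the equivalence $\widehat{\mu_\delta}\in L^s\Leftrightarrow G_q(\mu)\in L^s$ holds for every $t>0$ by the same local-kernel arguments; and the second Schur condition follows already from \eqref{eqn:asymptau} without invoking Lemma \ref{lem:Lcoverings}.
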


\begin{proof}
Let $0<q<p<\infty.$ Our first step is to verify that $(a)$ implies $(b).$ Suppose that $M_{0,p,q}^{id} (g^\prime) \in L^{\frac{p}{p-q}}(\D, d\lambda).$ Then, by \eqref{Eq-gamma}, we get 
$$M^{id}_{0,p,q}(g')(z)=\int_{\D} |k_{p,z}(\xi)|^q\,\frac{|g'(\xi)|^q}{(1+\varphi'(\xi))^{q}}\,\omega(\xi)^{q/2}\, dA(\xi)
\gtrsim \tau(z)^2 \,|k_{p,z}(z)|^q \,\frac{|g^\prime(z)|^q}{(1+\varphi^\prime(z))^q}\,\omega(z)^{q/2}.$$
Using Lemma \ref{lem:RK-PE1}, we have
\begin{equation*}
\begin{split}
M^{id}_{0,p,q}(g')(z)\gtrsim\frac{ \tau(z)^{2}}{\tau(z)^{2q/p}} \,\frac{|g'(z)|^q}{(1+\varphi'(z))^{q}}\asymp\left(\frac{|g'(z)|}{(1+\varphi'(z))}\, \Delta\varphi(z)^{\frac{1}{p}-\frac{1}{q}}\right)^{q}.
\end{split}
\end{equation*}
In this case, we conclude that 
\begin{equation*}\label{EsBVn1}
\begin{split}
\tau(z)^{2(\frac{q}{p}-1)}M^{id}_{0,p,q}(g')(z)\gtrsim\left(\frac{|g'(z)|}{(1+\varphi'(z))}\right)^{q}.
\end{split}
\end{equation*}
By our assumption and the fact that $\tau(z)^{2q/p}$ is bounded, (b) is true.

Conversely, put  $r=\frac{pq}{(p-q)},$ by  H\"older's inequality and , we obtain
\begin{equation*}\label{EsBV3}
\begin{split}
&M^{id}_{0,p,q}(g')(z)^{p/(p-q)}=\left(\int_{\D} |k_{p,z}(\xi)|^q\,\frac{|g'(\xi)|^q}{(1+\varphi'(\xi))^{q}}\,\,\omega(\xi)^{q/2}\, dA(\xi)\right)^{p/(p-q)}\\
&\le\|K_z\|^{-r}_{A^p_{\om}} \left(\int_{\D} |K_{z}(\xi)|^{\frac{r}{2}} \left(\frac{|g'(\xi)|}{1+\varphi'(\xi)}\right)^{r} \omega(\xi)^{\frac{r}{4}} \, dA(\xi)\right)
\cdot\left(\int_{\D} |K_{z}(\xi)|^{\frac{p}{2}}  \omega(\xi)^{\frac{p}{4}}\, dA(\xi)\right)^{\frac{q}{(p-q)}}\\
&=\frac{\|K_z\|^{r/2}_{A^{p/2}_{\om}}}{\|K_z\|^{r}_{A^p_{\om}}} \int_{\D} |K_{z}(\xi)|^{\frac{r}{2}} \left(\frac{|g'(\xi)|}{1+\varphi'(\xi)}\right)^{r} \omega(\xi)^{\frac{r}{4}} \, dA(\xi).
\end{split}
\end{equation*}
Using Theorem \ref{RK-PE} and Fubini's theorem, we have 
\begin{equation*}\label{EsBV4}
\begin{split}
&\int_{\D} M^{id}_{0,p,q}(g')(z)^{p/(p-q)}\frac{dA(z)}{\tau(z)^{2}}\\
&\lesssim\int_{\D}\left(\frac{|g'(\xi)|}{1+\varphi'(\xi)}\right)^{r} \omega(\xi)^{\frac{r}{4}}  \left( \int_{\D} |K_{\xi}(z)|^{\frac{r}{2}} \, \omega(z)^{\frac{r}{4}}\,\tau(z)^{r-2} \,dA(z) \right)dA(\xi).
\end{split}
\end{equation*}
Since $$\omega(\xi)^{\frac{r}{4}}  \left( \int_{\D} |K_{\xi}(z)|^{\frac{r}{2}} \, \omega(z)^{\frac{r}{4}}\,\tau(z)^{r-2} \,dA(z) \right)\lesssim 1$$ (see Lemma \ref{nEstim}), the proof is complete.
\end{proof}

In the following theorem, we prove that our necessary and sufficient conditions of boundedness and compactness of classical Volterra operators are equivalent to those results given by Pau and Pel\`{a}ez in \cite{PP1}.

\begin{theorem}\label{Jor-PelC} Let  $ 0<p,q< \infty.$ Suppose also that  $\omega \in \mathcal{W}$  and $g$ is an analytic function on $\D.$  
\begin{enumerate} 

\item[(I)] For  $p=q,$  we have the following statements
\begin{enumerate} 
 \item[(a)]  $M^{id}_{0,p,q}(g^\prime) \in L^{\infty}(\D,dA)$ \quad if and only if \,$\psi_\omega(z)\,|g^\prime(z)|\in L^{\infty}(\D,dA).$
\item[(b)]  $\lim_{|z|\to1} M^{id}_{0,p,q}(g^\prime)=0$ if and only if 
$\,\lim_{|z|\to1}\psi_\omega(z)\,|g^\prime(z)|=0.$
\end{enumerate}
\item[(II)] For  $p<q,$ with 
\begin{equation}\label{jcn}
\Delta\varphi(z)\asymp ((1-|z|)^{t}\,\psi_\omega(z))^{-1}, \,\, z\in \D, \,\, \textrm{for some}\,\,\, t\geq 1
\end{equation}
the following statements are equivalent:
\begin{enumerate} 
\item[(c)]  $M^{id}_{0,p,q}(g^\prime) \in L^{\infty}(\D,dA).$
\item[(d)]  The function $g$ is constant.
\end{enumerate}
\end{enumerate}
\end{theorem}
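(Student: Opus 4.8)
The whole argument is meant to be read off the two–sided estimate for $M^{id}_{0,p,q}(g')$ proved just above, namely the lower bound \eqref{EsBVn} and the upper bound \eqref{EsBV2}, which together say
\[
M^{id}_{0,p,q}(g')(z)\asymp\Big(\frac{|g'(z)|}{1+\varphi'(z)}\,\Delta\varphi(z)^{\frac1p-\frac1q}\Big)^{q},
\]
together with the conversions $1+\varphi'(z)\asymp\psi_\omega(z)^{-1}$ from \eqref{eq-V10} and, for part (II), $1+\varphi'(z)\asymp(1-|z|)^{t}\,\Delta\varphi(z)$ coming from the hypothesis \eqref{jcn}. For part (I) one has $p=q$, so the exponent $\frac1p-\frac1q$ is $0$ and the displayed estimate collapses to $M^{id}_{0,p,q}(g')\asymp\big(|g'|/(1+\varphi')\big)^{q}$. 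Hence $M^{id}_{0,p,q}(g')\in L^\infty$ if and only if $|g'|/(1+\varphi')\in L^\infty$, and by \eqref{eq-V10} this is the same as $\psi_\omega|g'|\in L^\infty$, which is (a). Running the same equivalence with ``$\in L^\infty$'' replaced by ``$\to0$ as $|z|\to1^-$'' gives (b). No new estimate is required.

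For part (II) the implication (d)$\Rightarrow$(c) is trivial: a constant $g$ has $g'\equiv0$, so $M^{id}_{0,p,q}(g')\equiv0$. For (c)$\Rightarrow$(d) I would mimic the earlier proof that $C_{(\psi,g)}\colon A^p_\omega\to A^q_\omega$ with $\psi=\mathrm{id}$ and $p<q$ forces $g=0$, but apply it to $g'$ in place of $g$. Starting from $M^{id}_{0,p,q}(g')\in L^\infty$, the lower bound \eqref{EsBVn} gives the pointwise inequality $\frac{|g'(z)|}{1+\varphi'(z)}\Delta\varphi(z)^{\frac1p-\frac1q}\lesssim1$, that is
\[
|g'(z)|\lesssim(1+\varphi'(z))\,\Delta\varphi(z)^{-(\frac1p-\frac1q)}.
\]
Inserting $1+\varphi'\asymp(1-|z|)^{t}\Delta\varphi$ from \eqref{jcn} turns the right–hand side into $(1-|z|)^{t}\,\Delta\varphi(z)^{\,1-(\frac1p-\frac1q)}=:h(|z|)$, a radial majorant since $\varphi$ is radial. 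The target is to show $h(r)\to0$ as $r\to1^-$.

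Once $h(r)\to0$ is known, we obtain $|g'(z)|\to0$ as $|z|\to1^-$; since $g'\in H(\D)$, the maximum modulus principle then forces $g'\equiv0$, so $g$ is constant, proving (d). The main obstacle is exactly this decay $h(r)\to0$. The two factors of $h$ pull against each other: as $\Delta\varphi\asymp\tau^{-2}\to\infty$ the factor $\Delta\varphi^{\,1-(1/p-1/q)}$ grows (when $1/p-1/q<1$), whereas $(1-r)^{t}\to0$. Determining that the power of $(1-r)$ wins is where one must use the quantitative form of \eqref{jcn}, the normalisation $t\ge1$, and $\Delta\varphi\asymp\tau^{-2}$ simultaneously; this balancing is the only genuinely delicate point, and it is precisely where the weight hypothesis \eqref{jcn} is indispensable. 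With $h(r)\to0$ in hand, the remainder is the routine maximum–modulus collapse just described.
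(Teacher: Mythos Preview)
Your handling of part~(I) is essentially the same as the paper's, though the displayed pointwise $\asymp$ is overstated: \eqref{EsBVn} is a pointwise lower bound, but \eqref{EsBV2} bounds $M^{id}_{0,p,q}(g')(z)$ only by $\sup_\xi I(g,\varphi)(\xi)^q$, not by $I(g,\varphi)(z)^q$. This suffices for~(a) at the level of suprema; for~(b) one should simply quote part~(B) of the preceding theorem (whose proof also invokes Theorem~\ref{Theorem 1.1}), rather than pretend a pointwise equivalence.

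The genuine problem is your argument for (c)$\Rightarrow$(d). The decay $h(r)\to0$ you flag as ``the only genuinely delicate point'' is in fact \emph{false} in general, so no balancing of \eqref{jcn}, $t\ge1$ and $\Delta\varphi\asymp\tau^{-2}$ can rescue it. For the prototype weight $\omega_\alpha(z)=\exp\!\big(-b(1-|z|^2)^{-\alpha}\big)$ one has $1+\varphi'(r)\asymp(1-r)^{-\alpha-1}$, $\Delta\varphi(r)\asymp(1-r)^{-\alpha-2}$, and \eqref{jcn} holds with $t=1$; your majorant becomes
\[
h(r)\;=\;(1-r)\,\Delta\varphi(r)^{\,1-s}\;\asymp\;(1-r)^{\,1-(\alpha+2)(1-s)},\qquad s=\tfrac1p-\tfrac1q,
\]
which tends to $0$ only when $s>(\alpha+1)/(\alpha+2)$. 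For any smaller $s$ (for instance $p=2$, $q=4$, $\alpha=1$, giving $s=\tfrac14<\tfrac23$) one has $h(r)\to\infty$, and the bound $|g'(z)|\lesssim h(|z|)$ extracted from \eqref{EsBVn} says nothing.

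The paper avoids this trap by passing to the \cite{PP1} normalisation, where the lower bound on $M^{id}_{0,p,q}(g')$ acquires the reproducing--kernel factor $\|K_z\|_{A^2(\omega)}^{2s}$ and hence the exponentially growing weight $\omega(z)^{-s}$: one obtains $\|K_z\|_{A^2(\omega)}^{2s}\,\psi_\omega(z)\,|g'(z)|\lesssim1$ and then
\[
\|K_z\|_{A^2(\omega)}^{2s}\,\psi_\omega(z)\;\asymp\;\frac{\tau(z)^{2(1-s)}}{(1-|z|)^{t}\,\omega(z)^{s}}\;\longrightarrow\;\infty
\]
by Lemma~2.3 of \cite{PP1}. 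It is precisely the factor $\omega(z)^{-s}$, absent from your route because \eqref{EsBVn} cancels the weight entirely, that overwhelms the competing polynomial terms and forces $g'\equiv0$.
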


\begin{proof}
First, we begin with the proof of $(a)$ of $(I)$ when $p=q.$ Assume that $\psi_\om(z)\,|g'(z)|\in L^{\infty}(\D,dA)$  and we prove that $M^{id}_{0,p,p}(g') \in L^{\infty}(\D,dA)$. By using \eqref{eq-V10}, we have 
\begin{equation}\label{CJ}
\begin{split}
M^{id}_{0,p,p}(g')(z)&=\int_{\D} |k_{p,z}(\xi)|^p\,\frac{|g'(\xi)|^p}{(1+\varphi'(\xi))^{p}}\,\,\omega(\xi)^{p/2}\, dA(\xi)\\
&\asymp\sup_{\xi\in \D}\left(\psi_\om(\xi)\,|g'(\xi)|\right)^p \,\left(  \int_{\D} |k_{p,z}(\xi)|^p\,\omega(\xi)^{p/2}\, dA(\xi)\right)\\
&=\sup_{\xi\in \D}\left(\psi_\om(\xi)\,|g'(\xi)|\right)^p\, \|k_{p,z}\|^p_{A^p_\om}=\sup_{\xi\in \D}\left(\psi_\om(\xi)\,|g'(\xi)|\right)^p.
\end{split}
\end{equation}
Thus, $M^{id}_{0,p,q}(g') \in L^{\infty}(\D,dA)$ if and only if $\psi_\om(z)\,|g'(z)|\in L^{\infty}(\D,dA).$
The proof of (b) follows easily from~\eqref{CJ}.

Next, we prove (II). It is easy to see that (d) implies (c). Note that the  weighted Bergman space $A^p(\omega),$ defined in \cite{PP1}, is  the same as  the Bergman spaces $A^p_W, $ with $W=\omega^{2/p}$ and for $0<p<\infty.$ 
Moreover, 
$$
M^{id}_{0,p,q}(g')(z) = \int_{D_{\delta}(z)}|k_{p,z}(\xi)|^q\,\frac{|g'(\xi)|^q}{(1+\varphi'(\xi))^{q}}\,\omega(\xi) \,dA(\xi),
$$
and  \eqref{eqn:RK-Diag1} is transformed to 
\begin{equation}\label{eqn:RK-Diag1n}
|k_ {p,z}(\zeta)|^q \, \omega(\zeta)^{q/p} \asymp \tau(z)^{-2q/p} ,\qquad \zeta \in D_\delta(z),
\end{equation}
where $k_{p,z}(\xi)=K_z(\xi)/\|K_{p,z}\|_{A^p(\om)}.$

 For $p<q$ and write $s=\frac{1}{p}-\frac{1}{q},$ using   \eqref{eq-V10} and  successively \eqref{eqn:asymptau}, \eqref{Eq-gamma} with $(\beta = 1-\frac{q}{p})$ and \eqref{eqn:RK-Diag1n} we give
\begin{equation*}\label{CJ2}
\begin{split}
\,\left(\|K_z\|^{2s}_{A^2(\om)}\psi_\om(z)|g'(z)|\right)^q&\lesssim\frac{\|K_z\|^{2qs}_{A^2(\om)}}{\tau(z)^{2}\omega(z)^{1-\frac{q}{p}}}\int_{D_{\delta}(z)}\frac{|g'(\xi)|^q}{(1+\varphi'(\xi))^{q}}\,\omega(\xi)^{1-\frac{q}{p}} \,dA(\xi)\\
&\lesssim\frac{1}{\tau(z)^{2q/p}}\int_{D_{\delta}(z)}\frac{|g'(\xi)|^q}{(1+\varphi'(\xi))^{q}}\,\omega(\xi)^{1-\frac{q}{p}} \,dA(\xi)\\
&\lesssim\int_{D_{\delta}(z)}|k_{p,z}(\xi)|^q\frac{|g'(\xi)|^q}{(1+\varphi'(\xi))^{q}}\,\omega(\xi)\, dA(\xi)\\
&\lesssim M^{id}_{0,p,q}(g')(z)< \infty.
\end{split}
\end{equation*}
Thus,  in order  to prove that  the function $g'$ is vanish on $\D,$ it is enough to see that $\|K_z\|^{2s}_{A^2(\om)}\psi_\om(z)$ goes to infinity as $|z|\to 1.$ Using \eqref{Eq-NE} and \eqref{jcn}, we get 
$$
\|K_z\|^{2s}_{A^2(\om)}\,\psi_\om(z)\asymp \frac{\tau(z)^{2(1-s)}}{(1-|z|)^{t}\,\omega(z)^{s}}.
$$
It remains to note that $\lim_{\substack{|z|\to 1}} \|K_z\|^{2s}_{A^2(\om)}\,\psi_\om(z)=\infty$ (see Lemma 2.3 in \cite{PP1}). 
\end{proof}

\section{Schatten class membership}\label{Section 4}
\begin{proof}[Proof of Theorem 1.2]
Notice first that, for any $f,h \in A_{\omega}^{2},$
\begin{equation}\label{define inner of CC*}
    \begin{split}
        \langle\left(C_{g}^{\psi}\right)^{\ast}(C_{g}^{\psi})f, h\rangle_\omega &= \langle C_{g}^{\psi} f,C_{g}^{\psi}h\rangle_\omega=\int_{\mathbb{D}}\left(C_{g}^{\psi}f(z)\right)^{\prime} \overline{\left(C_{g}^{\psi} h(z)\right)^\prime}\,\,\frac{\omega(z)}{(1+\varphi^\prime(z))^2}\,dA(z)\\
        &=\int_{\mathbb{D}}f(\psi(z))g(\psi(z))\psi^\prime(z)\,\overline{h(\psi(z))g(\psi(z))\psi^\prime(z)}\,\,\frac{\omega(z)}{(1+\varphi^\prime(z))^2}\,dA(z)\\
&=\int_{\mathbb{D}}f(z)\,\overline{h(z)}\,d\mu_{\psi,2}(z)=\int_{\mathbb{D}}f(z)\,\overline{h(z)}\,\,\omega(z)\,d\nu_{\psi,2}(z),
    \end{split}
\end{equation}
where $d\mu_{\psi,2}=\omega(z)\,\,dm_{\psi,2}(z).$ Denote by $T_{m_{\psi,2}}$ the Toeplitz operator with a positive measure $m_{\psi,2}$ defined by
$$
T_{m_{\psi,2}}=\int_\mathbb{D}\,f(\xi)\,\overline{K_z(\xi)}\,\omega(\xi)\,dm_{\psi,2}(\xi), \qquad \text{for}\,\,f\in A_{\omega}^2.
$$
Applying Fubini’s theorem and the reproducing kernel formula, we have
\begin{equation}\label{ eq T }
    \begin{split}
        \langle T_{m_{\psi,2}}\,f, h\rangle_\omega &= \int_{\mathbb{D}}\left(\int_{\mathbb{D}}f(\xi)\,K_{\xi}(z)\,\omega(\xi)\,dm_{\psi,2}(\xi)\right)\,\overline{h(z)}\, \omega(z) \,dA(z)\\
        &=\int_{\mathbb{D}}f(\xi)\,\overline{\langle h,K_{\xi}\rangle_{\omega}}\,\omega(\xi)\, dm_{\psi,2}(\xi)=\int_{\mathbb{D}}f(\xi)\,\overline{h(\xi)} \,\omega(\xi)\,dm_{\psi,2}(\xi),
    \end{split}
\end{equation}
for any $f, h \in A_{\omega}^2.$ Combining this with \eqref{define inner of CC*}, we get
$$\langle \left(C_{g}^{\psi}\right)^\ast\,\left(C_{g}^{\psi}\right)f, h\rangle_\omega = \langle \,T_{m_{\psi,2}}\,f,h\rangle_\omega, \qquad \text{ for every} \,\, f,h \in A_{\omega}^2.$$
Thus, $\left(C_{g}^{\psi}\right)^\ast\,\left(C_{g}^{\psi}\right)= \,T_{m_{\psi,2}}.$
Hence,  $C_{g}^{\psi}$ belongs to $\mathcal{S}_p(A_\omega ^2)$ if and only if $\,T_{m_{\psi,2}} $ is in $\mathcal{S}_{p/2}(A_{\omega}^2),$ which is equivalent to $\widehat{m_{\psi,2}} \in L^{p/2}(\mathbb{D}, d\lambda), $ by Theorem 4.6 in \cite{APJ}. This is also equivalent to that $G_{t}(m_{\psi,2})$  belongs to $L^{p/2}(\mathbb{D},d\lambda)$ for $t>0,$ by Lemma 7.1 in \cite{Hi-w}. Since
\begin{align*}\label{eq G0,2}
   G_{2}(m_{\psi,2})(z)&= \int_{\mathbb{D}} |k_{2,z}(\xi)|^2 \,\omega(\xi)\, dm_{\psi,2}(\xi)=\int_{\mathbb{D}} |k_{2,z}(\xi)|^2 \omega(\xi)\, \omega(\xi)^{-1}\, d\mu_{\psi,2}(\xi)\\
   &=\int_{\mathbb{D}} |k_{2,z}(\xi)|^2 \,d\mu_{\psi,2}(\xi)=\int_{\mathbb{D}} |k_{2,z}(\psi(\xi))|^2 \,|g(\psi(\xi))|^2\,|\psi^\prime(\xi)|^2\,\frac{\omega(\xi)}{(1+\varphi^\prime(\xi))^2}\,dA(\xi) = M_{0,2,2}^{\psi}(g)(z)
\end{align*}
for $z\in\D$, which completes the first statement of Theorem \ref{Theorem 1.2}. 

To prove that $GV (\psi,g )$ belongs to the Schatten $p$-class $S_p(A_\omega^2)$, it suffices to follow the same arguments used in the preceding part of the proof. We omit the details and leave them to the interested reader.
\end{proof}

In the following proposition, when $\psi=id$, we prove that our necessary and sufficient condition for Schatten class membership is equivalent to Theorem 3 of \cite{PP1} given by Pau and Pel\`{a}ez.

\begin{proposition}\label{Jor-PelC1} let  $\om \in \mathcal{W}$  and $g$ is an analytic function on $\D.$  
\begin{enumerate} 
\item[(I)]  Let  $ 1<p<\infty$, the following conditions are equivalent:
 \begin{enumerate} 
\item[(a)]  $M^{id}_{0,2,2}(g') \in L^{p/2}(\D,d\lambda).$
\item[(b)] $\psi_\om(z)|g'(z)|\in L^{p}(\D,d\lambda),$ 
\end{enumerate}
where $d\lambda(z)= dA(z)/\tau^2(z).$
\item[(II)]
 For  $0< p\leq 1,$  the following statements are equivalent:
\begin{enumerate} 
\item[(c)]  $M^{id}_{0,2,2}(g') \in L^{p/2}(\D,d\lambda).$
\item[(d)]  The function $g$ is constant.
\end{enumerate}
\end{enumerate}
\end{proposition}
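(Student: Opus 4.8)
The plan is to recognize $M^{id}_{0,2,2}(g')$ as a Berezin-type transform and transfer everything to the averaging function, exactly as in the proof of Theorem \ref{Theorem 1.2}. Setting $d\mu=\frac{|g'|^2}{(1+\varphi')^2}\,dA$, one checks directly that $M^{id}_{0,2,2}(g')=G_2(\mu)$, so by Lemma 7.1 of \cite{Hi-w} we have $M^{id}_{0,2,2}(g')\in L^{p/2}(\D,d\lambda)$ if and only if $\widehat{\mu_\delta}\in L^{p/2}(\D,d\lambda)$. Since $(1+\varphi'(\xi))\asymp(1+\varphi'(z))$ for $\xi\in D_\delta(z)$ and $\psi_\om\asymp(1+\varphi')^{-1}$ by \eqref{eq-V10}, the averaging function satisfies $\widehat{\mu_\delta}(z)\asymp \psi_\om(z)^2\,\tau(z)^{-2}\int_{D_\delta(z)}|g'|^2\,dA$. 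Thus both parts reduce to comparing $\widehat{\mu_\delta}$ with the pointwise quantity $u:=(\psi_\om|g'|)^2$ in $L^{p/2}(d\lambda)$.

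For Part (I) I would prove the $L^{p/2}(d\lambda)$-equivalence $\widehat{\mu_\delta}\asymp u$. The lower bound $\widehat{\mu_\delta}(z)\gtrsim u(z)$ holds pointwise: by subharmonicity of $|g'|^2$ and Lemma \ref{lem:subHarmP}, $|g'(z)|^2\lesssim\tau(z)^{-2}\int_{D_\delta(z)}|g'|^2\,dA$, which gives (a)$\Rightarrow$(b). For the converse I would fix a $(\delta,\tau)$-lattice $\{z_n\}$ from Lemma \ref{lem:Lcoverings} and estimate $\int_\D\widehat{\mu_\delta}^{p/2}\,d\lambda\lesssim\sum_n\widehat{\mu_{3\delta}}(z_n)^{p/2}$ using the finite-overlap covering and $\lambda(D_\delta(z_n))\asymp1$. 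When $p\ge2$ (so $p/2\ge1$) Jensen's inequality applied to the average $\widehat{\mu_{3\delta}}(z_n)$ together with Fubini returns $\lesssim\int_\D u^{p/2}\,d\lambda$; when $1<p<2$ (so $p/2<1$) Jensen fails, and I would instead invoke the reverse-H\"older inequality for the subharmonic function $|g'|^2$, namely $\big(\tau^{-2}\int_{D_{3\delta}}|g'|^2\,dA\big)^{p/2}\lesssim\tau^{-2}\int_{D_{6\delta}}|g'|^p\,dA$, before summing. Either way one obtains $\|\widehat{\mu_\delta}\|_{L^{p/2}(d\lambda)}\lesssim\|u\|_{L^{p/2}(d\lambda)}$, proving (b)$\Rightarrow$(a).

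For Part (II) only the easy direction is needed. The pointwise bound $M^{id}_{0,2,2}(g')(z)\gtrsim u(z)$ — obtained by restricting the defining integral to $D_\delta(z)$ and applying \eqref{eqn:RK-Diag1} together with subharmonicity — shows that (c) forces $\psi_\om|g'|\in L^p(\D,d\lambda)$. I would then argue by contradiction: if $g'\not\equiv0$, the circular means $I(r)=\int_0^{2\pi}|g'(re^{i\theta})|^p\,d\theta$ are nondecreasing in $r$ (subharmonicity of $|g'|^p$) and hence bounded below by some $c>0$ for $r\ge r_0$. Passing to polar coordinates and using that $\psi_\om,\tau$ are radial gives
\begin{equation*}
\int_\D(\psi_\om|g'|)^p\,d\lambda\gtrsim\int_{r_0}^1\frac{\psi_\om(r)^p}{\tau(r)^2}\,dr\asymp\int_{r_0}^1\frac{\Delta\varphi(r)}{(1+\varphi'(r))^p}\,dr.
\end{equation*}
Since $p\le1$ and $1+\varphi'\ge1$, the integrand dominates $\frac{\Delta\varphi}{1+\varphi'}\ge\frac{\varphi''}{1+\varphi'}=\big(\log(1+\varphi')\big)'$, whose integral equals $\log(1+\varphi'(r))\big|_{r_0}^{1^-}=+\infty$ because $\varphi'\to\infty$. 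This contradicts $\psi_\om|g'|\in L^p(d\lambda)$, so $g'\equiv0$ and $g$ is constant; the implication (d)$\Rightarrow$(c) is trivial.

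The main obstacle is the reverse estimate $\|\widehat{\mu_\delta}\|_{L^{p/2}(d\lambda)}\lesssim\|u\|_{L^{p/2}(d\lambda)}$ in the range $1<p<2$, where $p/2<1$ and the naive Jensen bound is unavailable; this is where the reverse-H\"older inequality for subharmonic functions (equivalently, the Carleson-measure equivalences of Section \ref{Section 2}) does the work. A secondary point requiring care is verifying that $\int_{r_0}^1\Delta\varphi\,(1+\varphi')^{-p}\,dr$ diverges for every weight in $\mathcal{W}$, which I reduce to the monotonicity $\varphi'\to\infty$ and the elementary logarithmic bound above.
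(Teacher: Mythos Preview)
Your proposal is correct and takes a different route from the paper in both parts.

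For Part (I), both you and the paper obtain (a)$\Rightarrow$(b) from the pointwise bound $(\psi_\omega|g'|)^2\lesssim M^{id}_{0,2,2}(g')$ via \eqref{Eq-gamma} and \eqref{eqn:RK-Diag1}. For (b)$\Rightarrow$(a) the paper only records the sup-norm bound $M^{id}_{0,2,2}(g')(z)\lesssim\sup_\xi(\psi_\omega(\xi)|g'(\xi)|)^2\cdot\|k_{2,z}\|_{A^2_\omega}^2$, which by itself does not transfer $L^p(d\lambda)$-membership of $\psi_\omega|g'|$ to $L^{p/2}(d\lambda)$-membership of $M^{id}_{0,2,2}(g')$ (recall $\lambda(\D)=\infty$). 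Your detour through Lemma~7.1 of \cite{Hi-w} (the equivalence $G_2(\mu)\in L^{p/2}(d\lambda)\iff\widehat{\mu_\delta}\in L^{p/2}(d\lambda)$) followed by the lattice/reverse-H\"older argument genuinely closes this direction; the reverse-H\"older you invoke for $1<p<2$ is just the standard ``sup on a small disk $\lesssim$ $L^p$-average on a larger disk'' estimate for holomorphic functions, so no extra machinery is needed.

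For Part (II), the paper invokes the auxiliary hypothesis \eqref{jcn}, $\Delta\varphi\asymp((1-|z|)^t\psi_\omega)^{-1}$ with $t\ge1$, inherited from Theorem~\ref{Jor-PelC} but absent from the statement of the proposition, and then reduces to divergence of $\int(1-r)^{-s}\,dr$. Your argument via $\frac{\Delta\varphi}{(1+\varphi')^p}\ge\frac{\varphi''}{1+\varphi'}=(\log(1+\varphi'))'$ (using $\varphi'\ge0$ since $\omega$ is radially decreasing, and $(1+\varphi')^p\le1+\varphi'$ for $p\le1$) together with $\varphi'(r)\to\infty$ avoids this extra assumption entirely and works for every $\omega\in\mathcal{W}$; the claim $\varphi'(r)\to\infty$ is indeed forced by $\Delta\varphi=\varphi''+\varphi'/r\to\infty$ combined with $\varphi'\ge0$. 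This is a strictly more general argument than the paper's.
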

\begin{proof}
First, we prove that $(a)$ implies $(b)$,that is,  assume that $\psi_\omega(z)\,|g^\prime(z)|\in L^{p}(\mathbb{D},d\lambda)$  and we need to prove that $M^{id}_{0,p,p}(g^\prime) \in L^{p/2}(\mathbb{D},d\lambda)$. By using \eqref{eq-V10}, we obtain
\begin{equation}\label{CJ3}
\begin{split}
M^{id}_{0,2,2}(g')(z)&=\int_{\D} |k_{p,z}(\xi)|^2\,\frac{|g'(\xi)|^2}{(1+\varphi'(\xi))^{2}}\,\,\omega(\xi)\, dA(\xi)\\
&\lesssim\sup_{\xi\in \D}\left(\psi_\om(\xi)\,|g'(\xi)|\right)^2 \,\left(  \int_{\D} |k_{p,z}(\xi)|^2\,\omega(\xi)\, dA(\xi)\right)\\
&=\sup_{\xi\in \D}\left(\psi_\om(\xi)\,|g'(\xi)|\right)^2\, \|k_{p,z}\|^2_{A^2_\om}=\sup_{\xi\in \D}\left(\psi_\om(\xi)\,|g'(\xi)|\right)^2.
\end{split}
\end{equation}

Conversely, suppose that  $M^{id}_{0,2,2}(g')$ belongs to  $L^{p/2}(\D,d\lambda)$, then by using again  \eqref{eq-V10} and  respectively \eqref{Eq-gamma}, \eqref{eqn:asymptau} and  \eqref{eqn:RK-Diag1}, we get 
\begin{align*}
\left(\psi_\om(z)\,|g'(z)|\right)^2&\asymp\frac{|g'(z)|^2}{(1+\varphi'(z))^{2}}\lesssim\frac{1}{\tau(z)^2}\int_{D_{\delta}(z)}\frac{|g'(\xi)|^2}{(1+\varphi'(\xi))^{2}}\,dA(\xi)
\lesssim\int_{D_{\delta}(z)}|k_{p,z}(\xi)|^2\frac{|g'(\xi)|^2}{(1+\varphi'(\xi))^{2}}\,\omega(\xi)\, dA(\xi)\\
&\lesssim\int_{\D}|k_{p,z}(\xi)|^2\frac{|g'(\xi)|^2}{(1+\varphi'(\xi))^{2}}\,\omega(\xi)\, dA(\xi)= M^{id}_{0,2,2}(g')(z),
\end{align*}
This together with \eqref{CJ3} shows that $M^{id}_{0,2,2}(g') \in L^{p/2}(\D,d\lambda)$ if and only if $\psi_\om(z)\,|g'(z)|\in L^{p}(\D,d\lambda)$,
which finishes the proof of (I).

Next, we prove (II). It is clear that $(d)$ implies $(c).$ Now, we assume that $(c)$ is true and we prove that the function $g$ is  constant. Suppose that  $M^{id}_{0,2,2}(g') \in L^{p/2}(\D,d\lambda).$  The application of  \eqref{def tau},  \eqref{jcn} and  the fact that $\Delta\varphi(z)\asymp \tau(z)^{-2}$   imply that 
\begin{equation*}
\begin{split}
    \int_\mathbb{D} \frac{|g^\prime(z)|^p}{(1-|z|)^{tp}\,(1-|z|)^{2(1-p)}}\, dA(z)&\lesssim \int_\mathbb{D} \frac{|g^\prime(z)|^p}{(1-|z|)^{tp}\,\tau(z)^{2(1-p)}}\,dA(z)\asymp \int_\mathbb{D}\frac{|g^\prime(z)|^p \,\Delta\varphi(z)^{1-p}}{(1-|z|)^{tp}}\,dA(z)\\
&\asymp\int_\mathbb{D}\frac{|g^\prime(z)|^p \,\Delta\varphi(z)}{(1-|z|)^{tp}\,(1-|z|)^{-tp}\,\psi_\omega(z)^{-p}}\,dA(z)\\
&\asymp\int_\mathbb{D} |g^\prime(z)|^p\, \psi_\omega(z)^p \,\Delta\varphi(z)\,dA(z) \asymp \int_\mathbb{D}|g^\prime(z)|^p \,\psi_\omega(z)^p d\lambda(z).
\end{split}  
\end{equation*}
Using our assumption and the fact that $M^{id}_{0,2,2}(g') \in L^{p/2}(\D,d\lambda)$ is equivalent to $\psi_\om(z)|g'(z)|\in L^{p}(\D,d\lambda)$ for all $0<p<\infty$, we have 
\begin{equation*}
\begin{split}
    \int_\mathbb{D} \frac{|g^\prime(z)|^p}{(1-|z|)^{tp}\,(1-|z|)^{2(1-p)}}\, dA(z)
& \lesssim \int_\mathbb{D}|g^\prime(z)|^p \,\psi_\omega(z)^p d\lambda(z) \asymp \int_\mathbb{D}|M_{0,2,2}^{id}(g^\prime)(z)|^{p/2} d\lambda(z) <\infty.
\end{split}  
\end{equation*}
Therefore, it follows $(t-2)p+2\geq 1,$ and consequently $g^\prime \equiv 0,$ which completes the proof.
\end{proof}

\bibliographystyle{alpha}

\end{document}